\documentclass[12pt]{amsart}%%[12]

\usepackage{titletoc}
\usepackage{diagbox}
\usepackage{imakeidx}
\makeindex
\usepackage{pdfpages}
\usepackage{tikz}
\usepackage[colorlinks=true, linkcolor=blue, anchorcolor=blue, citecolor=blue, filecolor=blue, menucolor= blue, urlcolor=blue,pdfencoding=auto, psdextra]{hyperref}
\usepackage{bookmark}% faster updated bookmarks
\usepackage{amsmath,amssymb,amsthm,amscd}
\usepackage{verbatim}
\usepackage{comment}
\usepackage{multirow}
\usepackage{mathtools}
\usepackage{enumitem} 
\usepackage{pgf,tikz,mathrsfs}
\usetikzlibrary{arrows}
\usepackage[normalem]{ulem}
\usepackage{multicol}
\usepackage{xfrac,xcolor}
\usepackage[margin=1in]{geometry}
\usepackage{graphicx}
\numberwithin{equation}{section}
\usepackage{pstricks-add,pst-plot,pst-func}
\usepackage{cleveref}
\newtheorem{theorem}{Theorem}[section]
\newtheorem{proposition}[theorem]{Proposition}
\newtheorem{lemma}[theorem]{Lemma}
\newtheorem{corollary}[theorem]{Corollary}

\newtheorem{prob*}{Problem}
\newtheorem{question}[theorem]{Question}

\newtheorem{conjecture}[theorem]{Conjecture}
\newtheorem*{theorem*}{Theorem}

\theoremstyle{definition}
\newtheorem{definition}[theorem]{Definition}

\newtheorem{example}[theorem]{Example}
\newtheorem{remark}[theorem]{Remark}

\numberwithin{equation}{section}

\definecolor{green}{rgb}{.1,.75,.1}

\usepackage{xparse}
\usepackage{tikz}
\usetikzlibrary{matrix,backgrounds}
\pgfdeclarelayer{myback}
\pgfsetlayers{myback,background,main}

\tikzset{mycolor/.style = {line width=1bp,color=#1}}%
\tikzset{myfillcolor/.style = {draw,fill=#1}}%

\NewDocumentCommand{\highlight}{O{blue!40} m m}{%
	\draw[mycolor=#1] (#2.north west)rectangle (#3.south east);
}

\NewDocumentCommand{\fhighlight}{O{blue!40} m m}{%
	\draw[myfillcolor=#1] (#2.north west)rectangle (#3.south east);
}

\newcommand{\ZZ}{ \ensuremath{\mathbb{Z}}}

\newcommand{\PP}{ \ensuremath{\mathbb{P}}}

\renewcommand{\labelenumi}{\arabic{enumi}.  }
\renewcommand{\labelenumii}{(\alph{enumii})}

\definecolor{MyDarkGreen}{cmyk}{0.7,0,1,0}

\def\cocoa{{\hbox{\rm C\kern-.13em o\kern-.07em C\kern-.13em o\kern-.15em A}}}

\usepackage{etoolbox}
\patchcmd{\subsection}{-.5em}{.5em}{}{}
\patchcmd{\subsection}{2}{3}{}{}
\makeatletter
\newsavebox\myboxA
\newsavebox\myboxB
\newlength\mylenA

\newcommand*\xoverline[2][0.75]{%
    \sbox{\myboxA}{$\m@th#2$}%
    \setbox\myboxB\null% Phantom box
    \ht\myboxB=\ht\myboxA%
    \dp\myboxB=\dp\myboxA%
    \wd\myboxB=#1\wd\myboxA% Scale phantom
    \sbox\myboxB{$\m@th\overline{\copy\myboxB}$}%  Overlined phantom
    \setlength\mylenA{\the\wd\myboxA}%   calc width diff
    \addtolength\mylenA{-\the\wd\myboxB}%
    \ifdim\wd\myboxB<\wd\myboxA%
       \rlap{\hskip 0.5\mylenA\usebox\myboxB}{\usebox\myboxA}%
    \else
        \hskip -0.5\mylenA\rlap{\usebox\myboxA}{\hskip 0.5\mylenA\usebox\myboxB}%
    \fi}
\makeatother
\begin{document}

\begin{abstract} Let $C\subset \mathbb P^3$ be a curve over an algebraically closed field of characteristic zero, and let $M(C)$ denote its Hartshorne-Rao module. We study how the geometry of $C$ influences whether $M(C)$ satisfies the Weak and Strong Lefschetz Properties. We first consider unions of general skew lines and prove that multiplication by $L^i$, for a general linear form $L$, has maximal rank on $M(C)$ for $i=1,2,3$. The proof uses a specialization to zero-dimensional schemes that can be written as a union of curvilinear schemes, each of a particular type and of degree at most three, together with generic Hilbert function results for such schemes, which are of independent interest. We then examine how special geometric configurations can affect the Weak Lefschetz Property. In particular, we show that curves on a smooth quadric surface have Hartshorne-Rao modules with the Weak Lefschetz Property, and that the property persists for unions of skew lines with all but one line on a quadric. By contrast, for $r\geq 10$, we construct configurations of $r$ skew lines with all but two lines on a quadric whose Hartshorne-Rao modules fail the Weak Lefschetz Property. Finally, we study smooth irreducible curves. We prove the Weak Lefschetz Property in several low-degree cases, construct a degree 15 curve for which it fails, and show that general nondegenerate rational curves have Hartshorne-Rao modules with the Weak Lefschetz Property. These results illustrate both the strength and the limitations of geometric hypotheses in controlling Lefschetz properties of Hartshorne-Rao modules.
\end{abstract}

%\frontmatter
\title[Lefschetz Properties for Hartshorne-Rao Modules of Curves in $\mathbb P^3$]{Weak and Strong Lefschetz Properties for Hartshorne-Rao Modules of Curves in $\mathbb P^3$}

\author[J.~Migliore]{Juan Migliore} 
\address[J.~Migliore]{Department of Mathematics,
University of Notre Dame,
Notre Dame, IN 46556, USA}
\email{migliore.1@nd.edu} 

\author[U. Nagel]{Uwe Nagel}
\address[U. Nagel]{Department of Mathematics, University of Kentucky, 715 Patterson Office Tower, Lexington,
KY 40506,  USA}
\email{uwe.nagel@uky.edu}

\author[C. Peterson]{Chris Peterson}
\address[C. Peterson]{Department of Mathematics, Colorado State University, Fort Collins, CO 80523, USA}
\email{christopher2.peterson@colostate.edu}

\author[E.T. Turatti]{Ettore Teixeira Turatti}
\address[E.T. Turatti]{Faculty of Mathematics, Informatics, and Mechanics, University of Warsaw, Banacha 2, 02-097 Warsaw, Poland}
\email{e.teixeira-turatti@uw.edu.pl}

\maketitle

\tableofcontents

\section{Introduction}

Let $\mathbb K$ be an algebraically closed field of characteristic zero. Let $R = \mathbb K[x_0,\dots,x_n]$ be a polynomial ring with its standard grading. By a curve we mean a locally Cohen-Macaulay one-dimensional subscheme of $\mathbb P^n$, equivalently a one-dimensional subscheme with no embedded or isolated points. For such a curve $C \subset \PP^n$ we define its Hartshorne-Rao module, $M(C)$, by
\[
M(C) = \bigoplus_{t \in \ZZ} H^1(\mathbb P^n,\mathcal I_C(t)).
\]
Since $C$ is an equidimensional one-dimensional subscheme of $\PP^n$, its Hartshorne-Rao module is a graded $R$-module of finite length (i.e. its dimension as a $\mathbb K$-vector space is finite).

Let $L \in R$ be a general linear form. Recall that a graded $R$-module $M$ of finite length has the Weak Lefschetz Property (WLP) if the multiplication map $\times L \colon [M]_{t-1} \rightarrow [M]_t$ has maximal rank for every integer $t$. It has the Strong Lefschetz Property (SLP) if the multiplication map $\times L^m \colon [M]_{t-m} \rightarrow [M]_t$ has maximal rank for all integers $t$ and all positive integers $m$. Note that $[M]_t$ denotes the degree $t$ component of the graded $R$-module $M$ and is a $\mathbb K$-vector space. By a slight abuse of language, we will often say $M$ has WLP (resp. SLP) rather than $M$ has {\it the WLP} (resp. {\it the} SLP). {Finally, we say that $M$ has SLP {\it in range $i$} if $\times L^i \colon [M]_{t-i} \rightarrow [M]_t$ has maximal rank for all integers $t$.} 

The goal of this paper is to study how the geometry of the curve $C$ influences whether either Lefschetz property holds for the  module $M(C)$. At first glance, one might guess that if $C$ is smooth and irreducible then $M(C)$ has WLP (for example, one might try to use the  fact that the general hyperplane section of $C$ has the Uniform Position Property \cite{harris} if $C$ is nondegenerate). However, this approach will not work.  It was shown in \cite{rao} that up to shift, any graded module of finite length is the Hartshorne-Rao module of some smooth curve. For instance, choose a nonzero finite-length graded module $M$ such that multiplication by every linear form is zero between two consecutive nonzero pieces. Then no general linear form can act with maximal rank in that degree, so $M$ fails WLP. Rao's theorem then gives a smooth curve $C$ whose Hartshorne-Rao module is, up to twist, $M$.
As a consequence, the smoothness of $C$ alone is not enough to draw any conclusions about WLP for the Hartshorne-Rao module $M(C)$, and indeed, we will exhibit in \Cref{11buchs} an example of a smooth connected curve for which WLP fails. In this paper, we will explore how additional information about the curve $C$ can force or preclude WLP for $M(C)$.

We note that if $C$ is a curve for which $M(C)$ has WLP or SLP, then the same is true for $M(C^\prime)$, where $C^\prime$ is any curve in the liaison class of $C$. This is clear for the even liaison class of $C$, since each of WLP and SLP for a module $M$ is invariant up to shift. However, it is also true for curves linked to $C$ in an odd number of steps since WLP and SLP are preserved when passing to the $\mathbb K$-dual of $M(C)$. See \cite{rao} for the relevant facts about the connections between $M(C)$ and the liaison class of $C$.

In this work, we focus first on configurations of lines, where the geometry can be varied in a controlled way, and then on smooth irreducible curves of low degree. We now summarize our main results.

In \Cref{sec: general lines}, we show that if $C\subset \mathbb P^3$ is a union of general lines then $M(C)$ has maximal rank for multiplication by $L^i$ for $i=1,2,3$; see \Cref{Prop:GenlWLP}, \Cref{SLP2}, and \Cref{SLP3}.  The proof proceeds by relating the multiplication maps to the Hilbert functions of zero-dimensional schemes that are unions of special nonreduced curvilinear schemes (which we refer to as flat fat points). A key result is \Cref{GenFlatFat}, which says that a general finite set of flat fat points, each with multiplicity $m \le 3$, has generic Hilbert function as a subscheme of a degree-$m$ hypersurface. Such sets arise in our argument when studying the Hilbert function of the intersection of a general finite set of lines in $\PP^3$ with $H^m$, the $m$-fold thickening of a plane, where $H^m$ is defined by $L^m$ with $L$ a general linear form. We expect that analogous genericity results are true for general unions of flat fat points of any fixed multiplicity $m \ge 4$. 

{After \Cref{sec: general lines}, we focus on WLP. We are interested in how specializing some of these lines may interfere with WLP for $M(C)$.  We consider, in \Cref{sec4}, configurations of skew lines in which many of the lines are constrained to lie on a quadric surface. We show that, in the extreme case, WLP holds when all lines lie on a quadric; see \Cref{prop:lines quadric}. We conclude that the Hartshorne-Rao module of any space curve on a quadric surface has WLP; see \Cref{curve on quadric}. Thus, it is natural to ask whether WLP also holds in less extreme cases. We show in \Cref{all but one} that when all but one of the lines lie on a quadric surface, then $M(C)$ continues to have WLP, but in Proposition \ref{all but 2}  we show that for $r \geq 10$ skew lines, if all but two lie on a quadric surface then $M(C)$ fails to have WLP.

In \Cref{sec5}, we consider the case of smooth irreducible curves. We show that $M(C)$ has WLP if $\deg(C)< 10$; see \Cref{at least 10}. We also exhibit an example of a smooth degree $15$ curve for which $M(C)$ fails WLP; see Proposition \ref{11buchs}. We further show that for every smooth curve $C$  of degree at most $14$, $M(C)$ has WLP, apart from three cases that remain open; see \Cref{< 15}. We close this section by showing that the Hartshorne-Rao module of a general nondegenerate rational curve of degree $d$ has WLP; see \Cref{prop:rational curve}. 
}

Some of the results in \Cref{sec5} rely on properties of finite sets of points with the uniform position property (UPP). These are well-known results that have been frequently used in the literature, but we are not aware of explicit proofs. We provide arguments in \Cref{sec:prelim} (\Cref{lem:irredinit} and \Cref{UPP cor}). 

\

\noindent {\bf Acknowledgments:} This work was begun during the Workshop on Weak and Strong Lefschetz Properties Across Mathematics, which  took place in Nordfjordeid, Norway on June 2--6, 2025; the  authors are grateful for the kindness and support of the Sophus Lie Center. We also thank Pietro De Poi, {\L}ucja Farnik, Brian Harbourne, Giovanna Ilardi, Jake Kettinger, Tomasz Szemberg and Justyna Szpond for their participation and contributions to the group where in which this project began.
Migliore was partially supported by Simons Foundation grant \#839618. Nagel was partially supported by Simons Foundation grant \#636513. Peterson was partially supported by NSF \#2428052. Turatti acknowledges support by the National Science Center, Poland, under the project “Tensor rank and its applications to signature tensors of paths”, 2023/51/D/ST1/02363.

%%%%%%%%%%%%%%%%%%%%%%%%%%%%%%%%%%%%%%%%%%%%%%%%%%%%%%%%%%%%%%%%%%%%%%%

\section{Useful exact sequences and properties of points in uniform position}
\label{sec:prelim}

In this section, we introduce notation and review the concepts and facts needed later. 

We will soon specialize the kind of curve we study, but for now denote by $C$ a curve that is an equidimensional, one-dimensional subscheme of $\PP^n$. Some of the sequences that follow were used in \cite{GeomInv} to address questions related to several of those considered in this paper. Our first exact sequence is an exact sequence of vector spaces; it is quite standard and will be used frequently.

\begin{equation} \label{usual 1}
0 \rightarrow [I_C]_t \rightarrow [R]_t \rightarrow H^0(\mathbb P^n, \mathcal O_C(t)) \rightarrow [M(C)]_t \rightarrow 0.
\end{equation}

Let $n=3$ and let $L \in [R]_1$ be a general linear form, defining a hyperplane $H$ in $\PP^3$. Let $m \geq 1$ be a positive integer. We want to study the effect on $M(C)$ of multiplication by $L^m$. As preparation for the next exact sequence, we introduce some notation.

Let $H^m$ denote the nonreduced surface defined by $L^m$ and let $Z_m$ be the subscheme of $\PP^3$ defined by $C \cap H^m$. It is defined by the saturation of the ideal $I_C + (L^m)$. Clearly $Z_m$ is a subscheme of $\PP^3$ and it is also a subscheme of $H^m$. We denote by $I_{Z_m}$ the homogeneous ideal of $Z_m$ in $R$ and by $I_{Z_m | H^m}$ the homogeneous ideal of $Z_m$ in $R/(L^m)$. These are related by the following exact sequence of modules
\begin{equation} \label{usual 2}
    0 \rightarrow R(-m) \rightarrow I_{Z_m} \rightarrow I_{Z_m | H^m} \rightarrow 0.
\end{equation}

If we sheafify, we get the following exact sequence
\begin{equation}
    0 \rightarrow \mathcal O_{\mathbb P^3}(-m) \rightarrow \mathcal I_{Z_m} \rightarrow \mathcal I_{Z_m| H^m} \rightarrow 0.
\end{equation}

Noting that $H^1(\mathcal O_{\mathbb P^3}(-m+t))=H^2(\mathcal O_{\mathbb P^3}(-m+t))=0$ for all $t$, from the associated long exact sequence in cohomology we can conclude that $H^1(\mathcal I_{Z_m}(t)) = H^1(\mathcal I_{Z_m|H^m}(t))$ for all $t$.

{Let $X\subset \PP^n$ be a scheme and let $H$ be a degree $d$ hypersurface. We have the Castelnuovo exact sequence \begin{equation}\label{eq:castelnuovo}
0\to \mathcal I_{X:H,\PP^n}(-d)\to \mathcal I_{X,\PP^n}\to \mathcal I_{X\cap H,H}\to 0,
\end{equation}
where $X:H$ is the scheme defined by the ideal $I_X:I_H$. If we denote $I_H=(F)$, at the level of global sections, the first map is multiplication by $F$.}

{Therefore, if $C \subset\PP^n$ is a curve, $H=V(L)$, where $L$ is a general linear form, and we consider the scheme $H^m$ defined by $L^m$, twisting by $\mathcal O_{\PP^n}(t)$ and taking the long exact sequence in cohomology associated to \eqref{eq:castelnuovo} yields:}

\begin{equation} \label{usual 3}
0 \rightarrow [I_C]_{t-m} \stackrel{\times L^m}{\longrightarrow} [I_C]_t \rightarrow [I_{Z_m | H^m}]_t \rightarrow [M(C)]_{t-m} \stackrel{\times L^m}{\longrightarrow} [M(C)]_t \rightarrow H^1(\mathcal I_{Z_m|H^m}(t)) \rightarrow \dots 
\end{equation}

The following definition is a slight extension of the original one, which assumed that $X$ and $Z$ are reduced.

\begin{definition}[\cite{GMR}]
    Let $X \subset \PP^n$ be a closed subscheme. Let $Z \subset X$ be a zero-dimensional scheme of degree $s$. Then $Z$ is in {\it generic position} on $X$ if its Hilbert function satisfies
    \[
    h_Z(t) = \min \{ h_X(t), s \}
    \]
    for all $t$.
\end{definition}

\begin{lemma} \label{h0h1=0}
    Let $X \subset \PP^3$ be a surface defined by a form $F$ of degree $d$. Let $Z \subset X$ be a zero-dimensional scheme in generic position on $X$. Then
    \[
    h^0(\mathcal I_{Z|X} (t)) \cdot h^1(\mathcal I_{Z|X} (t)) = 0.
    \]

    \begin{proof}
    Note that $I_X \cong R(-d)$ (in particular $X$ is arithmetically Cohen-Macaulay (ACM)).
As long as $h_Z(t) = h_X(t)$, we have $h^0(\mathcal I_{Z|X} (t)) = 0$, thanks to the exact sequence
\[
0 \rightarrow [R(-d)]_t \rightarrow [I_Z]_t \rightarrow [I_{Z|X}]_t \rightarrow 0.
\]
As soon as $h_Z(t) = s$, $h_Z$ has reached the multiplicity of $Z$, so $Z$ imposes independent conditions on forms of degree $t$ in $\PP^3$; hence $h^1(\mathcal I_Z(t)) = h^1(\mathcal I_{Z|X}(t)) = 0$ (from the same exact sequence above, sheafifying and taking cohomology).
    \end{proof}
\end{lemma}

For the next result, which we will need in \Cref{sec5}, we recall some notation. For a  finitely generated graded module $M$, we denote by $\alpha(M)$ the initial degree of $M$. Let $C$ be a curve with general hyperplane section $C \cap H = \Gamma$. Let $A$ be the coordinate ring of $\Gamma | H$ and let $\bar A$ be its general Artinian reduction. Let $a$ be the least degree of a socle element of $\bar A$ and let $b$ denote the least degree in which $I_{\Gamma|H}$ contains an element that does not lift to $I_C$.
Then \cite[Theorem 3.16]{HU} implies the following:  

\begin{theorem}[Huneke-Ulrich] \label{HU fact}
If $C$ is not ACM then $a \leq b$. That is, the least degree in which $I_{\Gamma|H}$  contains an element that does not lift to $I_C$ is greater than or equal to the least degree of a socle element of the general Artinian reduction of $A$.  
\end{theorem}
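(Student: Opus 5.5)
Since this is \cite[Theorem 3.16]{HU}, in the paper we would simply cite it. If one wanted a self-contained argument, the plan would be to compare the Artinian reduction $\bar A$ of $\Gamma$ with the corresponding quotient of the hyperplane-section ideal of $C$. Throughout, write $\bar R = R/(L)$ for the coordinate ring of $H$.

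\textbf{Step 1 (translate $b$ into module language).} Let $J \subseteq I_{\Gamma|H}$ be the image of $I_C$ in $\bar R$. By \eqref{usual 3} with $m=1$, the quotient $N = I_{\Gamma|H}/J$ is isomorphic to the kernel of $\times L\colon M(C)(-1)\to M(C)$. It is a finite-length graded module, since $J^{\mathrm{sat}} = I_{\Gamma|H}$. By definition, $b$ is the initial degree $\alpha(N)$. Because $C$ is not ACM we have $M(C)\neq 0$, and then a nonzero finite-length module always has a nonzero kernel for $\times L$ in its top degree. Hence $N\neq 0$, $b$ is finite, and $J_t = [I_{\Gamma|H}]_t$ for all $t<b$.

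\textbf{Step 2 (Artinian reductions).} Choose a general linear form $\ell\in\bar R_1$. It is a nonzerodivisor on $A=\bar R/I_{\Gamma|H}$, so $\bar A = \bar R/(I_{\Gamma|H}+\ell)$ has $h_{\bar A}(t)=\Delta h_\Gamma(t)$. Set $B=\bar R/J$, which is the hyperplane section of $R/I_C$. We have the exact sequence $0\to N\to B\to A\to 0$, and $\operatorname{depth} B = 0$ because $N = H^0_{\mathfrak m}(B)\neq 0$. Tensoring with $\bar R/\ell$ gives
\[
0 \to (0:_N \ell)(-1)\to (0:_B\ell)(-1) \to 0 \to N/\ell N \to B/\ell B \to \bar A \to 0 ,
\]
using that $\ell$ is regular on $A$. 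Consequently $B/\ell B$ and $\bar A$ agree in degrees $<b$, and they differ in degree $b$ by the nonzero space $[N/\ell N]_b = N_b$.

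\textbf{Step 3 (produce a socle element of $\bar A$ in degree $\le b$; the main obstacle).} Let $f\in [I_{\Gamma|H}]_b$ be a nonzero class in $N_b$. The goal is to show that the socle of $\bar A$ is nonzero in some degree $\le b$. The first thing to try is a contradiction argument: suppose $\operatorname{soc}(\bar A)_t=0$ for all $t\le b$. Then we would aim to lift $f$ to $I_C$ degree by degree. For this we would use the exact sequence \eqref{usual 3} together with the fact that the socle of $\bar A$ in degree $t$ measures exactly the obstruction to extending a degree-$t$ relation among generators of $I_{\Gamma|H}$ modulo $\ell$. Concretely, $\operatorname{Tor}_3^{\bar R}(\bar A,\mathbb K)_{t+2}\cong\operatorname{soc}(\bar A)_t$ by the Koszul complex. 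We would then compare minimal free resolutions of $B$ and $A$ via the mapping cone of $N\to B$. The expected outcome is that a minimal generator of $N$ in degree $b$ forces a last syzygy of $\bar A$ in degree $\le b+2$, that is, a socle element of degree $\le b$, which gives $a\le b$. I expect this bookkeeping to be the main difficulty. The issue is the possible cancellation in the mapping cone between the resolution of $N$, which has length $3$ because $N$ is Artinian over $\bar R$, and that of $B$. Ruling out that cancellation is where the generality of $L$ and $\ell$, and the minimality of $b$, must be used. If this direct route stalls, the fallback is to follow the linkage argument of \cite{HU}: link $\Gamma$ by a general complete intersection of large degrees, and translate socle degrees of $\bar A$ into generator degrees of the linked ideal, where the lifting statement becomes transparent.
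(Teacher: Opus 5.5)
\textbf{Citing the result is what the paper does.} Citing \cite[Theorem 3.16]{HU} is exactly what the paper does; it gives no argument of its own. So your first sentence is already the whole proof there.

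\textbf{The self-contained sketch has a genuine gap.} Steps 1 and 2 are correct, but they are only setup. Step 3 carries the entire content: a nonlifting element of $I_{\Gamma|H}$ in degree $b$ should force a socle element of $\bar A$ in degree $\le b$. You state this only as an ``expected outcome.'' Worse, it cannot be completed as designed. Steps 1--2, the mapping-cone comparison of $N$, $B$, $A$, and the linkage fallback inside $H$ all use only data that exist for \emph{every} plane $H$ containing no component of $C$, and for any $\ell$ regular on $A$. But the inequality $a\le b$ fails for special planes. For instance, let $C$ be three disjoint lines in one ruling of a smooth quadric $Q$, so $\dim[M(C)]_0=2$. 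Let $H$ be the tangent plane to $Q$ at a point $p\notin C$. Then $H\cap Q=m_1\cup m_2$, where $m_1$ is a line of the ruling of $C$ not contained in $C$, so $C\cap m_1=\emptyset$. Hence $\Gamma=C\cap m_2$ is three distinct collinear points. Then $\bar A\cong\mathbb K[y]/(y^3)$ has socle only in degree $2$, so $a=2$. On the other hand, the equation of $m_2$ lies in $[I_{\Gamma|H}]_1$ and does not lift because $[I_C]_1=0$, so $b=1<a$.

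\textbf{What is missing.} The generality of $L$ must be used through a mechanism that lets $H$ move; the generality of $\ell$ is irrelevant, as the example shows. Saying that generality ``must be used'' to rule out cancellations does not supply such a mechanism. The classical device treats the coefficients $u_i$ of $L=\sum u_ix_i$ as parameters. For a nonlifting $F$, write $F|_C=L\,g$ with $g\in H^0(\mathcal O_C(b-1))$, and differentiate in $u_i$ (this uses characteristic zero). This gives $x_i\cdot g|_\Gamma=(\partial F/\partial u_i)|_\Gamma$, so the class of $g|_\Gamma$ in $H^1(\mathcal I_{\Gamma|H}(b-1))$ is annihilated by $\mathfrak m$. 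Since $\operatorname{soc}(\bar A)_e\cong\operatorname{soc}\bigl(H^1_*(\mathcal I_{\Gamma|H})\bigr)_{e-1}$, this is where socle elements of $\bar A$ in degree $\le b$ come from. One still has to arrange that the class is nonzero, for example by iterating when $[g]\in L\cdot M(C)$ and using higher derivatives.

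\textbf{Index slip.} Over the three-variable ring $\bar R$ one has $\operatorname{Tor}_3^{\bar R}(\bar A,\mathbb K)_{t+3}\cong\operatorname{soc}(\bar A)_t$. The shift $t+2$ belongs to $\operatorname{Tor}_2$ over $\bar R/(\ell)$.
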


\begin{definition}
    A set of points $\Gamma\subset \PP^n$ is in General Linear Position (GLP) if every subset of $t\leq n+1$ points is linearly independent.
\end{definition}
To define the Uniform Position Property (UPP), we recall the following result by Harris.

\begin{lemma}[{\cite[Uniform Position Lemma]{harris}}]Let $C\subset \PP^n$ be a reduced, irreducible, and nondegenerate curve, and let $\Gamma$ be a general hyperplane section of $C$. If $\Gamma$ imposes $t$ independent conditions on $|\mathcal O_{\PP^{n-1}}(\ell)|$, then any subset of $t$ points of $\Gamma$ also imposes $t$ conditions. \end{lemma}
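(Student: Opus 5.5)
The statement is Harris's Uniform Position Lemma. The paper cites it from \cite{harris}, so one could simply invoke it. The plan below recalls the standard monodromy argument that proves it.

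The first step is to set up the incidence correspondence. Let $d=\deg C$ and let $U\subset (\PP^n)^\vee$ be the open set of hyperplanes meeting $C$ transversally in $d$ distinct points. Put
\[
\mathcal I=\{(H,p): H\in U,\ p\in C\cap H\}.
\]
Then $\mathcal I\to U$ is a finite étale cover of degree $d$. Since $C$ is reduced, irreducible and nondegenerate and we are in characteristic zero, Harris's monodromy theorem says the monodromy group of this cover is the full symmetric group $S_d$. The proof goes in two parts. First, $\mathcal I$ is irreducible, since it fibers over $C$ with irreducible fibers, hence the group is transitive. Second, a general tangent line meets $C$ only at its point of tangency, by the trisecant lemma or the reflexivity argument in characteristic zero. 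This gives a general simple tangent hyperplane, whose local monodromy is a transposition. It also gives double transitivity, because the correspondence of pairs of distinct points is irreducible. A doubly transitive group containing a transposition is $S_d$.

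The second step is to apply this to subsets. For $H\in U$ and each $t$-element subset $S\subset \Gamma=C\cap H$, consider the rank of the evaluation map $H^0(\mathcal O_{\PP^{n-1}}(\ell))\to \mathbb K^S$. This rank is lower semicontinuous in $H$. Form the finite étale cover of $U$ whose fibers are the $t$-subsets of $C\cap H$. Because the monodromy is $S_d$, this cover is irreducible. Hence the rank has a single generic value on all of it, so all $t$-subsets of a general $\Gamma$ impose the same number of conditions. Call this common number $r_t$.

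The third step is a counting argument. Suppose $\Gamma$ imposes $t$ independent conditions, and choose $t$ points of $\Gamma$ imposing independent conditions. That particular $t$-subset imposes $t$ conditions, so $r_t=t$. Therefore every $t$-subset imposes $t$ conditions.

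The main obstacle is the first step: proving that the monodromy is the full symmetric group, and in particular producing the transposition via a general simple tangent hyperplane. This is where characteristic zero and nondegeneracy are used. The rest is semicontinuity and bookkeeping.
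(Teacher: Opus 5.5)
The paper does not prove this lemma; it quotes it from Harris. Your outline is Harris's own argument: the monodromy of the hyperplane-section cover is $S_d$, so the cover of $t$-subsets is irreducible, and semicontinuity plus a choice of $t$ independent points finishes the proof, so it is correct and matches the cited source.

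One fix in the transposition step: the trisecant lemma is about general chords and does not show that a general tangent line misses $C$ elsewhere, and for $n=2$ that claim is false. What you actually need is a hyperplane tangent at a single smooth point with contact order two, avoiding $\mathrm{Sing}(C)$ and transverse elsewhere; reflexivity (biduality) in characteristic zero, or a dimension count on hyperplanes containing a general tangent line, supplies this, and for $n=2$ the lemma is trivial anyway.
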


 We say a set of points has UPP if it satisfies the same property as $\Gamma$ in the previous lemma. 
Notice that any set in uniform position is in general linear position.

We will use the following sequence of facts in Section \ref{sec5}. They are very well known and have been used frequently without proof. Since we could not find a reference, we include arguments here for completeness.  (We note that \cite[Theorem 3.4]{GerMar} uses a concept of uniform position, which is different from the commonly used terminology defined above.)

\begin{lemma}
    \label{lem:irredinit}
Let $Z \subset \mathbb P^n$ ($n \ge 2$) be a finite set of points with the UPP. Denote by $a$ the initial degree of $I_Z$. Then a general form in $[I_Z]_a$ is irreducible.
\end{lemma}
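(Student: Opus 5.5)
We argue by contradiction: if a general form of $[I_Z]_a$ were reducible, the factors would cut out subsets of $Z$ that impose too few conditions in lower degree, contradicting the UPP. Let $V=[I_Z]_a$ and suppose a general $F\in V$ is reducible.

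\textbf{Step 1: a uniform factorization type.} The set of forms of degree $a$ that factor as a product of forms of degrees $d_1,\dots,d_k$ is the image of a product of projective spaces under multiplication, hence closed. So $\mathbb P(V)$ is a finite union of closed sets indexed by factorization types. Since $\mathbb P(V)$ is irreducible, a single type $a=a_1+a_2$ with $1\le a_1\le a_2<a$ occurs for every $F\in V$.

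\textbf{Step 2: splitting $Z$.} For general $F=F_1F_2$, every point of $Z$ lies on $V(F_1)$ or on $V(F_2)$. Set $Z_1=Z\cap V(F_1)$ and $Z_2=Z\setminus Z_1\subseteq V(F_2)$. Since $\deg F_i<a$, neither $F_i$ lies in $I_Z$, so $Z_1$ and $Z_2$ are proper nonempty subsets of $Z$. Let $|Z_1|=s_1$, $|Z_2|=s_2$, $s=|Z|$ and $s_1+s_2=s$.

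\textbf{Step 3: contradiction with the UPP.} The UPP says $h_Y(t)=\min\{s',h_Z(t)\}$ for every subset $Y\subseteq Z$ with $|Y|=s'$. Since $a$ is the initial degree, $h_Z(t)=\binom{t+n}{n}$ for $t<a$.

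\begin{itemize}
\item Because $F_1\in [I_{Z_1}]_{a_1}$ is nonzero and $a_1<a$, we get $h_{Z_1}(a_1)<\binom{a_1+n}{n}=h_Z(a_1)$. The UPP then forces $h_{Z_1}(a_1)=s_1$, so $s_1\le \binom{a_1+n}{n}-1$.
\item Likewise $s_2\le \binom{a_2+n}{n}-1$.
\item Hence $s\le \binom{a_1+n}{n}+\binom{a_2+n}{n}-2$.
\end{itemize}

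On the other hand, $Z$ imposes independent conditions in degree $a-1$, since $h_Z(a-1)=\binom{a-1+n}{n}$. This gives $s\ge \binom{a-1+n}{n}$. So the contradiction must come from the inequality
\[
\binom{a_1+n}{n}+\binom{a_2+n}{n}-2<\binom{a-1+n}{n}
\qquad\text{for } a_1+a_2=a,\ a_i\ge1,\ n\ge2.
\]
This holds when $a_1\ge2$, but it fails for small cases such as $a_1=1$. For example, $a=2$ gives $2(n+1)-2$ versus $n+1$, and the inequality fails. So the counting must be refined; this is the main obstacle.

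\textbf{Step 4: the refinement.} I would sharpen the count by exploiting that $F$ is general, through a monodromy argument.

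\begin{itemize}
\item Fix $F_2$ and vary the first factor: $F_1'F_2\in V$ for all $F_1'$ in the linear space $W=\{G:GF_2\in V\}$. Each such $G$ vanishes on $Z_2':=Z\setminus V(F_2)$, and $Z_2'\subseteq Z_1$.
\item By the UPP bound in degree $a_1$, the points of $Z_2'$ impose exactly $|Z_2'|$ conditions on forms of degree $a_1$. So $\dim W=\binom{a_1+n}{n}-|Z_2'|$.
\item If $\dim W=1$ for general $F$, then the forms of $V$ arise from a family of dimension at most that of the $F_2$'s meeting $Z$ suitably. Comparing with $\dim V=\binom{a+n}{n}-s$, and again using that $s$ is large relative to $\binom{a-1+n}{n}$, should yield the contradiction.
\item I would also use that UPP points are in linear general position, which handles the $a_1=1$ case directly: a hyperplane contains at most $n$ points of $Z$. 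Combining this with $Z_2\subseteq V(F_2)$, and with the UPP bound for $Z_2$ in degree $a-1$ (which gives $s_2\le\binom{a-1+n}{n}-1$), then forces $s<\binom{a-1+n}{n}$ when $s$ is compared against $h_Z(a-1)$, a contradiction.
\end{itemize}

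I expect the case-by-case numerics when $a_1$ is small to be the delicate part. I would first settle $a_1=1$ using linear general position as above, and then treat $a_1\ge2$ via the binomial inequality in Step 3.
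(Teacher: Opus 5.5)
There is a genuine gap: your argument never reaches a contradiction. Bounding $s_1,s_2$ and comparing with $s\ge\binom{a-1+n}{n}$ cannot work in general, because reducible members of $V=[I_Z]_a$ really do exist in the critical cases. They are simply not general. For example, four general points of $\mathbb P^2$ (so $a=2$) lie on three line pairs. Likewise, ten general points (so $a=4$) lie on the finitely many products of two conics through complementary $5+5$ splittings.

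Consistent with this, two of your numerical claims are wrong. First, the Step 3 inequality does not hold whenever $a_1\ge 2$: for $n=2$, $a_1=a_2=2$ both sides equal $10$. Second, your treatment of $a_1=1$ fails. The bounds $s_1\le n$ and $s_2\le\binom{a-1+n}{n}-1$ only give $s\le\binom{a-1+n}{n}+n-1$, which is compatible with $s\ge\binom{a-1+n}{n}$. Step 4 is a plan rather than an argument. The comparison sketched there is not the one that works: it uses $\dim V=\binom{a+n}{n}-s$, which is only a lower bound, and appeals to $s$ being large.

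What is missing is an argument that uses the size of the linear system $V$ rather than the number of points. The paper gets this from Bertini's theorem. It rules out a composite pencil by minimality of $a$, and rules out a fixed component by a UPP point-swapping argument plus induction on $a$.

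Your setup can also be completed directly by a dimension count. Every reducible $F\in V$ lies in the image of $\mathbb P([I_{Z_1}]_{a_1})\times\mathbb P([I_{Z_2}]_{a_2})\to\mathbb P(V)$ for one of finitely many types $(a_1,a_2)$ and splittings $Z=Z_1\sqcup Z_2$. Since $a_i<a$, UPP gives $h_{Z_i}(a_i)=\min\{s_i,\binom{a_i+n}{n}\}$. Hence each image is closed, empty if some $s_i\ge\binom{a_i+n}{n}$, and otherwise of dimension at most $\binom{a_1+n}{n}+\binom{a_2+n}{n}-s-2$. On the other hand, $\dim\mathbb P(V)\ge\binom{a+n}{n}-s-1$.

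Now $\binom{a_1+n}{n}+\binom{a_2+n}{n}\le\binom{a+n}{n}$ for $n\ge 2$ and $a_1,a_2\ge 1$; for $n=2$ the difference is $a_1a_2-1$. So the reducible locus is a proper closed subset of the irreducible space $\mathbb P(V)$. Note that $s$ cancels, so no lower bound on $s$ is needed. This would be a shorter route than the paper's, but your proposal does not carry it out.
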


\begin{proof}
We use induction on $a \ge 1$. If $a = 1$ the claim is clear. Assume $a \ge 2$.

 According to Bertini's theorem \cite[Theorems 5.1, 5.2]{kleiman}, there are two cases that we have to rule out. 
 
Case 1: Assume $[I_Z]_a$ is composite with a linear pencil $\mathcal{L}$. Denote the generators of $\mathcal{L}$ by $F$ and $G$. Thus, every element of $[I_Z]_a$ is a product of elements of the vector space $\langle F,G \rangle$. The factors of a general polynomial in $[I_Z]_a$ vanish precisely at the support of the subscheme defined by $F$ and $G$. Hence, each factor vanishes on $Z$ and its degree is less than $a$, which contradicts the definition of $a$.

Case 2: Assume the forms in $[I_Z]_a$ have a common divisor of positive degree. Denote by $t$ the maximum of the number of points in $Z$ vanishing on an irreducible common divisor of $[I_Z]_a$ with positive degree. Denote by $G$ one of these irreducible divisors that vanishes on $t$ points of $Z$. Put $d = \deg G$. If $d = a$ we are done because then it follows that $[I_Z]_a$ is generated by $G$. 

Assume $d < a$. Let $Z_1$ be the set of points of $Z$ lying on $G$ and put $Z_2 = Z \backslash Z_1$. Note that $t = |Z_1|$ and 
$I_{Z_2} = I_Z : G$ and that $\dim_\mathbb{K} [I_Z ]_a = \dim_\mathbb{K}[I_{Z_2} ]_{a-d}$. Since $1 \le d < a$, it follows that both $Z_1$ and $Z_2$ are not empty. Choose points $P_1 \in Z_1$ and $P_2 \in Z_2$ and set $Y_1 = Z_1 - P_1 + P_2$ and $Y_2 = Z_2 - P_2 + P_1$. 
By UPP there is a form $G' = G_{P_1, P_2}$ of degree $d$ vanishing on $Y_1$. Since $G$ does not vanish on $P_2$, the forms $G$ and $G'$ define distinct hypersurfaces. Using UPP again, we see that the vector spaces $[I_{Z_2}]_{a-d}$ and $[I_{Y_2}]_{a-d}$ have the same dimension. Hence, $G' \cdot [I_{Y_2}]_{a-d} \subseteq [I_{Z}]_{a}$ implies equality by comparing dimensions. Consider any $F \neq 0$ in $[I_{Y_2}]_{a-d}$. Since $G$ is irreducible with the same degree 
as $G'$ and $G$ is a divisor of $G' F \in I_Z$, the form $F$ factors as $F = G H$. By definition $G$ does not vanish on any point of $Z_2$ but $F$ vanishes on $Y_2$. Hence $H$ must vanish on $Z_2 - P_2$. Since $G'$ vanishes on $Y_1$ it follows that $G' H$ vanishes on $Z - P_1$. Choose a linear form $L$ in $I_{P_1}$. It follows that $G' H L$ is in $I_Z$ 
and has degree $a - d + 1$. If $d \ge 2$ then this is a contradiction to the definition of $a$ as initial degree. Thus, we must have $d =1$. So, $G' = G_{P_1, P_2}$ is an irreducible common divisor of $[I_Z]_a$. By definition of $t$, we get that $G_{P_1, P_2}$ vanishes at not more than $t  = |Z_1|$ points of $Z$. It follows that the set of these points is exactly $Y_1 = Z_1 -P_1 + P_2$. Hence,  any two distinct choices of pairs of points $(P_1, P_2)$ as above give distinct irreducible factors $G_{P_1, P_2}$ of $F$. 
Since $Z$ spans $\PP^n$ and $n \ge 2$, at least one of $Z_1$ or $Z_2$ has at least two elements. It follows that there is a pair $(Q_1, Q_2) \neq (P_1, P_2)$ with $Q_1 \in Z_1$ and $Q_2 \in Z_2$. Thus, $G, G' = G_{P_1, P_2}$ and $G'' = G_{Q_1, Q_2}$ are common divisors of $[I_Z]_a$ and linearly independent. 

Notice that the set $Z_2$ has the UPP and the initial degree of $I_{Z_2}$ is $a-1$. Hence, by induction on $a$, a general form $F$ in $I_{Z_2}$ is irreducible. Since $F G$ is in $I_Z$, it follows that the gcd of $[I_Z]_a$ is $FG$. This contradicts the fact that $G G'  G''$ divides the gcd of $[I_Z]_a$ and completes the proof. 
\end{proof}

The above result can be extended as follows. 

\begin{corollary}
\label{UPP cor}
    Let $Z$ be a finite set of points in $\mathbb P^n$ ($n \ge 2$) with the UPP. Denote by $a$ the initial degree of $I_Z$ and let $b$ be the least degree in which $\dim_{\mathbb K} [I_Z]_b > \binom{b-a +n}{n}$.  
For any integer $t$, a general element in $[I_Z]_t$ is irreducible if and only if $t = a$ or $t \ge b$. 
\end{corollary}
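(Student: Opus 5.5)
We treat the three ranges of $t$ separately: $t<a$, $t=a$, $a<t<b$, and $t\ge b$. For $t<a$ there is nothing in $[I_Z]_t$, so we may assume $t\ge a$. The case $t=a$ is exactly \Cref{lem:irredinit}. Write $F_a$ for a general element of $[I_Z]_a$; by that lemma it is irreducible.

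For $a<t<b$ we show that a general element is reducible. Since $F_a\cdot [R]_{t-a}\subseteq [I_Z]_t$, we always have $\dim [I_Z]_t\ge \dim [I_Z]_a\cdot 0+\binom{t-a+n}{n}$. By the definition of $b$, for $a\le t<b$ this is an equality. Here we need $\dim[I_Z]_a=1$, which follows from $\dim [I_Z]_a\le \binom{n}{n}=1$ whenever $a<b$. Hence $[I_Z]_t=F_a\cdot[R]_{t-a}$. Every element of $[I_Z]_t$ is then divisible by $F_a$, and since $t>a$ it is reducible. The degenerate case $b=a$ causes no trouble: then the range $a<t<b$ is empty.

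For $t\ge b$ we show that a general element is irreducible, using Bertini's theorem \cite[Theorems 5.1, 5.2]{kleiman} as in the proof of \Cref{lem:irredinit}. We must rule out two things: that $[I_Z]_t$ has a fixed component, and that it is composite with a pencil.

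\emph{Fixed component.} Since $Z$ is a finite set of points, $I_Z$ is generated in degrees $\le$ its regularity, and the base locus of $[I_Z]_t$ is exactly $Z$ once $t\ge$ that generation degree. The more delicate step is to show that there is no common divisor already at $t=b$. Suppose the gcd $G$ of $[I_Z]_b$ has positive degree $d$. Then $[I_Z]_b=G\cdot V$ with $V\subseteq [I_{Z_2}]_{b-d}$, where $Z_2$ is the set of points of $Z$ not on $G$. We then run the UPP exchange argument of \Cref{lem:irredinit}, swapping a point on $G$ with a point off $G$, to produce a second divisor $G'$. The aim is to force $G$ to be a multiple of $F_a$, or $d=a$ with $G=F_a$. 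But then $[I_Z]_b=F_a\cdot[R]_{b-a}$, which contradicts the defining inequality of $b$.

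For $t>b$ we multiply by linear forms. We have $[I_Z]_b\cdot R_{t-b}\subseteq[I_Z]_t$. Any fixed component of $[I_Z]_t$ then divides every element of $[I_Z]_b\cdot R_{t-b}$, hence divides the gcd of $[I_Z]_b$, which is $1$.

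\emph{Composite with a pencil.} Here we argue as in Case 1 of \Cref{lem:irredinit}. The factors of a general element all pass through $Z$ and have degree $<t$. Each factor therefore lies in $[I_Z]_{s}$ for some $s<t$, and a general element of $[I_Z]_t$ would lie in the image of a product of lower-degree pieces of $I_Z$. A dimension count contradicts this, since $[I_Z]_t$ properly contains $F_a\cdot R_{t-a}$ for $t\ge b$. Alternatively, the general member contains forms $F_a\ell^{t-a}+H$ with $H\notin (F_a)$, and these are not powers of pencil elements.

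We expect the main obstacle to be excluding a fixed component of $[I_Z]_b$ exactly in degree $b$. This requires carefully adapting the UPP exchange argument of \Cref{lem:irredinit}, including the induction applied to $Z_2$.
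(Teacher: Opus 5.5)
Your treatment of $t<a$, $t=a$ and $a<t<b$ matches the paper's and is correct. The gap is in the range $t\ge b$. You flag as the main obstacle the claim that $[I_Z]_b$ has no common divisor, and you never prove it. You only announce that you will adapt the UPP exchange argument of \Cref{lem:irredinit} to ``force'' $G$ to be a multiple of $F_a$, without saying how, and your reduction for $t>b$ depends on this step.

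The missing idea is much simpler and needs no exchange argument. Since $a<b\le t$, we have $F_a\cdot[R]_{t-a}\subseteq[I_Z]_t$, and the gcd of $F_a\cdot[R]_{t-a}$ is $F_a$ (compare $F_ax_0^{t-a}$ and $F_ax_1^{t-a}$). So any common divisor $G$ of positive degree of $[I_Z]_t$ divides $F_a$. Because $[I_Z]_a=\langle F_a\rangle$, \Cref{lem:irredinit} says $F_a$ is irreducible. Hence $G=F_a$ up to a scalar, and $[I_Z]_t\subseteq F_a\cdot[R]_{t-a}$. For $t=b$ this contradicts $\dim[I_Z]_b>\binom{b-a+n}{n}$. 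For $t>b$, either use your reduction to degree $b$, or take $H\in[I_Z]_b\setminus F_a[R]_{b-a}$ and a suitable linear form $\ell$; then $H\ell^{t-b}\notin(F_a)$. This is how the paper argues: UPP enters only through the irreducibility of $F_a$.

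Your pencil step is also not an argument as written. The sentence ``a dimension count contradicts this'' has nothing behind it. Your alternative misstates the notion: being composite with a pencil means every member is a product of members of a fixed pencil, not a power of one.

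The paper's argument again uses $F_a g\in[I_Z]_t$ with $g\in[R]_{t-a}$ general, which is irreducible because $n\ge 2$. The irreducible factors of members of a fixed pencil form at most a one-parameter family. Meanwhile $g$ varies in a family of dimension $\binom{t-a+n}{n}-1\ge n\ge 2$. So a general $g$ cannot divide a product of pencil members.

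Equivalently: off $\{F_a=0\}$, the subsystem $F_a\cdot[R]_{t-a}$ induces the $(t-a)$-uple Veronese map. That map is a linear projection of the map given by $[I_Z]_t$, so the image of the latter has dimension at least $n\ge 2$.

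Finally, your remark that the base locus is $Z$ once $t$ reaches the generation degree of $I_Z$ does not cover $t$ between $b$ and that degree, so it can be dropped.
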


\begin{proof} 
If $t < a$ then $[I_Z]_t = 0$ and the claim is clear. 
If $b = a$ then we are done by \Cref{lem:irredinit}. 

If $a \le t < b$, the definition of $b$ gives that $ [I_Z]_a$ is generated by one form, $F$, and that $F$ is a gcd of $ [I_Z]_t$. Hence any element of $ [I_Z]_t$ is reducible. 

It remains to consider the case where $a < b \le t$. {Since the general element of $[I_Z]_a$ is irreducible,}
by definition of $b$ the forms in 
$[I_Z]_t$ do not have a common factor. Thus, we have to rule out that $[I_Z]_t$ is composite with a linear pencil 
$\mathcal{L}$. Consider a nonzero $F \in [I_Z]_a$ and a general form $G \in R$ of degree $t - a$. Then $G$ is irreducible, since $n \ge 2$, and is not in the pencil $\mathcal{L}$. Hence, $FG \in [I_Z]_t$ implies that $[I_Z]_t$ cannot be composite with the pencil $\mathcal{L}$. 
\end{proof}

We illustrate the result by an example. 

\begin{example}
    Let $\lambda$ be a line in $\mathbb P^3$ and consider the curve $C_1$ defined by $I_{C_1} = I_\lambda^3$. The curve $C_1$ is not reduced: it has degree 6 and is supported on $\lambda$. The linear system $|[I_{C_1}]_3|$ is composite with a pencil, since every element is a product of three linear forms in $I_\lambda$ (possibly repeated).

    Consider complete intersections of type $(5,5)$ in $I_{C_1}$. The general residual curve under such a link is never irreducible. Indeed, if $(F,G)$ is such a complete intersection then each of $F$ and $G$ is triple along $\lambda$, so the residual, $C_2$, contains the nonreduced curve defined by $I_\lambda^2$ as a component. This component has degree 3.

    Taking the colon ideal $I_{C_2} : I_\lambda^2$, we obtain a  curve $C_3$ of degree $16 = 25 - 6 - 3$. One can check on a computer that $C_3$ is smooth and lies on a pencil of quintics, namely the quintics that defined the link. 

    Now let $Z$ be a general set of 200 points on $C_3$. Note that $Z$ is a set of points with UPP. These points also have an ideal that starts in degree 5, and the corresponding linear system $|[I_Z]_5|$ is the same pencil. The base locus of this pencil is not reduced or irreducible, by construction. Furthermore, since the base locus contains curves defined by a power  of $I_\lambda$, the general element of this linear system is not smooth (although it is smooth away from the base locus). Nevertheless, the linear system is not composite with a pencil and does not have a 2-dimensional base locus, and so the general element is irreducible.
\end{example}

As a consequence of \Cref{UPP cor}, we get: 

\begin{corollary} \label{cor:HF of CI}
    Let $Z \subset \mathbb P^2$ be a set of points satisfying the UPP. If $Z$ has the Hilbert function of a complete intersection, then it is a complete intersection.
\end{corollary}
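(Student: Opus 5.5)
Let $Z\subset\PP^2$ satisfy the UPP and have the Hilbert function of a complete intersection of type $(a,b)$ with $a\le b$. The plan is to produce two forms $F\in[I_Z]_a$ and $G\in[I_Z]_b$ with no common factor. Then $V(F,G)$ is a zero-dimensional complete intersection of degree $ab=|Z|$ containing $Z$, so $I_Z=(F,G)$.

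The first step reads off the numerics. The Hilbert function of a complete intersection of type $(a,b)$ in $\PP^2$ gives $\dim[I_Z]_t=0$ for $t<a$. For $a\le t<b$ it gives $\dim[I_Z]_t=\binom{t-a+2}{2}$, the multiples of a single form of degree $a$. In degree $b$ it gives $\dim[I_Z]_b=\binom{b-a+2}{2}+1$. (If $a=b$ this says $\dim[I_Z]_a=2$.) So in the notation of \Cref{UPP cor}, the initial degree of $I_Z$ is $a$, and the integer called $b$ there equals our $b$.

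The second step applies \Cref{lem:irredinit}: a general $F\in[I_Z]_a$ is irreducible. For $a<b$, the dimension count in degrees $a\le t<b$ shows that $[I_Z]_a=\langle F\rangle$ is one-dimensional. Hence $F$ is a fixed irreducible form.

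The third step applies \Cref{UPP cor} with $t=b$: a general element $G\in[I_Z]_b$ is irreducible. We must check that $F$ does not divide $G$.
\begin{itemize}
\item If $a<b$, the multiples $F\cdot[R]_{b-a}$ span a subspace of dimension $\binom{b-a+2}{2}$, which is strictly smaller than $\dim[I_Z]_b$. So a general $G$ is not a multiple of $F$. Since $G$ is irreducible of degree $b>a$, it shares no factor with $F$.
\item If $a=b$, take $F,G$ to be two general elements of the two-dimensional space $[I_Z]_a$. Both are irreducible and not proportional, so they have no common factor.
\end{itemize}

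Finally, by Bézout, $V(F,G)$ is a complete intersection scheme of degree $ab$ containing $Z$. The Hilbert function of $Z$ eventually equals $ab$, so $|Z|=ab$ and the inclusion $(F,G)\subseteq I_Z$ becomes an equality of saturated ideals. This holds because a complete intersection ideal is saturated and both schemes have the same degree.

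The only delicate point is the irreducibility of $G$, which is exactly what \Cref{UPP cor} supplies. Everything else is dimension counting from the complete intersection Hilbert function.
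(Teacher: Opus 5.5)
Your proof is correct and follows essentially the same route as the paper. Both arguments take an irreducible form $F$ in the initial degree $a$ from \Cref{lem:irredinit}, identify $b$ with the invariant of \Cref{UPP cor}, choose a form $G\in[I_Z]_b$ sharing no factor with $F$, and finish with the degree count $|Z|=ab$. One remark: irreducibility of $G$ is not the delicate point you suggest, and in fact it is unnecessary. Since $F$ is irreducible, any $G\in[I_Z]_b\setminus(F)$ is already coprime to $F$. Your own dimension count supplies such a $G$ when $a<b$, and any $G$ independent of $F$ works when $a=b$. This coprimality is all the paper's proof uses.
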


\begin{proof}
    Say $Z$ has the Hilbert function of a complete intersection of type $(a,b)$, $a \leq b$. A general element $F$ of degree $a$ is irreducible, and the Hilbert function forces $b$ to be as in Corollary \ref{UPP cor}. Since $F$ is irreducible, there is no common factor among the elements of degree $b$ so the result follows.
\end{proof}

%%%%%%%%%%%%%%%%%%%%%%%%%%%%%%%%%%%%%%%%%%%%%%%%%%%%%%%%%%%%%%%%%%%%%%%

\section{General sets of skew lines in $\mathbb P^3$}
\label{sec: general lines}

It has been known since \cite{HH} what the dimensions of the Hartshorne-Rao module are of a general set $C$ of skew lines. In this section we show that for such a curve, $M(C)$ has WLP and, at least in range $\leq 3$, the SLP. We begin with the former. 

\begin{proposition} \label{Prop:GenlWLP}
    Let $C$ be a general set of skew lines  in $\PP^3$. Then $M(C)$ has WLP.
\end{proposition}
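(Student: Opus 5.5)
We prove the statement with the exact sequence \eqref{usual 3} for $m=1$. For a general plane $H=V(L)$, $Z_1=C\cap H$ is a set of $r$ points in $H\cong\PP^2$, general because the lines are general. Thus $Z_1$ has generic Hilbert function in $\PP^2$, i.e.\ it is in generic position on $H$. By \Cref{h0h1=0}, for every $t$ at least one of $h^0(\mathcal I_{Z_1|H}(t))=[I_{Z_1|H}]_t$ and $h^1(\mathcal I_{Z_1|H}(t))$ vanishes. We then split into two regimes, according to which term is zero.

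\emph{Case 1: $h^1(\mathcal I_{Z_1|H}(t))=0$.} Here \eqref{usual 3} shows that $\times L\colon [M(C)]_{t-1}\to[M(C)]_t$ is surjective, which is maximal rank.

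\emph{Case 2: $[I_{Z_1|H}]_t=0$, i.e.\ $\binom{t+2}{2}\le r$.} Here \eqref{usual 3} shows that $\times L$ is injective on $[M(C)]_{t-1}$, again maximal rank.

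By \Cref{h0h1=0} these two cases exhaust all $t$, so $M(C)$ has WLP.

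The only point needing care is the claim that $C\cap H$ is a general set of $r$ points of $H$, so that it has maximal (generic) Hilbert function. We justify it as follows. Fix $H$. Given $r$ general points of $H$, choose a general line through each; these lines are general skew lines, and their intersection with $H$ is the chosen points. The set of configurations of $r$ lines is irreducible, and the maximal Hilbert function condition on $C\cap H$ is open in the pair $(C,H)$. Since it holds for such pairs, it holds for general $C$ and general $H$. No other obstacle appears: the vanishing of the $H^1$ and $H^0$ terms in \eqref{usual 3} is exactly what translates injectivity or surjectivity into the needed maximal-rank statement, and the cokernel/kernel terms sit at the correct positions in the long exact sequence.
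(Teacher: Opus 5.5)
Your proof is correct and is the same argument as the paper's: apply \eqref{usual 3} with $m=1$, observe that $C\cap H$ is a general set of points in the plane and hence in generic position, and use \Cref{h0h1=0} to get injectivity or surjectivity of $\times L$ in every degree. Your justification that $C\cap H$ is general (via irreducibility of the parameter space of pairs $(C,H)$ and openness of the maximal Hilbert function condition) spells out a step the paper leaves implicit.
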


\begin{proof}
    In the exact sequence (\ref{usual 3}), $Z_1|H$ just represents a general set of points on a plane, but viewed as being on the  plane instead of in $\PP^3$. Then the result follows from Lemma~\ref{h0h1=0} and standard facts about a general set of points in $\PP^2$.
\end{proof}

We expect that $M(C)$  even has the SLP. Below we establish partial results in that direction.  The technical heart of our argument is a specialization argument.  
We begin with a definition.

\begin{definition}
A {\it curvilinear scheme $A$ of multiplicity $m$ in $\mathbb P^n$ supported on a point $P$} is a scheme whose ideal has the form $[I_P^m,I_C]^{sat}$ where $C \subset \PP^n$ is a curve that is smooth at $P$. If $C$ is a line, we say that $A$ is a {\it flat fat point}. Its ideal is of the form $(\ell^m, I_C)$, where $\ell$ is any linear form such that $I_P = (\ell, I_C)$. 
\end{definition}

A flat fat point is determined by $P$, the tangent direction, and $m$. Its $h$-vector has the form $(1,1,\dots,1)$ where there are $m$ entries equal to $1$. As a result, it imposes independent conditions on forms of degree $\geq m-1$ but not in smaller degrees. A union of $s$ disjoint flat fat points of multiplicity $m$ has degree $ms$. We denote by $Z_{s,m}$ such a union if the points and the directions are chosen generically and $m$ is fixed.

\begin{remark} 
  \label{rem:flex}
Note that given a reduced curve $C \subset \mathbb P^2$  where no component is a line, a general point is not a flex \cite[Exercise 5.23]{FultonBook} (i.e. at a general point the tangent line meets $C$ with multiplicity $2$, not $3$). Thus imposing that a line meets $C$ with multiplicity 3 at a point is one additional condition beyond imposing tangency in general.    
\end{remark}

\begin{proposition} \label{GenFlatFat}
    Let $Z_{s,m}$ be a general union of $s$ flat fat points of multiplicity $m$ in $\mathbb P^2$. Assume that $1 \leq m \leq 3$. If $m = 1$ or $m=2$, assume $s \geq 1$, and if $m = 3$ assume that $s \geq 3$. 
    
    Then $Z_{s,m}$ has generic Hilbert function, i.e.
    \[    
    h_{Z_{s,m}}(j) = \min \left \{ ms, \binom{j+2}{2} \right \}.
    \]

\end{proposition}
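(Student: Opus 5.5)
The plan is to reduce the statement to the assertion that $Z = Z_{s,m}$ imposes the expected number of conditions on plane curves of each degree $j$. Equivalently, we must show
\[
\dim [I_Z]_j = \max\left\{\binom{j+2}{2} - ms,\ 0\right\}
\]
for every $j$. By semicontinuity it suffices, for each $j$, to exhibit one configuration of $s$ flat fat points of multiplicity $m$ achieving the expected value. It also suffices to treat only the two critical values of $s$: the largest $s$ with $ms \le \binom{j+2}{2}$ (independence of conditions) and the smallest $s$ with $ms \ge \binom{j+2}{2}$ (no curves of degree $j$). Adding points preserves the vanishing of $[I_Z]_j$, and removing points preserves independence.

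The case $m=1$ is the classical statement for general points. For $m=2$, a flat fat point of multiplicity $2$ is a point together with a tangent direction. A curve passes through it exactly when it passes through $P$ and is tangent to $\ell$ at $P$, or is singular at $P$. We argue by induction on $s$, adding one point at a time. Assume the first $s-1$ double flat points impose independent conditions and the space $V = [I_{Z_{s-1,2}}]_j$ is nonzero. The general point $P$ is not a base point of $V$, so passing through $P$ cuts the dimension by one. The remaining tangency condition drops the dimension again, unless every curve in $V$ through $P$ is tangent to the general line $\ell$ through $P$. Since $\ell$ is a general line through $P$, that would force every such curve to be singular at $P$. This fails unless $V$ restricted to a neighbourhood of a general point is a pencil-type system, and then a dimension count handles it. The case where $\dim V \le 2$ must be checked directly, for instance when $V$ is a single curve through the new point.

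For $m=3$ the same incremental strategy applies, now with three conditions per point: passing through $P$, tangency to $\ell$, and meeting $\ell$ with multiplicity $3$. The final condition is handled by Remark~\ref{rem:flex}. At a general point of a reduced curve with no linear component, the tangent line is not a flex, so imposing contact of order $3$ with a general line is a genuinely new condition. The hypothesis $s\ge 3$ is needed because of small exceptions. One triple flat point does not impose independent conditions on conics, since it gives $3$ conditions but its $h$-vector $(1,1,1)$ means it fails in degree $1$ against the bound $\min\{3,3\}=3$. Two such points on cubics also need checking, since the expected value $6$ is less than $10$. So the small values $s\le 3$ and small degrees $j\le 3$ should be verified by hand, or by an explicit configuration computation.

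The main obstacle is the case where the residual linear system $V$ after adding points is small, namely of dimension $1$, $2$, or $3$. There the genericity argument can fail. The residual curves may contain lines, for example the line $\ell$ itself or lines joining earlier points. Reducible members may also have a component containing all base points, which makes Remark~\ref{rem:flex} inapplicable. To handle this, I would use a Castelnuovo-type specialization, taking the exact sequence~\eqref{eq:castelnuovo} with $H$ a line. One places several flat fat points with their directions along a fixed line $\lambda$, so that each contributes $m$ conditions on $\lambda$ in degree $j$ and the residual is a smaller configuration in degree $j-1$. Choosing $\lfloor (j+1)/m\rfloor$ points on $\lambda$ and applying induction on $j$ to the residual scheme should give the expected dimension. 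The leftover remainder, coming from $j+1$ not being divisible by $m$, is handled by placing one point on $\lambda$ with its direction transverse to $\lambda$, so that it contributes only one condition on $\lambda$ and leaves a flat point of lower multiplicity in the residual. This is exactly where the cases $m\le 3$ remain manageable.
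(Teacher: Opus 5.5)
\textbf{The gap is in the case $m=3$.} Your cases $m=1$ and $m=2$ are fine and follow the paper's induction on $s$. The ``pencil-type'' caveat for $m=2$ is unnecessary: in characteristic zero, if $F,G\in V$ are independent, the member $G(P)F-F(P)G$ through a general point $P$ has gradient $G(P)^2\nabla(F/G)(P)\neq 0$. So it is smooth at $P$, and a general direction $\ell$ is not tangent to it.

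For $m=3$ the problem is the third condition. Remark~\ref{rem:flex} helps only if some member of the system through $P$ and tangent to $\ell$ has $P$ as a general point of a component that is not a line. If the members contain $\ell$ or other lines, the contact-three condition is vacuous. This already happens going from $s=1$ to $s=2$ on conics, where every surviving member is $\ell_1\ell_2$. You flag this obstacle but do not resolve it.

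Your fallback Castelnuovo specialization does not close as written. A triple flat point at $P\in\lambda$ with transverse direction $\mu$ has ideal $(\mu,\lambda^3)$. Its trace on $\lambda$ is the reduced point $(\mu,\lambda)$, and its residual is $(\mu,\lambda^3):\lambda=(\mu,\lambda^2)$, a double flat point. So the residual scheme is a general union of triple and double flat points, not some $Z_{s',3}$, and your induction hypothesis says nothing about it. Moreover, when $j+1\equiv 2 \pmod 3$ you need two transverse points, not one.

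To run this Horace-type induction you would have to state and prove a result for general unions of flat fat points of multiplicities $1,2,3$ simultaneously, with its exceptional cases and base cases. Your description of the exceptions is also off. A single triple point fails only in degree $1$; it does impose $3$ independent conditions on conics, since every such conic contains $\ell$. And $Z_{2,3}$ fails in degree $2$ (it lies on $\ell_1\ell_2$), not in degree $3$. None of the mixed statement is in the proposal, so the $m=3$ case is unproved.

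\textbf{How the paper closes this.} The missing ingredient is a guarantee that the relevant linear systems have members with a non-line component, plus a genuine base case at $s=3$. The paper proves that $[I_{Z_{s,3}}]_i$ has no fixed component unless $\dim [I_{Z_{s,3}}]_i=\binom{i-t+2}{2}$, with $t$ the initial degree. Otherwise every member would be a product of an element of $I_{Z_{s,1}}$ and one of $I_{Z_{s,2}}$, which the inequality $t\le a+b-1$ on initial degrees rules out; similarly, the system is not composite with a pencil. Bertini then yields members with a component that is not a line, which is exactly what makes Remark~\ref{rem:flex} usable in the step from $s$ to $s+1$. The base case, that $Z_{3,3}$ lies on a unique cubic, is settled by an explicit configuration: three flexes of a smooth cubic, or the ideal $((x+y)^3,z)\cap((x+z)^3,y)\cap((y-z)^3,x)$, which is generated in degree $3$ by $xyz$.
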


\begin{proof}

First note that imposing vanishing at a flat fat point of multiplicity $m$ imposes at most $m$ conditions on a linear system. We focus on the question of whether these conditions are independent.

When $m=3$ and $s=1$ or $2$, the statement is not true. The case $s=1$ is immediate, and the case $s=2$ follows since $Z_{2,3}$ clearly lies on a conic, which is excluded by the claimed Hilbert function; hence the assumption $s \geq 3$ is necessary in the case $m=3$.

\vspace{.2in}

\noindent {\bf Case 1}: $m=1$.    

The case $m=1$ is well known. 

    \vspace{.2in}
    
\noindent {\bf Case 2}: $m=2$.   

For the case $m=2$, since specifying $Z_{s,2}$ is equivalent to specifying the points and the tangent directions, and since the points are general, it would be possible to apply \cite[Theorem 4.1]{CG}. However, we will give a different argument that extends to $m=3$.

We proceed by induction on $s \geq 1$. For $s=1,2$, the result is obvious. Assume $s=3$. The claimed Hilbert function is equivalent to the claim that $Z_{3,2}$ lies on no conic. Suppose it did. First of all, there is a pencil of conics containing the first two flat fat points, so imposing passage through the third general point, say $P_3$, (without yet imposing a tangent direction), gives a unique conic. However, this has a uniquely determined tangent direction, so imposing a general tangent direction through $P_3$ means there is no such conic. This forms the base case for the induction.

Now assume we have $Z_{s,2}$, a general union of $s$ flat fat points of multiplicity 2 with generic Hilbert function, and suppose we impose vanishing at a new general flat fat point. 
Let $\sigma$ be the initial degree of the ideal of $Z_{s,2}$. If $\dim [I_{Z_{s,2}}]_\sigma =1$ then imposing passage through a general point makes $\dim [I_{Z_{s,2}}]_\sigma =0$ and imposing the tangent direction gives a new condition in degree $\sigma+1$. Otherwise $\dim [I_{Z_{s,2}}]_\sigma \geq 2$ and we get two conditions in degree $\sigma$. This concludes the proof of Case 2.

\vspace{.2in}

\noindent {\bf Case 3}: $m=3$.

We recall from the statement of \Cref{GenFlatFat} that we now also have $s \geq 3$.

In this proof we will use Bertini's theorem, so we will first show that $[I_{Z_{s,3}}]_i$ is never composite with a pencil, and that it does not have a fixed common factor except in one simple situation.

Concerning common factors we have the following.

\vspace{.1in}

\noindent \underline{Claim}: 
    {\it Let $Z = Z_{s,3}$  be a general set of flat fat points in $\mathbb P^2$, supported on the general set of points $\{ P_1,\dots,P_s \}$. Let $t$ be the initial degree of $I_Z$. Then for any integer $i \geq t$,  the elements of $[I_Z]_i$ have a common factor, $G$, if and only if $\deg G = t$ and  $\dim [I_Z]_i = \binom{i-t+2}{2}$. (In particular, $\dim [I_Z]_t = 1$ and $I_Z$ has not yet reached its second generator in degree $i$.)}

\vspace{.1in}

Recalling that both $Z_{s,1}$ and $Z_{s,2}$ have generic Hilbert functions, and that $s \geq 3$, let
\[
\begin{array}{rclllll}
a & = & \hbox{initial degree of $I_{Z_{s,1}}$} & = & \min \{ i \ | \ \binom{i+2}{2} > s \} \geq 2, \\

b & = & \hbox{initial degree of $I_{Z_{s,2}}$} & = & \min \{ i \ | \ \binom{i+2}{2} > 2s \} \geq 3, \\

t & = & \hbox{initial degree of $I_Z = I_{Z_{s,3}}$} & \leq & \min \{ i \ | \ \binom{i+2}{2} > 3s \} .

\end{array}
\]

We prove the claim via some observations.

\begin{enumerate}

\item Clearly $a \leq b \leq t$.

\item \label{unique} If $\dim [I_Z]_i = \binom{i-t+2}{2}$ then it is clear that there is a unique form (up to scalar multiplication) in the initial degree $t$, and it is a common factor in degree $i$. We have to show that this is the only way a common factor can occur.

\item \label{33} Assume that we are not in the situation of item \ref{unique}, but that the elements of $[I_Z]_i$ have a common factor, $G$. Because of the genericity of the components of $Z$, we must have uniformity: $G$ must cut out a subscheme of the same degree on all the components of $Z$. If this multiplicity is 3, then in fact $G \in I_Z$ and we are in the situation of  item \ref{unique}. Thus, without loss of generality, we have one of the following:
\vspace{.1in}

\begin{enumerate}
    \item $G \in I_{Z_{s,1}}$ but not $I_{Z_{s,2}}$, and $I_Z : G =I_{Z_{s,2}}$, and so any $F \in [I_Z]_i$ satisfies $F = GG'$ with $G' \in I_{Z_{s,2}}$;

    \vspace{.1in}

    \item $G \in I_{Z_{s,2}}$ but not in $I_Z$, and $I_Z : G = I_{Z_{s,1}}$, and so any $F \in [I_Z]_i$ satisfies $F = GG'$ with $G' \in I_{Z_{s,1}}$.
\end{enumerate}

\noindent Either way, each $F \in [I_Z]_i$ is the product of an element of $I_{Z_{s,1}}$ and an element of $I_{Z_{s,2}}$.

\item If we show that $t \leq a+b-1$, we have a contradiction, since then the elements of $[I_Z]_t$ cannot be a product of the form described in item \ref{33}. By the definition of $a, b, t$,  it is enough to show that 
\[
\binom{a+b-1 +2}{2} \ge  \binom{a+2}{2} + \binom{b+2}{2}.
\]
This is equivalent to the assertion
\[
ab - a - b - 2 \geq 0
\]
or $(a-1) b \ge a+2$.
Since $a \ge 2$ and $b \ge 3$, this is true except in the case $a=2, b=3$. Thus, by definition of $b$, we get that $s$ is either 3 or 4. However, $\binom{6}{2} = 15 > 3 \cdot 4$ implies that $t \leq 4 < a+b$ in both cases, and so again we have our contradiction.

\end{enumerate}

\noindent This completes the proof of the claim.

The argument we have just given to show that there is no common factor can be modified to show that the elements of $[I_Z]_t$ are not composite with a pencil, since any element of the pencil would have to be an element of $I_{Z_{s,1}}$ at least.

By Bertini's theorem, we conclude that the general element of $[I_Z]_t$ is irreducible except in the situation described in the claim above. We now prove \Cref{GenFlatFat}. 

We again proceed by induction. First let $s=3$. It is clear that $Z_{3,3}$ lies on a cubic curve, namely a union of three lines. We want to show that this cubic is unique. We will produce a specific union of three flat fat points of multiplicity 3 that do not lie on a pencil of cubics, so then the result will follow by semicontinuity. 

Let $C$ be a smooth cubic curve and let $P_1, P_2$ be two flex points of $C$. Let $P_3$ be a general point on $C$. The divisor $Z = 3P_1 + 3P_2 + 2P_3$  on $C$ has degree 8, and so it lies on a pencil of cubics, which contains $C$. Since $P_3$ is not a flex point of $C$, vanishing triply at $P_3$ imposes an additional condition on the pencil. Hence,  $3P_1 + 3P_2 + 3P_3$ lies on a unique cubic, namely the union of three lines. 
Alternatively, one checks, for example by using Macaulay2, that the ideal 
\[
((x+y)^3, z) \cap ((x+z)^3, y) \cap ((y-z)^3, x)
\]
is generated in degree 3 by $xyz$. 

Assume that $s \ge 3$ and $Z_{s,3}$ has generic Hilbert function. We will add a new flat fat point of multiplicity 3 to produce $Z_{s+1,3}$. Let $\sigma$ be the initial degree of $I_{Z_{s,3}}$. If $\dim [I_{Z_{s,3}}]_\sigma = 1$ then imposing a new general point $P$ brings $\dim [I_{Z_{s,3}}]_\sigma = 0$. Now, if $Y =  Z_{s,3} \cup \{ P \}$, then $\dim [I_Y]_{\sigma+1} = \sigma+2$ (it has generic Hilbert function). We claim  that not all elements of the corresponding linear system are unions of lines. Indeed, if $s=3$, it is not hard to find a union of two conics that contains $Y$. If $s \geq 4$, then the condition $\dim [I_{Z_s,3}]_\sigma = 1$ and genericity forces $s \geq 9$. Then a degree consideration in fact forces {\it no} element of the corresponding linear system to be a union of lines. For example, when $s=9$ we have $\sigma = 6$; on the other hand, by genericity, none of the lines can contain more than a degree 3 subscheme of $Z_{9,3}$ (supported at a point) so we would need nine lines for the nonreduced components plus one more for $P$.

Thus we can find an element of $[I_Y]_{\sigma+1}$ containing a component that is not a line. (In fact, one can argue that the general element is irreducible, but we do not need this.) Let $Q$ be a general point of this component. By \Cref{rem:flex}, $Q$ is not a flex of this component. Thus, imposing tangency at this point imposes a second condition, and then imposing $m=3$ adds one final condition. Thus $Z_{s+1,3}$  has generic Hilbert function. 
\end{proof}

\begin{remark}
    We believe that the statement holds more generally. More precisely, for any $m\geq 4$, we expect that $Z_{s,m}$ has generic Hilbert function if and only if $s\geq 2m-3$.\newline
    It is not difficult to see that if $s<2m-3$, then $Z_{s,m}$ does not have generic Hilbert function. Indeed, in such a case, if $Z_{s,m}$ had generic Hilbert function, it would not be contained in a curve of degree $s$; however, it lies in the union of $s$ lines.
\end{remark}

%%%%%%%%%%%%%%%%%%%%%%%%%%%%%%%%%%%%%%%%%%%%%%%%%%%%%%%%%%%%%%%%%%%%%%%%%%%%%%%%%%%%%%%%%%%%%%%

%%%%%%%%%%%%%%%%%%%%%%%%%%%%%%%%%%%%%%%%%%%%%%%%%%%%%%%%%%%%%%%%%%%%%%%

\subsection{The case of double points}

\begin{proposition} \label{Z2}
Let $C \subset \PP^3$ be a set of $r$ general (disjoint) lines and let $H$ be a general hyperplane, defined by a linear form $L$. Let $Z_2$ be the zero-dimensional scheme defined by the saturation of the ideal $I_C + (L^2)$. Then $Z_2$ is in generic position on the surface $H^2$ defined by $L^2$, {i.e., 
\[
h_{Z_2} (j) = \min \{ 2r, (j+1)^2\}. 
\]
}
\end{proposition}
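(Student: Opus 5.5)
Throughout, $Z_2$ is a union of $r$ disjoint flat fat points of multiplicity $2$: each line $\lambda_i$ meets $H^2$ in the degree-$2$ curvilinear scheme $(I_{\lambda_i}+(L^2))^{sat}$, which is supported at $P_i=\lambda_i\cap H$ and points in the direction of $\lambda_i$. Since $C$ and $L$ are general, the points $P_i$ are general in $H$ and the directions of the $\lambda_i$ are general in $\PP^3$; in particular they are transverse to $H$.

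The first step is to compare Hilbert functions. The homogeneous coordinate ring of $H^2$ is $R/(L^2)$, whose Hilbert function is $\binom{j+3}{3}-\binom{j+1}{3}=(j+1)^2$. From the exact sequence
\[
0\to [I_{Z_2:H}]_{j-1}\xrightarrow{\times L}[I_{Z_2}]_j\to [I_{Z_1|H}]_j
\]
we get
\[
h_{Z_2}(j)\ \ge\ h_{Z_1}(j)+h_{Z_2:H}(j-1).
\]
The residual $Z_2:H$ is the reduced set $\{P_1,\dots,P_r\}$, now viewed as a subscheme of $H\subset\PP^3$. Its Hilbert function is that of $r$ general points in $\PP^2$, namely $\min\{r,\binom{j+1}{2}\}$ in degree $j-1$. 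The same holds for $Z_1$, which gives $\min\{r,\binom{j+2}{2}\}$ in degree $j$.

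The upper bound $h_{Z_2}(j)\le \min\{2r,(j+1)^2\}$ is automatic. Whenever both minima are attained by the binomial terms, or both by $r$, the lower bound already matches: in the first case
\[
\binom{j+2}{2}+\binom{j+1}{2}=(j+1)^2,
\]
and in the second case the sum is $2r$. The real content lies in the middle range
\[
\binom{j+1}{2}<r<\binom{j+2}{2},
\]
where the naive bound gives only
\[
r+\binom{j+1}{2}<\min\{2r,(j+1)^2\}.
\]
This step is the main obstacle.

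In the middle range I would specialize instead of using the residual sequence. The target is to show that $[I_{Z_2}]_j$ has the expected dimension, by semicontinuity over a degeneration of the configuration. The plan is to put $r-\binom{j+1}{2}$ of the lines in a position where the trace–residual bookkeeping is balanced. I would try a two-plane Horace method: choose the support points so that exactly the right number lie on a general line $\ell\subset H$, and apply the Castelnuovo sequence \eqref{eq:castelnuovo} with respect to the surface through $\ell$. This makes the trace conditions on $\ell$ independent, and the residual conditions reduce to the generic statement for $Z_1$ and for points in $\PP^2$ in degree $j-1$.

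Alternatively, and more in the spirit of the paper, I would exploit the fact that the tangent directions are general. A form $F\in [I_{Z_2}]_j$ can be written as $F=A+LB$ modulo $L^2$, where $A$ is a form of degree $j$ on $H$ vanishing at the $P_i$, and $B$ has degree $j-1$. Vanishing on the double structure along $\lambda_i$ is then one linear condition
\[
\partial_{v_i}A(P_i)+c_i\,B(P_i)=0,
\]
where $v_i$ is the direction of $\lambda_i$ and $c_i\neq 0$. Because the $v_i$ are general, these $r$ conditions on the pair $(A,B)$ behave like $r$ general linear conditions on the space of pairs, which has dimension $\dim[I_{P}]_j+\binom{j+1}{2}$. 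This gives independence: the conditions cut the space down to $\max\{0,(j+1)^2-2r\}$.

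To make the generality claim rigorous, I would argue point by point, in the style of the induction in Case~2 of \Cref{GenFlatFat}. Adding one line at a time, the new condition is nontrivial on the current solution space unless every solution satisfies $A=0$ at the new point, or the derivative condition holds identically in the new direction. Both possibilities are excluded by choosing the point, and then its direction, generally.
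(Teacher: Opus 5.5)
Your reduction is correct. The inequality $h_{Z_2}(j)\ge h_{Z_1}(j)+h_{Z_2:H}(j-1)$ settles every degree outside the range $\binom{j+1}{2}<r<\binom{j+2}{2}$. Your encoding of $[I_{Z_2|H^2}]_j$ by pairs $(A,B)$ with $A(P_i)=0$ and $\partial_{v_i}A(P_i)+c_iB(P_i)=0$ is also right.

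From there you leave the paper's route. The paper never analyzes these conditions directly. It specializes $\alpha=\binom{t+1}{2}-r$ of the directions into $H$. The trace then consists of $r-\alpha$ points plus $\alpha$ flat double points, of total degree exactly $\binom{t+1}{2}$, and it has generic Hilbert function by Case~2 of Proposition~\ref{GenFlatFat}. The residual is $r-\alpha$ general points. In every degree either $[I_{X_1|H}]_s=0$ or $h^1(\mathcal I_{X_2}(s-1))=0$, and semicontinuity transfers the result back to $Z_2$.

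Your two-plane Horace alternative is not specified enough to check. So your proof rests on the $(A,B)$ argument, and there the decisive step is not justified. The claim that the $r$ conditions ``behave like general linear conditions'' is only a heuristic. The $i$-th functional is confined to a small space determined by $P_i$, so genericity of the $v_i$ alone does not give maximal rank.

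In your point-by-point version, the second bad case is that the derivative condition vanishes on $V'=\{(A,B)\in V_s : A(P)=0\}$ for a general direction. Since the condition is linear in $(v,c)$, this means every $(A,B)\in V'$ has $\nabla A(P)=0$ and $B(P)=0$. In other words, every member of the current system through $P$ vanishes to order two at $P$ in $\PP^3$. That is a property of the system at $P$, and no choice of direction excludes it. Generality of $P$ excludes it only via a characteristic-zero argument: in characteristic $p$, the system $\langle x^p,y^p,z^p\rangle$ is singular at every point.

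This is exactly where the middle range lives. Once $s\ge\binom{j+1}{2}$, the solutions with $A=0$ are the pairs $(0,B)$ with $B\in[I_{\{P_1,\dots,P_s\}|H}]_{j-1}=0$. So the second condition can only come from $\nabla A(P)\neq 0$. Similarly, the condition $A(P)=0$ is nontrivial only if some solution has $A\neq 0$, which you did not check.

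Both points can be filled. Let $U$ be the image of $V_s$ under $(A,B)\mapsto A$.
\begin{itemize}
\item If $s<\binom{j+1}{2}$, evaluation in degree $j-1$ at the $P_i$ is surjective, so $U=[I_{\{P_1,\dots,P_s\}|H}]_j\neq 0$. Moreover some $(0,B)\in V'$ has $B(P)\neq 0$, which makes the second condition nontrivial.
\item If $s\ge\binom{j+1}{2}$, then $V_s\cong U$. When $\dim U\ge 2$, take independent $A_1,A_2\in U$ and set $A=A_2(P)A_1-A_1(P)A_2$. If $\nabla A(P)=0$ for general $P$, then $A_2\nabla A_1=A_1\nabla A_2$ identically. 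In characteristic zero this forces $A_1/A_2$ to be constant, a contradiction.
\item The case $\dim V_s=1$ is immediate.
\end{itemize}
With this added, each new line imposes $\min\{2,\dim V_s\}$ conditions. Hence $\dim[I_{Z_2|H^2}]_j=\max\{0,(j+1)^2-2r\}$ in every degree at once, with no specialization and no appeal to Proposition~\ref{GenFlatFat}. That is a genuine simplification of the paper's argument, which in exchange reuses a trace/residual framework that extends to $L^3$. As written, however, your proposal assigns the step carrying all the content to the wrong genericity and leaves it unproved.
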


\begin{proof} 
Note that $Z_2$ has degree $2r$ by B\'ezout's theorem. We define the scheme $Z_1$ by
\[
I_{Z_1}  =  I_{Z_2} : (L) \subset R.
\]
Note that we also have 
\[
I_{Z_1 |H}  =  (I_{Z_2} + (L))^{sat}/(L) \subset R/(L) ,
\]
\[
 [I_{Z_2} + (L)]/(L) \cong I_{Z_2} / [I_{Z_2} \cap (L)] \cong  I_{Z_2} / [L \cdot [I_{Z_2} : (L) ]]
\]
and we have the exact sequence
\[
0 \rightarrow [I_{Z_2} : (L)](-1) \stackrel{\times L}{\longrightarrow} I_{Z_2} \rightarrow [I_{Z_2} + (L)]/(L) \rightarrow 0
\]
or (by sheafifying and taking cohomology)
\begin{equation} \label{usual exact seq}
0 \rightarrow I_{Z_1}(-1) \stackrel{\times L}{\longrightarrow} I_{Z_2} \rightarrow I_{Z_1 |H} \rightarrow H^1_*(\mathcal I_{Z_1})(-1) \rightarrow H^1_* (\mathcal I_{Z_2}) \rightarrow H^1_* (\mathcal I_{Z_1}) \rightarrow 0
\end{equation}
where $H^1_*$ refers to a direct sum over all twists. Here we have used the fact that if $X$ is a zero-dimensional scheme on a surface $F$ of degree $d$  in $\PP^3$ then we have the short exact sequence
\[
0 \rightarrow R(-d) \rightarrow I_X \rightarrow I_{X|F} \rightarrow 0
\]
so sheafifying and taking cohomology we have $H^1_*(\mathcal I_X) \cong H^1_*(\mathcal I_{X|F})$.

We argue by specialization. First, we give the general framework, then we give the specific details for our result.

1. Assume without loss of generality that $H$ is defined by $x_3$. Consider a component of $Z_2$ supported at a point $P$. This  component is defined by $I = (\ell_1, \ell_2, x_3^2)$. Then $I$ is determined by (i) the ideal of the point $P$, that is, $I_P = (\ell_1, \ell_2, x_3)$, and (ii) the line defined by $[I]_1$, the “tangent line,” because if $m$ is any linear form such that $(\ell_1, \ell_2, m) = (\ell_1, \ell_2, x_3)$ then 
$I = (\ell_1, \ell_2, m^2)$. 

2. Now we fix the reduced set $Z_1$ and we consider sets of $r$ lines, where for each point $P$ of $Z_1$ a line in $\mathbb P^3$ is chosen through $P$. Each such choice determines an ideal with local components as in 1. 
 Since $Z_2$ is determined by this information, it is specified by $Z_1$ plus a line through each point of $Z_1$.

We can take as our parameter space, then, the variety
\[
\underbrace{(\mathbb P^2) \times \dots \times  (\mathbb P^2)}_{\hbox{$r$ copies}}.
\]
If the line {$\lambda$ } through $P$ is general (we use $L$ to denote the linear form defining $H$), then the component of $Z_2$ supported at $P$ is defined by $I_\lambda + x_3^2$.
Thus, our $Z_2$ is a general set in the parameter space. 
However, if the line $\lambda$ is contained in $H$, i.e. $I_\lambda = (m_1, x_3)$, then the local component at $P$ is defined by 
$(m_1, x_3, m_2^2)$ with some $m_2$ chosen so that $I_P = (m_1, x_3, m_2)$. 

Our specialization, then, will come from moving general lines onto the plane $H$.

Let $t$ be the smallest integer such that $r \leq \binom{t+1}{2}$. Let $\alpha = \binom{t+1}{2} - r$. 
This means that the $h$-vector of $Z_1$ is
\[
(1,2,3,\dots, t-2, t-1, t-\alpha)
\]
where the $t-\alpha$ occurs in degree $t-1$. 
Note that since $Z_1$ can be taken to be a general set of $r$ points in $H$, $\dim [I_{Z_1 |H}]_{t-1} = \alpha$ and $t-1$ is the socle degree of the Artinian reduction of $\mathbb{K}[x,y,z]/I_{Z_1|H}$. If $r < \binom{t+1}{2}$, we also have that the initial degree of $I_{Z_1|H}$ is $t-1$; otherwise it is $t$.

For now assume that $r < \binom{t+1}{2}$.

Now we specialize. Choose any $\alpha$ of the nonreduced points of $Z_2$ and move the ``direction" as above so that it is a general direction on $H$.

Let $Y_2$ be the resulting scheme. Note $Y_2$ still lies in the surface $H^2$. 
We now define two related schemes in $H$:

\begin{itemize}
    \item First, we define $X_1$ by 
    $I_{X_1} = ( I_{Y_2} + (L))^{sat}$. We also have
    \[
    I_{X_1|H} = [[I_{Y_2} + (L)]/(L)]^{sat}.
    \]
It defines the union of  $r-\alpha$ general points of $H$ and $\alpha$ general nonreduced, length 2 schemes on $H$, for a total degree of $r+\alpha = \binom{t+1}{2}$.
    By Lemma \ref{GenFlatFat}, generic multiplicity~$2$ flat fat points impose independent conditions in $H$.  Hence the $h$-vector of $X_1$ is
    \[
(1,2,3,\dots, t-2, t-1, t).
\]

    \item Second, the scheme $X_2$ defined by $I_{X_2} = I_{Y_2} : (L)$ consists of (hence has the Hilbert function of) $r-\alpha$ general points in  $\PP^2 = H$.

\end{itemize}

\noindent The bottom line is that compared to the original scheme $Z_2$, here the zero-dimensional scheme on $H$ rises to degree $r+\alpha = \binom{t+1}{2}$ (the degree increases by $\alpha$), while the residual scheme drops in degree by $\alpha$. 

We will consider two cases, which we list first and then make the computations separately.

\vspace{.2in}

\noindent \underline{Case 1}: $\alpha \leq \frac{t}{2}$.

In this case the $h$-vectors of the ``residual" scheme defined by $ I_{X_2} = I_{Y_2} : (L)$, and that of the scheme defined by $I_{X_1} = [I_{Y_2} +(L)]^{sat}$ are
\[
(1,2,3,\dots, t-1,t-2\alpha) \ \ \hbox{ and } \ \ (1,2,3,\dots, t-1, t)
\]
where the last entries are both in degrees $t-1$.

\vspace{.2in}

\noindent \underline{Case 2}: $\alpha > \frac{t}{2}$.

In this case, for the residual scheme we have to reduce the total degree by $\alpha$ (compared to $Z_1$). Since $\alpha = (t-\alpha) + (2\alpha - t)$, we remove from the $h$-vector of $Z_1$ the $t-\alpha$ in degree $t-1$ plus an additional $(2\alpha-t)$ in degree $t-2$ to obtain the $h$-vector 
\[
(1,2,3, \dots, t-2, (t-1) - (2\alpha -t) ) = (1,2,3,\dots, t-2, 2t-2\alpha -1)
\]
for $X_2$, and we still have 
\[
(1,2,3,\dots, t)
\]
for $X_1$.
Here the last nonzero entries are in degrees $t-2$ and $t-1$ respectively. 

 For any integer $s$, we have the exact sequence
\begin{equation} \label{double2}
0 \rightarrow [I_{X_2}]_{s-1} \rightarrow [I_{Y_2 }]_s \rightarrow [I_{X_1 |H}]_s \rightarrow H^1(\mathcal I_{X_2}(s-1)) \rightarrow H^1 (\mathcal I_{Y_2}(s)) \rightarrow H^1 (\mathcal I_{X_1}(s)) \rightarrow 0.
\end{equation}

\noindent We observe that

\begin{itemize}

\item[(i)] $\dim [I_{X_1|H}]_s = 0$ for $s \leq t-1$.

\item[(ii)] $h^1(\mathcal I_{X_1}(s)) = 0$ for $s \geq t-1$.

\item[(iii)] In Case 1, $h^1(\mathcal I_{X_2} (s-1)) = 0$ for $s \geq t$.

\item[(iv)] In Case 2, $h^1(\mathcal I_{X_2} (s-1)) = 0$ for $s \geq t-1$.
\end{itemize}

We begin with Case 1. From exactness of (\ref{double2}) and (ii), (iii) we conclude

\begin{enumerate}

\item 
\[
\left.
\begin{array}{l}
h^1(\mathcal I_{X_2}(t-1)) = 0 \\
h^1(\mathcal I_{X_1}(t)) = h^1(\mathcal I_{X_1}(t-1)) = 0
\end{array}
\right \}
\Rightarrow
h^1(\mathcal I_{Y_2}(s)) = 0 \hbox{ for } s \geq t.
\]

\item $\dim [I_{Y_2}]_s = \dim [I_{X_2}]_{s-1}$ for $0 \leq s \leq t-1$, from (i).
    
\end{enumerate}

Since the $h$-vector of $X_2$ is
\[
(1,2,3,\dots,t-1,t-2\alpha)
\]
but now we consider $X_2$ as being in $\mathbb P^3$, we have for $s \leq t-1$ that 
\begin{equation} \label{dimY2}
\dim [I_{Y_2}]_s = \dim [I_{X_2}]_{s-1} =  \binom{s+1}{3} 
\end{equation}
(the ideal coincides with the ideal of the plane). 
Thus for $s \leq t-1$,
\[
h_{Y_2}(s) = \binom{s+3}{3} - \dim [I_{Y_2}]_s = (s+1)^2.
\]
This means that the $h$-vector of ${Y_2}$ has the form
\[
(1,3,5,7,\dots)
\]
until degree $t-1$. On the other hand, conclusion 1. above shows that the $h$-vector of $Y_2$ ends in degree at most $t$. Thus $Y_2$ has generic Hilbert function in the surface $H^2$.

We now turn to Case 2, {where $\alpha > \frac{t}{2}$.} Now we have (again from the exactness of (\ref{double2}))

\begin{enumerate}

    \item 
    \[
    \left. 
\begin{array}{l}
h^1(\mathcal I_{X_2}(t-2)) = 0 \Rightarrow h^1(\mathcal I_{X_2}(t-1)) = 0 \\
h^1(\mathcal I_{X_1}(t)) = h^1(\mathcal I_{X_1}(t-1)) = 0
\end{array}
\right \}
\Rightarrow h^1(\mathcal I_{Y_2}(s)) = 0 \hbox{ for } s \geq t
    \]
    (using (ii) and (iv)).

    \item $\dim [I_{Y_2}]_s = \dim [I_{X_2}]_{s-1}$ for $0 \leq s \leq t-1$.
    
\end{enumerate}

This information is identical to that of Case 1, so the argument is also identical and (\ref{dimY2}) again holds.

Now, in both cases $Y_2$ is a specialization of $Z_2$, so for any $s$ we have by semicontinuity
\[
\dim [I_{Y_2}]_s \geq \dim [I_{Z_2}]_s .
\]
But both lie on the surface $H^2$, whose ideal has dimension $\binom{s+1}{3}$ in degree $s$. We have, using (\ref{dimY2}) for $s \leq t-1$, 
\[
\binom{s+1}{3} = \dim [I_{Y_2}]_s \geq \dim [I_{Z_2}]_s \geq \binom{s+1}{3}.
\]
Thus the original scheme $Z_2$ also has generic Hilbert function in $H^2$, since the $h$-vector ends in degree $t$.

Finally, when we introduced our specialization, we left open the case where $r = \binom{t+1}{2}$. In this case, no specialization is needed! $X_1$ already has $h$-vector $(1,2,\dots,t)$ and the argument goes as above.
\end{proof}

As an immediate corollary of what we have shown, we state:

\begin{corollary} \label{cor: genHF}
    
    The scheme $Z_2$ has the  $h$-vector 
    \[
    (1,3,5,7,\dots, 2t-1,\alpha)
    \]
    where $1 \leq \alpha \leq 2t+1$. In particular, we have that $H^0(\mathcal I_{Z_2 |H^2}(k))$ and $H^1(\mathcal I_{Z_2}(k))$ can never both be nonzero, for any $k$.  
\end{corollary}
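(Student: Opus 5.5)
The corollary should follow by translating the conclusion of \Cref{Z2} into $h$-vector language and then applying \Cref{h0h1=0}.

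\textbf{Step 1: compute the $h$-vector.} The surface $H^2$ is defined by $L^2$, so its $h$-vector as a subscheme of $\PP^3$ is $(1,2,1)$ convolved with the polynomial-ring growth. Concretely, $h_{H^2}(j) = \binom{j+3}{3} - \binom{j+1}{3} = (j+1)^2$, and the first difference of this is $2j+1$. By \Cref{Z2}, $h_{Z_2}(j) = \min\{2r,(j+1)^2\}$. Taking first differences, $\Delta h_{Z_2}(j) = 2j+1$ as long as $(j+1)^2 \le 2r$. Now let $t$ be the largest integer with $t^2 \le 2r$, and set $\alpha = 2r - t^2$. Then the $h$-vector is
\[
(1,3,5,\dots,2t-1,\alpha),
\]
with $\alpha$ in degree $t$ and zeros afterwards. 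We have $0 \le \alpha < (t+1)^2 - t^2 = 2t+1$. If $\alpha = 0$, the vector simply ends at $2t-1$, which is the degenerate form of the statement; one can reindex, or note that the statement's range $1 \le \alpha \le 2t+1$ covers this case after renaming $t$. Finally, the $h$-vector sums to $t^2 + \alpha = 2r = \deg Z_2$.

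\textbf{Step 2: the vanishing statement.} By \Cref{Z2}, $Z_2$ is in generic position on $H^2$, which is a surface of degree $2$ defined by a single form. So \Cref{h0h1=0} applies directly and gives $h^0(\mathcal I_{Z_2|H^2}(k))\cdot h^1(\mathcal I_{Z_2|H^2}(k)) = 0$ for all $k$. We then use the identification $H^1(\mathcal I_{Z_2}(k)) \cong H^1(\mathcal I_{Z_2|H^2}(k))$, which was noted after sequence \eqref{usual 2}: the cohomology of $\mathcal O_{\PP^3}$ vanishes in the relevant range. This gives the claim.

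There is no real obstacle here. The only care needed is the bookkeeping of the last entry of the $h$-vector, namely choosing $t$ so that the final value lies in the stated range. One should also observe that the two vanishings really are complementary in degrees: $h^0$ vanishes for $k \le t$ when $\alpha < 2t+1$, and $h^1$ vanishes once $h_{Z_2}(k) = 2r$, that is for $k \ge t$. In degree $t$ these two conditions meet exactly as \Cref{h0h1=0} guarantees.
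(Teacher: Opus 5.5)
Your proposal is correct and follows essentially the paper's argument: the $h$-vector is read off from \Cref{Z2}, and the vanishing statement is the generic-position dichotomy, which the paper states inline and you obtain by citing \Cref{h0h1=0} together with $H^1(\mathcal I_{Z_2}(k))\cong H^1(\mathcal I_{Z_2|H^2}(k))$. There are two harmless slips. First, the $h$-vector of $R/(L^2)$ is $(1,1)$, not $(1,2,1)$, although your explicit formula $(j+1)^2$ is right. Second, in your closing remark, $\dim[I_{Z_2|H^2}]_t=(t+1)^2-2r=2t+1-\alpha$, so $H^0(\mathcal I_{Z_2|H^2}(k))$ vanishes for all $k\le t$ only when $\alpha=2t+1$ (otherwise only for $k\le t-1$); hence the two vanishing ranges are complementary rather than overlapping at $k=t$.
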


\begin{proof}
    It is an immediate consequence of Proposition \ref{Z2}.
    The last sentence comes from the fact that 
    \[
   \min \{ t | H^1(\mathcal I_{Z_2}(t)) = 0 \} =  \min \{ t | H^1(\mathcal I_{Z_2|H^2}(t)) = 0 \} 
\]
comes at the end of the $h$-vector, which is where $I_{Z_2}$ begins to have elements that are not in $I_{H^2}$.
\end{proof}

\begin{corollary}\label{SLP2}
    If $C \subset \mathbb P^3$ is a general set of skew lines then for a general linear form $L$ and any integer $s$,
    \[
    \times L^2 \colon [M(C)]_{s-2} \rightarrow  [M(C)]_s
    \]
     has maximal rank.
\end{corollary}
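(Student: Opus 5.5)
We use the exact sequence \eqref{usual 3} with $m=2$:
\[
0 \rightarrow [I_C]_{s-2} \stackrel{\times L^2}{\longrightarrow} [I_C]_s \rightarrow [I_{Z_2 | H^2}]_s \stackrel{\delta}{\longrightarrow} [M(C)]_{s-2} \stackrel{\times L^2}{\longrightarrow} [M(C)]_s \rightarrow H^1(\mathcal I_{Z_2|H^2}(s)) \rightarrow H^1(\mathcal I_C(s)) \rightarrow \cdots
\]
Here $H^1(\mathcal I_C(s))=[M(C)]_s$. The tail of the sequence is obtained from the cohomology of the Castelnuovo sequence \eqref{eq:castelnuovo}, with $X=C$ and the degree-$2$ hypersurface $H^2$. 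Since $L$ is general, $L$ is a nonzerodivisor on $R/I_C$, so $I_C : L^2 = I_C$ and the residual scheme is $C$ itself. The next term after $[M(C)]_s$ is $H^1(\mathcal I_{Z_2|H^2}(s))$, and by the discussion after \eqref{usual 2} this equals $H^1(\mathcal I_{Z_2}(s))$.

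The map $\times L^2\colon [M(C)]_{s-2}\to[M(C)]_s$ is injective exactly when $\delta=0$. By exactness, this holds whenever $[I_{Z_2|H^2}]_s=0$. It is surjective whenever $H^1(\mathcal I_{Z_2|H^2}(s))=0$, because its cokernel injects into that group.

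Now fix any integer $s$. By Proposition \ref{Z2}, $Z_2$ is in generic position on $H^2$, since the lines in $C$ are general and $L$ is general. So \Cref{cor: genHF}, or equivalently Lemma \ref{h0h1=0} applied with $X=H^2$ (a surface of degree $2$, so $I_X\cong R(-2)$), gives
\[
h^0(\mathcal I_{Z_2|H^2}(s))\cdot h^1(\mathcal I_{Z_2|H^2}(s))=0 .
\]
One small check is needed here. The group $[I_{Z_2|H^2}]_s$ appearing in \eqref{usual 3} is the degree-$s$ piece of the saturated ideal of $Z_2$ in $R/(L^2)$. Since $R/(L^2)$ is ACM, this coincides with $H^0(\mathcal I_{Z_2|H^2}(s))$. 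The sequence \eqref{usual 2} also identifies its dimension with $\dim[I_{Z_2}]_s-\binom{s+1}{3}$.

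Hence, in each degree $s$, one of the two vanishings holds. If $H^0(\mathcal I_{Z_2|H^2}(s))=0$, then $\times L^2$ is injective. If $H^1(\mathcal I_{Z_2|H^2}(s))=0$, then $\times L^2$ is surjective. Either way it has maximal rank, which proves the corollary.

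The substantive input is Proposition \ref{Z2}, which we may assume. The only point requiring care is verifying that the terms of \eqref{usual 3} flanking $[M(C)]_{s-2}\to[M(C)]_s$ are exactly $H^0$ and $H^1$ of $\mathcal I_{Z_2|H^2}(s)$. This uses the vanishing of $H^1$ and $H^2$ of $\mathcal O_{\PP^3}(s-2)$ and the residual identification $C:H^2=C$, which holds because $L$ is general.
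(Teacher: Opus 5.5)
Your proof is correct and is essentially the paper's argument. The paper also reads off maximal rank of $\times L^2$ from the long exact sequence \eqref{usual 3} with $m=2$, using Corollary~\ref{cor: genHF} (that is, Proposition~\ref{Z2} combined with Lemma~\ref{h0h1=0}) to ensure that $[I_{Z_2|H^2}]_s$ and $H^1(\mathcal I_{Z_2}(s))$ are never both nonzero; you simply spell out the identifications (residual $C:H^2=C$ and $H^1(\mathcal I_{Z_2|H^2}(s))\cong H^1(\mathcal I_{Z_2}(s))$) that the paper leaves implicit.
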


\begin{proof}
  It follows immediately from the long exact sequence
  \[
  0 \rightarrow [I_C]_{s-2} \stackrel{\times L^2}{\longrightarrow} [I_C]_s \rightarrow [I_{Z_2 | H^2}]_s \rightarrow [M(C)]_{s-2} \stackrel{\times L^2}{\longrightarrow} [M(C)]_s \rightarrow H^1(\mathcal I_{Z_2} (s)) \rightarrow \dots
  \]
  and Corollary \ref{cor: genHF}.
\end{proof}

%%%%%%%%%%%%%%%%%%%%%%%%%%%%%%%%%%%%%%%%%%%%%%%%%%%%%%%%%%%%%%%%%%%%%%%%%%%%%%%%%%%%%%%%%

\subsection{The case of flat fat points of multiplicity three}

The goal of this subsection is to prove the following theorem, and then to show that if $C$ is a set of general lines then $M(C)$ has SLP in range 3.

\begin{theorem} \label{Z3}
Let $C \subset \PP^3$ be a set of $r$ general (disjoint) lines and let $H$ be a general hyperplane, defined by a linear form $L$. Let $Z_3 \subset \PP^3$ be the zero-dimensional scheme defined by the saturation of the ideal $I_C + (L^3)$. Then $Z_3$ is in generic position on the surface $H^3$ defined by $L^3$, {i.e. 
\[
h_{Z_3} (j) = \min \left\{3 r, 3 \binom{j+1}{2} +1 \right\}. 
\]
} 
\end{theorem}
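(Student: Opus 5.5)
I will mimic the proof of \Cref{Z2}, now with two residual steps, specializing a suitable number of the lines into the plane $H$. The ideal of $H^3$ has dimension $\binom{j}{3}$ in degree $j$, so $h_{H^3}(j)=\binom{j+3}{3}-\binom{j}{3}=3\binom{j+1}{2}+1$. The $h$-vector of $H^3$ is therefore $(1,3,6,9,12,\dots)$, i.e. $3j$ in degree $j\ge 1$. It suffices to produce a specialization $Y_3$ of $Z_3$, still contained in $H^3$, such that $\dim[I_{Y_3}]_s=\binom{s}{3}$ for $s$ up to the degree $t$ where the $h$-vector must end, and $h^1(\mathcal I_{Y_3}(s))=0$ from that degree on. Semicontinuity, exactly as at the end of the proof of \Cref{Z2}, then transfers both conditions to $Z_3$.

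\textbf{Local structure and the specialization.} Each component of $Z_3$ is $(\ell_1,\ell_2,x_3^3)$, which is determined by a point $P\in H$ and a line $\lambda$ through $P$. The parameter space is again $(\mathbb P^2)^r$. If $\lambda\subset H$, the component becomes $(m_1,x_3,m_2^3)$, a flat fat point of multiplicity $3$ lying inside $H$. I will specialize some number $\beta$ of the $r$ lines into $H$ in general directions, and call the resulting scheme $Y_3$.

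\textbf{Residual filtration.} The Castelnuovo sequence \eqref{eq:castelnuovo} with respect to $L$ gives
\[
0\to \mathcal I_{Y_3: L}(-1)\to \mathcal I_{Y_3}\to \mathcal I_{Y_3\cap H, H}\to 0 .
\]
Here the trace $X_1=Y_3\cap H$ is the union of $r-\beta$ general points and $\beta$ general flat fat points of multiplicity $3$ in $H$, of degree $r+2\beta$. The residual $Y_3:L$ is a scheme of type $Z_2$: $r-\beta$ general lines cut by $H^2$. By \Cref{Z2} and \Cref{cor: genHF}, this residual is in generic position on $H^2$, with $h$-vector $(1,3,5,\dots)$ truncated at degree $2(r-\beta)$.

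For $X_1$ I need an extension of \Cref{GenFlatFat}: a general union of simple points and multiplicity-$3$ flat fat points in $\mathbb P^2$ (with $\beta\ge 3$) has generic Hilbert function. I would prove this by first applying \Cref{GenFlatFat} to the fat points and then adding general simple points, which always imposes independent conditions. Small $\beta$ can be handled by a separate check, or by also specializing some lines to give multiplicity-$2$ fat points in $H$.

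\textbf{Choosing $\beta$.} I would choose $\beta$ so that $\deg X_1=r+2\beta$ fills the plane $h$-vector exactly through degree $t-1$, namely $r+2\beta=\binom{t+1}{2}$ or as close as possible. At the same time the residual should satisfy $\deg(Y_3:L)=2(r-\beta)\le (t)^2$ with $h^1(\mathcal I_{Y_3:L}(s-1))=0$ for $s\ge t$. Chasing the long exact sequence as in (\ref{double2}) then gives $h^1(\mathcal I_{Y_3}(s))=0$ for $s\ge t$. It also gives $[I_{Y_3}]_s\cong[I_{Y_3:L}]_{s-1}=[L^2R]_{s-1}$ in the low degrees, which yields $\dim[I_{Y_3}]_s=\binom{s}{3}$.

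\textbf{Main obstacle.} I expect the main obstacle to be the arithmetic. Since $\beta$ must be an integer and a $3$-fat point adds two to the plane degree, parity issues arise; this is analogous to Cases 1 and 2 ($\alpha\le t/2$ versus $\alpha>t/2$) in \Cref{Z2}. My fix is to allow a mix: some lines specialized to multiplicity-$3$ fat points in $H$, and one line specialized so that it meets $H$ to order $2$. Alternatively, I would treat the cases separately, and verify small $r$ (where $\beta<3$ forces a different argument) directly, for instance with Macaulay2.
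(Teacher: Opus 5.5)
Your architecture matches the paper's proof. You move some components of $Z_3$ into $H$ with general directions. You then use the residual sequence whose trace $X_1$ consists of $r-\beta$ general points and $\beta$ flat fat points of multiplicity $3$ (degree $r+2\beta$), and whose residual is the $Z_2$-scheme of the $r-\beta$ unmoved lines (generic on $H^2$ by \Cref{Z2}). Semicontinuity finishes.

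\textbf{The gap.} The step that carries the argument is the existence of a suitable $\beta$. You leave it open as the ``main obstacle,'' and both your criterion and your fallback are off. Define $t$ by $\binom{t}{2}<r\le\binom{t+1}{2}$. What you need is not ``$r+2\beta=\binom{t+1}{2}$ or as close as possible'' but
\[
\binom{t+1}{2}\le r+2\beta\le\binom{t+2}{2}\qquad\text{and}\qquad (t-1)^2\le 2(r-\beta)\le t^2 .
\]
These four bounds give, in order:
\begin{itemize}
\item $[I_{X_1|H}]_s=0$ for $s\le t-1$;
\item $h^1(\mathcal I_{X_1}(s))=0$ for $s\ge t$;
\item $[I_{Y_3:L}]_{s-1}=[L^2R]_{s-1}$ for every $s\le t-1$;
\item $h^1(\mathcal I_{Y_3:L}(s-1))=0$ for $s\ge t$.
\end{itemize}
The lower bound $(t-1)^2\le 2(r-\beta)$ is absent from your sketch, and without it ``in the low degrees'' need not reach degree $t-1$.

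Your proposed remedy is also not available. A component of $Z_3$ either stays transverse to $H$ (trace a reduced point, residual of length $2$) or is moved into $H$ (trace of length $3$, empty residual). So no specialization of a line produces a trace of length $2$ or a multiplicity-$2$ flat fat point in $H$. The paper settles this step with \Cref{compare soc deg} and the case split (I), (II a), (II b), according to whether $Z_2$ has the same socle degree as $Z_1$ or one more. The paper's $t$ is your $t-1$.

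\textbf{Closing the step.} It closes directly, and there is no parity issue. Write $r=\binom{t}{2}+a$ with $1\le a\le t$. The four inequalities become
\[
\max\{t-a,\ 2a-t\}\ \le\ 2\beta\ \le\ \min\{2t+1-a,\ t+2a-1\}.
\]
Each upper bound exceeds each lower bound by one of $t+1$, $3t+1-3a$, $3a-1$, $2t-1$, all at least $1$. Since the lower bound is $\ge 0$, the window contains two consecutive nonnegative integers, hence an admissible even value of $2\beta$.

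For genericity of $X_1$:
\begin{itemize}
\item When $\beta\ge 3$, \Cref{GenFlatFat} plus general simple points gives it.
\item When $\beta\in\{1,2\}$, the fat points fail to impose independent conditions only in degree $\beta$, where they lie on a unique curve (the union of their supporting lines). The bound $2(r-\beta)\ge(t-1)^2$ guarantees at least one simple point once $t\ge 2$, and that general point repairs exactly that degree.
\end{itemize}
The only exception is $r=1$, and there the statement itself fails: $Z_3$ is collinear, so $h_{Z_3}(1)=2<3$. With this computation inserted, your proof is complete and coincides with the paper's.
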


We prove Theorem \ref{Z3} in this subsection. The strategy will be to find a different scheme $X_3$ that is a specialization of $Z_3$ and also has generic Hilbert function on a cubic surface. This means that the ideal of $X_3$ has the smallest possible dimension in each degree, so the ideal for $Z_3$ must also be minimal.

Note that $Z_3$ has degree $3r$ by B\'ezout's theorem. We define the scheme $Z_2$ by
\[
I_{Z_2}  =  I_{Z_3} : (L) \subset R
\]
and the scheme $Z_1 \subset \PP^3$ by $I_{Z_1} = (I_C + (L))^{sat}$. 
Note that we also have 
\[
I_{Z_1 |H}  =  (I_{Z_3} + (L))^{sat}/(L) \subset R/(L) ,
\]
\[
 [I_{Z_3} + (L)]/(L) \cong I_{Z_3} / [I_{Z_3} \cap (L)] \cong  I_{Z_3} / [L \cdot [I_{Z_3} : (L) ]]
\]
and we have the exact sequence
\[
0 \rightarrow [I_{Z_3} : (L)](-1) \stackrel{\times L}{\longrightarrow} I_{Z_3} \rightarrow [I_{Z_3} + (L)]/(L) \rightarrow 0
\]
or (by sheafifying and taking cohomology)
\begin{equation}\label{exact seq fat3}
0 \rightarrow I_{Z_2}(-1) \stackrel{\times L}{\longrightarrow} I_{Z_3} \rightarrow I_{Z_1 |H} \rightarrow H^1_*(\mathcal I_{Z_2})(-1) \rightarrow H^1_* (\mathcal I_{Z_3}) \rightarrow H^1_* (\mathcal I_{Z_1}) \rightarrow 0
\end{equation}
where $H^1_*$ refers to a direct sum over all twists. Here we have used the fact that if $X$ is a zero-dimensional scheme on a surface $F$ of degree $d$  in $\PP^3$ then we have the short exact sequence
\[
0 \rightarrow R(-d) \rightarrow I_X \rightarrow I_{X|F} \rightarrow 0
\]
so sheafifying and taking cohomology we have $H^1_*(\mathcal I_X) \cong H^1_*(\mathcal I_{X|F})$.

We argue by specialization. First we give the general framework, then we provide the specific details for our result.

1. Assume without loss of generality that $H$ is defined by $x_3$. Consider a component of $Z_3$ supported at a point $P$. This  component is defined by $I = (\ell_1, \ell_2, x_3^3)$. Then $I$ is determined by (i) the ideal of the point $P$, that is, $I_P = (\ell_1, \ell_2, x_3)$,  (ii) the line defined by $[I]_1$, the “tangent line,” and (iii) the fact that the multiplicity is 3 (a third condition as noted above).

2. Now we fix the reduced set $Z_1$ and we consider sets of $r$ lines, where for each point $P$ of $Z_1$ a line in $\mathbb P^3$ is chosen through $P$. Each such choice determines an ideal with local components as in 1. 
 Since $Z_3$ is determined by this information, it is specified by $Z_1$ plus a line through each point of $Z_1$.

We can take as our parameter space, then, the variety
\[
\underbrace{(\mathbb P^2) \times \dots \times  (\mathbb P^2)}_{ \hbox{$r$ copies}}.
\]
If the line $\lambda$ through $P$ is general, the local component at $P$ is defined by $I_\lambda + x_3^3$.
Thus, our $Z_3$ is a general set in the parameter space. 
However, if the line $\lambda$ is contained in $H$, i.e., $I_\lambda = (m_1, x_3)$, then the local component at $P$ is defined by 
$(m_1, x_3, m_2^3)$ with $m_2$ chosen so that $I_P = (m_1, x_3, m_2)$. 

Our specialization, then, will come from moving general lines onto the plane $H$. We will also add general reduced points to this specialization, as described next. Before doing so, we give an example to illustrate the need for specialization.

\begin{example} \label{useful ex}
    Let $C$ be a set of 29 general lines in $\mathbb P^3$ and let $Z_1, Z_2, Z_3$ be as defined above. By what we have seen above, we know the Hilbert functions of $Z_1$ and $Z_2$. Their $h$-vectors are $(1,2,3,4,5,6,7,1)$ and $(1,3,5,7,9,11,13,9)$ respectively.  From these we can compute the Hilbert function and hence the dimensions of the components of the ideals.
    
    The exact sequence (\ref{exact seq fat3}) gives the following table of dimensions for each degree $t$. We only give the values that are important for our calculation.
{\scriptsize
\[
\begin{array}{ccccccccccccccccc}
0 & \rightarrow & [I_{Z_2}]_{t-1} & \stackrel{\times L}{\longrightarrow} & [I_{Z_3}]_t & \rightarrow & [I_{Z_1 |H}]_t & \rightarrow & H^1(\mathcal I_{Z_2}(t-1))  & \rightarrow & H^1 (\mathcal I_{Z_3}(t)) & \rightarrow & H^1 (\mathcal I_{Z_1}(t)) & \rightarrow \\
t=0 && 0 &&&& 0   \\
t=1 && 0 &&&& 0  \\
t=2 && 0 &&&& 0 \\
t=3 && 1 &&&& 0 \\
t=4 && 4 &&&& 0 \\
t=5 && 10 &&&& 0 \\
t=6 && 20 &&&& 0 \\ 
t=7 && 35 &&&& 7 && 9 &&&& 0 \\
t=8 && 62 && && 16 && 0
\end{array}
\]
}
As a result, we can immediately obtain the value of the Hilbert function of $Z_3$ in degrees $\leq 6$ and degree 8. Specifically, we have
\[
h_{Z_3} = (1,4, 10, 19, 31, 46, 64, ??, 87,87, \ldots)
\]
or $h$-vector $(1,3,6,9,12,15,18,a,b)$ where $a+b=23$. 

The central issue is that in degree $\ 7$, exactness allows for different values of $\dim [I_{Z_3}]_7$ and $h^1(\mathcal I_{Z_3}(7))$. We seek a specialization $X_3$ for which, in each degree $t$, either $\dim [I_{{X_1 |H}}]_t = 0$ or $h^1(\mathcal I_{X_2} (t-1)) = 0$;  this guarantees that we can compute $\dim [I_{X_3}]_t$ in the specialization, and if it is the general value that we seek, then $I_{Z_3}$ will also have the desired Hilbert function.  
\end{example}

We also have the following relations between the $h$-vectors of $Z_1$ and of $Z_2$ in general. Recall that the socle degree is the last degree in which the $h$-vector is nonzero.

\begin{lemma} \label{compare soc deg}
    Assume that $Z_1$ has socle degree $t$. Write $(h_i)$ for the $h$-vector of $Z_1$ and $(k_i)$ for the $h$-vector of $Z_2$. Then 
    
    \begin{itemize}
        \item[(i)] $Z_2$ has socle degree either $t$ or $t+1$.

        \item[(ii)] If the socle degree of $Z_2$ is $t$ then 
        \[
        k_t \geq t+1 \ \ \hbox{ and } \ \ 
2h_t +t = k_t.
\]

\item[(iii)] If the socle degree of $Z_2$ is $t+1$ then
\[
k_{t+1} = 2h_t -t-1 .
\]
    \end{itemize}
\end{lemma}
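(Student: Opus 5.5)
The plan is a direct computation. By Proposition~\ref{Z2} and Corollary~\ref{cor: genHF}, both $h$-vectors are known explicitly, and we simply compare degrees. First, $Z_1$ is a general set of $r$ points in the plane $H$ (viewed in $\PP^3$), so its $h$-vector is that of general points in $\PP^2$:
\[
(h_0,\dots,h_t) = (1,2,3,\dots,t,h_t), \qquad 1\le h_t\le t+1 .
\]
Here we use the hypothesis that the socle degree of $Z_1$ is $t$, so that $h_t\neq 0$. Summing gives
\[
r=\binom{t+1}{2}+h_t, \qquad\text{hence}\qquad 2r=t(t+1)+2h_t .
\]

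Second, $Z_2$ has degree $2r$ and is in generic position on $H^2$ by Proposition~\ref{Z2}. Its $h$-vector therefore has the form $(1,3,5,\dots,2i+1,\dots)$, with each entry equal to $2i+1$ until the total $2r$ is exhausted; the last entry is the remainder. Since $\sum_{i=0}^{t-1}(2i+1)=t^2$, the amount left over after degree $t-1$ is
\[
2r-t^2 = t+2h_t .
\]
The bounds $1\le h_t\le t+1$ give $t+2 \le t+2h_t \le 3t+2$. In particular this leftover is positive, so the socle degree of $Z_2$ is at least $t$.

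Now split into two cases according to whether the leftover fits in degree $t$, where the generic entry is $2t+1$.
\begin{itemize}
\item If $t+2h_t\le 2t+1$, then $Z_2$ has socle degree $t$ and $k_t=2h_t+t$. Moreover $k_t\ge t+2\ge t+1$, which gives (ii).
\item If $t+2h_t>2t+1$, then $k_t=2t+1$ and the remainder $k_{t+1}=t+2h_t-(2t+1)=2h_t-t-1$ lies in degree $t+1$. This remainder is positive by the case assumption, and it is at most $t+1\le 2t+3$, so it fits in degree $t+1$ and the socle degree is exactly $t+1$. This gives (iii).
\end{itemize}
Since the leftover is at most $3t+2=(2t+1)+(t+1)$, no further degrees are needed, which proves (i).

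There is no real obstacle here. The only care needed is bookkeeping: matching the socle-degree indexing with the position of $h_t$, treating the boundary case $t+2h_t=2t+1$ correctly (it falls in the socle-degree-$t$ case), and checking that the remainder in the second case is both positive and at most the generic value $2t+3$.
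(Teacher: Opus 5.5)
Your proposal is correct and follows essentially the same route as the paper. Both arguments use the generic $h$-vectors of $Z_1$ (general points in $H$) and of $Z_2$ (Proposition~\ref{Z2}), write $2r = t(t+1)+2h_t$, and compare this with the partial sums $t^2$ and $(t+1)^2$ of $(1,3,5,\dots)$; your case split on the leftover $t+2h_t$ versus $2t+1$ simply spells out the paper's inequality chain and its ``similar argument'' for (iii).
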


\begin{proof}

    The choice of $t$ means that since $Z_1$ has $h$-vector $(1,2,3,\dots,t,b)$ for some $b>0$ and $r = \deg Z_1$,
    \[
   1 + 2 + \dots + t = \frac{t^2+t}{2} = \binom{t+1}{2} < r \leq \binom{t+2}{2} = \frac{t^2+3t+2}{2}.
    \]
    This means that 
    \begin{equation} \label{ineqs}
    t^2 < t^2+t < 2r = \deg Z_2 \leq t^2+3t+2 = (t+1)^2 +t+1 < (t+2)^2.
    \end{equation}
    The $h$-vector of $Z_2$ has the form $(1,3,5,7,\dots, 2t+1,\dots)$ (Proposition \ref{Z2}) and the sum of the entries up to degree $t$ is $(t+1)^2$. The first strict inequality in (\ref{ineqs}) means that the socle degree of $Z_2$ is at least $t$, and the last inequality shows that the socle degree is at most $t+1$. This proves (i). 

    For (ii), note from above
    \[
  1+3+5+ \dots + (2t-1) =  t^2 < t^2+t < 2r = \deg Z_2 \leq (t+2)^2, 
    \]
    and so the $h$-vector of $Z_2$ has the form $(1,3,5,\dots,2t-1,k_t)$, where $k_t$ satisfies 
 $t+1 \leq k_t \leq 2t+1$ {as $\deg Z_2 = t^2 + k_t > t^2 + t$ by Inequality \eqref{ineqs}.}
Then we have
\[
2 \cdot \deg Z_1 = 2 \binom{t+1}{2} + 2h_t = \deg Z_2 = t^2 + k_t, 
\]
from which the second part of (ii) follows.

Part (iii) follows by a similar argument.
\end{proof}

We now describe our specialization. As in the previous subsection, it consists of moving a predetermined number of components of $Z_3$ to the plane $H$. In a moment, we will specify how many such components will be moved, but first we make some general observations.

\renewcommand{\theenumi}{\roman{enumi}}
\renewcommand{\labelenumi}{(\theenumi)}

\begin{enumerate}
    \item Each such component of $Z_3$ is a flat fat point of multiplicity 3 on the corresponding line.

    \item We specialize certain components to $H$ so that the new direction is general in $H$. By 
Lemma \ref{GenFlatFat}, the resulting union of flat fat points in $H$ has generic Hilbert function. 

\item The other lines meet $H$ in general points of $H$, so the union of these (reduced) points with the new flat fat points in $H$ still has generic Hilbert function. Let $X_1$ be this (general) union of simple points and multiplicity 3 flat fat points.

\item \label{obs4} Let $X_3$ be the new union of flat fat point schemes {obtained from $Z_3$}, after moving a certain number of components to $H$. Let $k$ be the number of moved components, so $r-k$ components have not moved. Let $X_2$ be defined by $I_{X_2} = I_{X_3} : L$, where $L$ is the linear form defining $H$. Then $X_2$ is the union of $r-k$ flat fat points in $\mathbb P^3$ of multiplicity 2 -- the components that were moved disappear from $X_2$ since they lie entirely on $H$.

\item $I_{X_1|H} = \frac{{(I_{X_3} + (L))^{sat}}}{(L)}. $

\item Each time that we specialize a component {of $Z_3$}, the degree of $X_1$ increases by 2 and the degree of $X_2$ decreases by 2. The degree of $X_3$ remains unchanged.
\end{enumerate}

Now we describe the process of deciding how many components we want to specialize to $H$. Assume that the socle degree of $Z_1$ is $t$, as in Lemma \ref{compare soc deg}, and the $h$-vector of $Z_1$ in degree $t$ is $h_t$. Note $h_t \leq t+1$. There are two cases.

\vspace{.2in}

\noindent {\bf (I.)}  If the socle degree of $Z_2$ is also $t$, then the $h$-vector of $Z_2$ in degree $t$ is $k_t \geq t+1$ and $k_t = 2h_t + t$ (by Lemma \ref{compare soc deg}). We specialize enough points so that $h_t$ rises to $t+1$ (i.e. $[I_{X_1}]_t = 0$). To do this, we have to fill a gap of $(t+1) - h_t$ in steps of 2, and so in the worst case the degree from $Z_1$ to $X_1$ goes up by $(t+2) - h_t$. But note that
\[
k_t - (t+2-h_t) = 2h_t + t - (t+2-h_t) = 3h_t-2 > 0. 
\]
Thus, we are assured that after specializing enough components so that $h_t$ rises to $t+1$, the socle degree of $X_2$ remains $t$.

\vspace{.2in}

\noindent {\bf (II.)} If the socle degree of $Z_2$ is $t+1$,  the $h$-vector of  $Z_2$ (before any specialization) is of the form 
\[
(1,3,5,7,9,\dots,2t+1, k_{t+1}).
\]
{We proceed in two steps.} 
For step 1, we specialize enough lines so that the $h$-vector of $Z_2$ goes to zero in degree $t+1$, i.e. we specialize $\lceil \frac{k_{t+1}}{2} \rceil$ points of $Z_2$. The degree of $Z_1$ has gone up to get $X_1$ by the same amount that $Z_2$ went down to get $X_2$. There are two possibilities.

\vspace{.2in}

\noindent {\bf (II a.)} If the $h$-vector of $X_1$ has $h_t = t+1$, then we stop.

\vspace{.2in}

\noindent {\bf (II b.)} It may happen that the $h$-vector of $X_1$ in degree $t$ is less than $t+1$. We first illustrate this with an example. 

 \begin{example}
     Let $C$ be a general set of 25 lines in $\mathbb P^3$. The $h$-vector of $Z_1$ is
     \[
     (1,2,3,4,5,6,4)
     \]
     and that of $Z_2$ is 
     \[
     (1,3,5,7,9,11,13,1).
     \]
     Then for step 1 we only specialize one component, giving $X_1$ and $X_2$ with $h$-vectors
     \[
     (1,2,3,4,5,6,6) \ \ \hbox{ and } \ \ (1,3,5,7,9,11,12).
     \]
As we saw in Example \ref{useful ex} looking at the line where $t=7$, we want to avoid a situation where $\dim [I_{X_1}]_t$ and $h^1(\mathcal I_{Z_2}(t-1))$ are both nonzero. Here it fails when $t=6$. The solution, step 2, is to specialize one more component. The new $h$-vectors are 
     \[
     (1,2,3,4,5,6,7,1) \ \ \hbox{ and } \ \ (1,3,5,7,9,11,10), 
     \]
and we have avoided the stated situation.
 \end{example}

Thus, step 2 in general is to specialize additional components so that the $h$-vector of $X_1$ has the value $t+1$ in degree $t$. Note that, in making these specializations, $h_t$ must rise by at most $t$ (from 1 to $t+1$). Moreover, under our assumption that the socle degree of $Z_2$ is $t+1$,  the socle degree of $X_2$ is at least $t$, since $k_{t+1} + 2t + 1 > 2 ( \lceil \frac{k_{t+1}+1}{2} \rceil + \lceil \frac{t+1 - h_t}{2} \rceil)$.
Thus, after these specializations, we have 
\begin{itemize}
    \item $h^1(\mathcal I_{X_2}(t)) = 0$;
    \item $h^1(\mathcal I_{X_1}(t+1)) = 0$;
    \item $\dim [I_{X_1|H}]_t = 0$.
\end{itemize}  

At this point we can make some conclusions about  $X_3$ using the long exact sequence (\ref{exact seq fat3}).

\renewcommand{\theenumi}{\arabic{enumi}}
\renewcommand{\labelenumi}{\theenumi.}

\begin{enumerate}
    \item For every degree $s \leq t$, we have 
    \[
    \dim [I_{X_3}]_s = \dim [I_{X_2}]_{s-1} = \dim [R]_{s-3}
    \]
    since $I_{X_2}$ is equal to the principal ideal $(L^2)$ in degrees $\leq t-1$ and we have a shift of~1. Thus $I_{X_3}$ agrees with the ideal $(L^3)$ in degrees $\leq t$.

    \item $h^1(\mathcal I_{X_1}(t+1)) = h^1(\mathcal I_{X_2}(t)) = 0$ so $h^1(\mathcal I_{X_3}(t+1)) = 0$ by exactness.
\end{enumerate}

\noindent These two facts imply that $X_3$ has generic Hilbert function on a surface of degree 3.

The last step is to show that $Z_3$ also has generic Hilbert function on a surface of degree~3. But the ideal $I_{X_3}$ has the smallest possible dimension in each degree. Since $X_3$ is a specialization of $Z_3$, the same is true of  $Z_3$. This concludes the proof of Theorem \ref{Z3}.
\smallskip

As an immediate corollary of what we have shown, we state:

\begin{corollary} \label{cor: genHF3}
    
    The scheme $Z_3$ has the  $h$-vector 
    \[
    (1,3,6,9,12,\dots, 3t,\alpha)
    \]
    where $1 \leq \alpha \leq 3(t+1)$. In particular, $H^0(\mathcal I_{Z_3 |H^3}(k))$ and $H^1(\mathcal I_{Z_3}(k))$ can never both be nonzero, for any $k$.  
\end{corollary}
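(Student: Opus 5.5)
This follows from \Cref{Z3} in the same way \Cref{cor: genHF} followed from \Cref{Z2}. First, read off the $h$-vector. The ideal of $H^3$ is $(L^3)$, so $h_{H^3}(j) = \binom{j+3}{3}-\binom{j}{3} = 3\binom{j+1}{2}+1$, and its $h$-vector is $(1,3,6,9,12,\dots)$: entry $3j$ in degree $j\ge 1$.

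By \Cref{Z3}, $h_{Z_3}(j)=\min\{3r, h_{H^3}(j)\}$. Let $t$ be the largest degree with $h_{Z_3}(t)=h_{H^3}(t)<3r$. Then the $h$-vector of $Z_3$ agrees with $(1,3,6,\dots,3t)$ through degree $t$. In degree $t+1$ it equals $\alpha = 3r - h_{H^3}(t)$, and it vanishes afterwards. By the choice of $t$ we have $\alpha\ge 1$, and since $h_{Z_3}(t+1)\le h_{H^3}(t+1)$ we get $\alpha\le 3(t+1)$. The degenerate case $3r = h_{H^3}(j)$ exactly is handled in the same way by letting the last entry be the full $3(t+1)$. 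Small $r$ fits the same pattern, with the obvious reading of the list.

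Next, compare the two cohomology groups. The sequence $0\to R(-3)\to I_{Z_3}\to I_{Z_3|H^3}\to 0$, and its sheafification, gives two facts:
\[
\dim [I_{Z_3|H^3}]_k = h_{H^3}(k)-h_{Z_3}(k), \qquad h^1(\mathcal I_{Z_3}(k)) = h^1(\mathcal I_{Z_3|H^3}(k)) = 3r-h_{Z_3}(k).
\]
By the generic position statement, for each $k$ either $h_{Z_3}(k)=h_{H^3}(k)$, which kills the first, or $h_{Z_3}(k)=3r$, which kills the second. This is precisely \Cref{h0h1=0} applied to $X=H^3$, which is defined by a cubic form. So the two groups are never both nonzero.

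The only step needing any care is the bookkeeping of the endpoint of the $h$-vector and the bounds on $\alpha$. The real content is already in \Cref{Z3}, so I expect no genuine obstacle here.
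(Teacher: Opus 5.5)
Your proposal is correct and is essentially the paper's argument: you read the $h$-vector off Theorem~\ref{Z3}, and the dichotomy is \Cref{h0h1=0} applied to the cubic surface $H^3$, which the paper phrases as saying that $H^1(\mathcal I_{Z_3}(k))$ vanishes from exactly the end of the $h$-vector, the degree where $I_{Z_3}$ first acquires forms outside $(L^3)$. One caveat applies equally to the paper: Theorem~\ref{Z3} tacitly needs $r\ge 2$, because for $r=1$ the scheme $Z_3$ is a collinear scheme of length $3$ with $h$-vector $(1,1,1)$ and both groups are nonzero in degree $1$, so your remark that small $r$ fits the pattern should exclude $r=1$ (harmless, since then $M(C)=0$).
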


\begin{proof}

    It is an immediate consequence of Theorem \ref{Z3}.
    The last sentence comes from the fact that 
    \[
   \min \{ t \mid H^1(\mathcal I_{Z_3}(t)) = 0 \} =  \min \{ t \mid H^1(\mathcal I_{Z_3|H^3}(t)) = 0 \} 
\]
comes at the end of the $h$-vector, which is where $I_{Z_3}$ begins to have elements that are not in $I_{H^3}$.
\end{proof}

\begin{corollary} \label{SLP3}
    If $C \subset \mathbb P^3$ is a general set of $r$ skew lines then for a general linear form $L$ and any integer $s$,
    \[
    \times L^3 \colon [M(C)]_{s-3} \rightarrow  [M(C)]_s
    \]
     has maximal rank.
\end{corollary}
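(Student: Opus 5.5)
The plan is to copy the proof of \Cref{SLP2}, with \Cref{cor: genHF3} playing the role of \Cref{cor: genHF}. Take $m=3$ in the long exact sequence (\ref{usual 3}):
\[
0 \rightarrow [I_C]_{s-3} \stackrel{\times L^3}{\longrightarrow} [I_C]_s \rightarrow [I_{Z_3 | H^3}]_s \rightarrow [M(C)]_{s-3} \stackrel{\times L^3}{\longrightarrow} [M(C)]_s \rightarrow H^1(\mathcal I_{Z_3|H^3}(s)) \rightarrow \cdots
\]
Here $Z_3$ is the saturation of $I_C+(L^3)$. Since $C$ is a union of general lines and $L$ is general, \Cref{Z3} applies to $Z_3$.

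Everything now rests on the last sentence of \Cref{cor: genHF3}: for each $s$, either $[I_{Z_3|H^3}]_s = 0$ or $H^1(\mathcal I_{Z_3}(s))=0$.

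To use this, recall that the sheafified sequence $0\to\mathcal O(-3)\to\mathcal I_{Z_3}\to\mathcal I_{Z_3|H^3}\to 0$ gives $H^1(\mathcal I_{Z_3}(s)) \cong H^1(\mathcal I_{Z_3|H^3}(s))$. This identification was already noted in the paper.

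Now fix $s$ and split into two cases.

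\textbf{Case $[I_{Z_3|H^3}]_s=0$.} Exactness at $[M(C)]_{s-3}$ shows that the kernel of $\times L^3$ is the image of a zero space. So $\times L^3\colon [M(C)]_{s-3}\to[M(C)]_s$ is injective.

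\textbf{Case $H^1(\mathcal I_{Z_3|H^3}(s))=0$.} Exactness at $[M(C)]_s$ shows that $\times L^3$ is surjective.

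In either case the map has maximal rank, which proves the statement.

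The only point needing care is that the general linear form $L$ in the statement must be one for which \Cref{Z3} holds. Theorem \ref{Z3} gives this for a general $H$, and the maximal-rank condition is open in $L$ for each of the finitely many relevant $s$, so both requirements hold for a general $L$ simultaneously. Beyond this, the main difficulty lies entirely in \Cref{Z3}, which we are allowed to assume here.
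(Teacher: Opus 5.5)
Your proposal is correct and follows the paper's proof: the paper also derives the corollary directly from the long exact sequence (\ref{usual 3}) with $m=3$ and the last sentence of \Cref{cor: genHF3}, and your two-case exactness argument just spells out what the paper calls immediate. The openness remark about $L$ is harmless but not needed, since the argument already gives maximal rank for every $L$ for which \Cref{Z3} holds.
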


\begin{proof}
  It follows immediately from the long exact sequence
  \[
  0 \rightarrow [I_C]_{s-3} \stackrel{\times L^3}{\longrightarrow} [I_C]_s \rightarrow [I_{Z_3 | H^3}]_s \rightarrow [M(C)]_{s-3} \stackrel{\times L^3}{\longrightarrow} [M(C)]_s \rightarrow H^1(\mathcal I_{Z_3} (s)) \rightarrow \dots
  \]
  and Corollary \ref{cor: genHF3}.
\end{proof}

We believe that the pattern shown in this section continues:

\begin{conjecture}
    If $C$ is a general set of $r$ skew lines in $\mathbb P^3$ then $M(C)$ has SLP.
\end{conjecture}

%%%%%%%%%%%%%%%%%%%%%%%%%%%%%%%%%%%%%%%%%%%%%%%%%%%%%%%%%%%%%%%%%%%%%%%

\section{Skew lines on a quadric surface, and related sets}
\label{sec4}

In this section, we explore Lefschetz properties of unions of skew lines in $\PP^3$ when all or most of the lines lie on a quadric surface.

\subsection{Unions of lines on a smooth quadric surface}

Let $C$ be a set of skew lines on a quadric surface $Q$. We first give some results about $M(C)$, and then make slight changes to $C$ (allowing one or two lines to be off the quadric) and see how this changes the behavior with respect to WLP.

\begin{lemma} 
The Hilbert function of $M(C)$ is symmetric.
\end{lemma}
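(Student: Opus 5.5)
The plan is to compute $M(C)$ directly from the geometry of the quadric and to read off the symmetry. We may take $Q$ smooth, since a cone carries no two skew lines. Then $Q \cong \PP^1\times\PP^1$, and skew lines on $Q$ all lie in one ruling, so $C$ is a divisor of type $(a,0)$ on $Q$, where $a=r$ is the number of lines. Because $Q$ is ACM, the sequence
\[
0 \rightarrow \mathcal O_{\PP^3}(t-2) \rightarrow \mathcal I_C(t) \rightarrow \mathcal I_{C|Q}(t) \rightarrow 0
\]
together with $H^1(\mathcal O_{\PP^3}(j))=H^2(\mathcal O_{\PP^3}(j))=0$ gives $H^1(\mathcal I_C(t)) \cong H^1(Q,\mathcal O_Q(t-a,t))$.

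Next, compute this $H^1$ by the K\"unneth formula:
\[
H^1(\mathcal O_Q(t-a,t)) = H^0(\mathcal O_{\PP^1}(t-a))\otimes H^1(\mathcal O_{\PP^1}(t)) \oplus H^1(\mathcal O_{\PP^1}(t-a))\otimes H^0(\mathcal O_{\PP^1}(t)).
\]
The first summand vanishes unless $t\ge a$ and $t\le -2$, which is impossible. The second summand is nonzero exactly for $0\le t\le a-2$, where it has dimension $(a-t-1)(t+1)$. Hence
\[
\dim [M(C)]_t = (t+1)(a-1-t) \quad \text{for } 0\le t\le a-2,
\]
and $\dim [M(C)]_t = 0$ otherwise.

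The substitution $t\mapsto a-2-t$ interchanges the two factors $t+1$ and $a-1-t$, so the Hilbert function is symmetric about $(a-2)/2$.

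A more conceptual alternative is Serre duality on $Q$: since $\omega_Q=\mathcal O_Q(-2,-2)$, we get $H^1(\mathcal O_Q(t-a,t))^\vee \cong H^1(\mathcal O_Q(a-t-2,-t-2))$. Composing with the automorphism of $Q$ swapping the rulings, and with the identification of a type $(0,a)$ divisor with a curve of the same type as $C$ (the residual under a complete intersection with $a$ planes…), becomes messier, so I will stick with the K\"unneth computation.

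The only point needing care is the identification $H^1(\mathcal I_C(t))\cong H^1(\mathcal I_{C|Q}(t))$ together with $\mathcal I_{C|Q}=\mathcal O_Q(-a,0)$, and getting the ruling convention consistent with $\mathcal O_Q(1)=\mathcal O_Q(1,1)$. Beyond that I expect no real obstacle.
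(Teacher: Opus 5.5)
Your proof is correct and complete, but it follows a different route from the paper's. The identification $H^1(\mathcal I_C(t))\cong H^1(\mathcal O_Q(t-a,t))$ and the K\"unneth count give $\dim[M(C)]_t=(t+1)(a-1-t)$ for $0\le t\le a-2$. This matches what the paper uses later: $M(C)$ lives in degrees $0,\dots,r-2$ and $\dim[M(C)]_0=r-1$.

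\textbf{The paper's route.} It links $C$, via $Q$ and a surface of degree $r$, to $r$ skew lines in the other ruling. Linkage replaces $M(C)$ by its $\mathbb K$-dual up to a twist, so the linked curve $C'$ satisfies $\dim[M(C')]_t=\dim[M(C)]_{r-2-t}$. Since $C'$ is again $r$ lines in a ruling of $Q$, the paper concludes symmetry.

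\textbf{What each route buys.} Your computation gives an explicit formula, hence unimodality with the peak at $\lfloor (r-2)/2\rfloor$, and needs no liaison theory. It also supplies the fact the paper uses tacitly: the Hilbert function of $M$ depends only on $r$, so $C'$ has the same one as $C$. The paper's argument is shorter and yields a module-level statement: $M(C)^\vee$, suitably twisted, is again the Hartshorne--Rao module of such a curve. The next proposition and Remark~\ref{enough} exploit exactly this, for symmetric Betti numbers and for deducing surjectivity from injectivity on the dual. Your dimension count alone does not give that.

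\textbf{The Serre duality variant you abandoned.} It is not messy; it closes in one line. Put $s=a-2-t$. Then $\mathcal O_Q(a-t-2,-t-2)=\mathcal O_Q(s,s-a)$, which after swapping the factors of $\PP^1\times\PP^1$ is $\mathcal O_Q(s-a,s)$. Hence $\dim[M(C)]_t=\dim[M(C)]_{a-2-t}$ directly. This is the sheaf-theoretic counterpart of the paper's linkage argument.

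\textbf{A small slip.} A pair of planes can contain two skew lines, so ``a cone carries no two skew lines'' does not by itself reduce to smooth $Q$. This is harmless: for $r\ge 3$ the quadric is forced to be smooth, two skew lines always lie on some smooth quadric, and the section assumes $Q$ smooth anyway.
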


\begin{proof}
    This is clear since $C$ lies entirely in one ruling of $Q$, and is directly linked to a set of the same number of skew lines in the other ruling, so the dimensions of the components of $M(C)$ are symmetric thanks to the behavior of $M(C)$ under linkage (see \cite{MBook}).
\end{proof}

In fact, we have the free resolution of $M(C)$:

\begin{proposition}
    Let $C \subset \PP^3$ be a set of $r$ skew lines on a smooth quadric surface. Then $M(C)$ has a minimal free resolution of the form
    \[
0 \rightarrow R(-r-2)^{r-1} \rightarrow R(-r-1)^{2r} \rightarrow 
\begin{array}{c}
R(-2)^{r+1} \\
\oplus \\
R(-r)^{r+1}
\end{array}
\rightarrow R(-1)^{2r} \rightarrow R^{r-1} \rightarrow M(C) \rightarrow 0.
    \]
    In particular, the resolution is symmetric,  as is the Hilbert function of $M(C)$ itself. The socle is concentrated in the last degree, and $M(C)$ is generated in its initial degree.
\end{proposition}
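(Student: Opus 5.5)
The plan is to compute $M(C)$ completely on $Q\cong \PP^1\times\PP^1$, show that it is generated in its initial degree, compute the start of the resolution directly, and get the rest from the self-duality coming from linkage together with the Hilbert series.

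\emph{Step 1 (the module).} After choosing the ruling, $C$ is a divisor of bidegree $(r,0)$ on $Q$, so $\mathcal I_{C|Q}\cong \mathcal O_Q(-r,0)$. Since $Q$ is ACM, $H^1(\mathcal I_C(t))\cong H^1(\mathcal O_Q(t-r,t))$. By K\"unneth this equals $H^1(\mathcal O_{\PP^1}(t-r))\otimes H^0(\mathcal O_{\PP^1}(t))$ for $t\ge 0$, since the other K\"unneth summand vanishes there. Hence $[M(C)]_t\neq 0$ exactly for $1\le t\le r-1$, with
\[
\dim [M(C)]_t=(r-1-t)(t+1).
\]
This is symmetric about $(r-2)/2$ and gives $\dim [M(C)]_1=r-1$ and $\dim [M(C)]_{r-1}$-type values at the two ends as required. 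The $R$-module structure factors through the Segre map: a linear form acts as an element of $H^0(\mathcal O_Q(1,1))=V_x\otimes V_y$, where $V_x,V_y$ are the two spaces of linear forms on $\PP^1$.

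\emph{Step 2 (generation in initial degree, socle in last degree).} The multiplication map
\[
H^0(\mathcal O_{\PP^1}(1))\otimes H^1(\mathcal O_{\PP^1}(a))\to H^1(\mathcal O_{\PP^1}(a+1))
\]
is surjective, because by Serre duality it is dual to the injective map $H^0(\mathcal O(-a-3))\to H^0(\mathcal O(1))^*\otimes H^0(\mathcal O(-a-2))$. The multiplication $H^0(\mathcal O(1))\otimes H^0(\mathcal O(t))\to H^0(\mathcal O(t+1))$ is also surjective. Combining the two, $R_1\otimes [M]_t\to[M]_{t+1}$ is onto for every $t$, so $M(C)$ is generated by its $r-1$ elements in degree $1$; this gives the term $R^{r-1}$ after the shift normalizing the initial degree to $0$. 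Linking $C$ by $Q$ and a surface of degree $r$ yields $r$ lines of the other ruling, so $M(C)^\vee$ is a shift of a module of the same shape. Therefore the minimal resolution is self-dual up to shift, and generation in the initial degree dualizes to the socle being concentrated in the last degree.

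\emph{Step 3 (Betti numbers).} I expect this to be the main obstacle: showing that the first syzygies are exactly $R(-1)^{2r}$, all linear, and that the second module is $R(-2)^{r+1}\oplus R(-r)^{r+1}$ with nothing else. I would first compute the linear strand through Koszul homology $\operatorname{Tor}^R_i(M,\mathbb K)$ in the lowest degrees, using the explicit bigraded description $M\cong \bigoplus_t H^1(\mathcal O(t-r))\otimes S^tV_y$ as a module over $\operatorname{Sym}(V_x\otimes V_y)$. The kernel of $R_1\otimes M_0\to M_1$ has dimension $4(r-1)-2\cdot (r-2)\cdot\ldots$ determined by the Hilbert function, and I would check, using the $\mathrm{SL}_2\times\mathrm{SL}_2$-equivariant structure, that this kernel generates all relations in degree $2$ and beyond except those forced in degree $r$. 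By self-duality, the first two terms determine the last two. The middle term is then pinned down by the alternating sum of the Hilbert series, which fixes the multiplicities $r+1$ in degrees $2$ and $r$ once minimality rules out cancellation; for $r\ge 4$ these degrees are distinct, and the cases $r\le 3$ are checked by hand. If the Koszul computation turns out to be too messy, I would instead resolve $I_C$ directly as the mapping cone of $0\to R(-2)\to I_C\to I_{C|Q}\to 0$ and read off the resolution of $M(C)$ via the standard relation between the resolutions of $I_C$ and $M(C)$ for curves (Rao).
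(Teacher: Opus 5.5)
Your Steps 1 and 2 are sound, and they give a legitimate alternative to the paper's derivation of the Hilbert function and of generation in the initial degree. There is one indexing slip: your own formula $(r-1-t)(t+1)$ shows that $[M(C)]_t\neq 0$ exactly for $0\le t\le r-2$, with $\dim [M(C)]_0=r-1$. So $M(C)$ already starts in degree $0$, and no shift is needed to get the term $R^{r-1}$.

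The genuine gap is Step 3, which you flag but do not close. Everything there rests on $F_1=R(-1)^{2r}$, i.e.\ on the first syzygy module of $M(C)$ being generated by its linear syzygies. Your surjectivity plus the Hilbert function give only $\beta_{1,1}=4(r-1)-2(r-2)=2r$. They cannot rule out extra minimal first syzygies in some degree $j\ge 2$ paired with extra second syzygies of the same degree, since such pairs cancel in the Hilbert series. Without knowing $F_1$ exactly, the duality step does not determine $F_3$ and $F_4$, and then the Hilbert series does not determine $F_2$.

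The sentence about checking this with the $\mathrm{SL}_2\times\mathrm{SL}_2$-structure is a hope, not an argument. Also, ``except those forced in degree $r$'' is off: the degree-$r$ generators in the claimed resolution are second syzygies, not first. The fallback via Rao's correspondence does not let you ``read off'' the resolution of $M(C)$ either. It only identifies $F_4$ with the last term of the minimal resolution of $I_C$ and shows $F_3$ is a direct summand of the next term, and you would still have to compute the resolution of $I_C$.

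The idea the paper uses, which makes linearity of the first syzygies automatic, is to resolve $M(C)$ through the exact sequence
\[
0\to R/I_C\to \bigoplus_{i=1}^r R/I_{\lambda_i}\to M(C)\to 0,
\]
which comes from (\ref{usual 1}) because $H^0_*(\mathcal O_C)=\bigoplus_i R/I_{\lambda_i}$ for disjoint lines. Take the mapping cone of the Koszul resolutions $R\leftarrow R(-1)^2\leftarrow R(-2)$ of the lines against a resolution $R\leftarrow R(-2)\oplus R(-r)^a\leftarrow B\leftarrow A$ of $R/I_C$. After cancelling one copy of $R$ in degree $0$, this gives a resolution
\[
R^{r-1}\leftarrow R(-1)^{2r}\leftarrow R(-2)^{r+1}\oplus R(-r)^a\leftarrow B\leftarrow A
\]
of $M(C)$. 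For degree reasons nothing can cancel against $R(-1)^{2r}$, so $F_1=R(-1)^{2r}$. The linkage duality from your Step 2 then forces $A=R(-r-2)^{r-1}$ and $B=R(-r-1)^{2r}$. A rank count gives $a=r+1$, and minimality is again clear from the degrees. With this step in place of your Koszul-homology plan, the rest of your argument goes through.
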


\begin{proof}
    From the exact sequence (\ref{usual 1}) we get a short exact sequence
    \[
    0 \rightarrow R/I_C \rightarrow \bigoplus^r H^0_*(\mathcal O_{\PP^1}) \rightarrow M(C) \rightarrow 0
    \]
    where $H^0_*$ refers to a direct sum over all twists. Note that $\bigoplus^r H^0_*(\mathcal O_{\PP^1})$ is a direct sum of copies of $\mathbb K[x,y]$ or $R/I_\lambda$, where $\lambda$ is a line in $\PP^3$.

    Because $C$ is linked by a complete intersection of type $(2,r)$ to a set of $r$ skew lines in the other ruling of the quadric $Q$, we compute that $M(C)$ begins in degree 0 and ends in degree $r-2$ (using the standard relation between the Hartshorne-Rao modules of linked schemes -- see, for instance, Theorem 5.3.1 of \cite{MBook}). Also, the above exact sequence shows that $M(C)$ is generated in its initial degree and $\dim [M(C)]_0 = r-1$. Finally, although we do not claim that $M(C)$ is self-dual up to shift, it is true that the dual module (shifted) is the Hartshorne-Rao module of a set of $r$ skew lines on $Q$ (in the other ruling), so the Betti numbers will be symmetric and we only have to compute the first half.

We begin by writing what we know about minimal free resolutions of the modules in the above exact sequence:
\[
\begin{array}{cccccccccccc}
&& 0 \\
&& \downarrow \\
&& A && 0 \\
&& \downarrow && \downarrow \\
&& B && R(-2)^r \\
&& \downarrow && \downarrow \\
&& R(-2) \oplus R(-r)^a && R(-1)^{2r} \\
&& \downarrow && \downarrow \\
&& R && R^r \\
&& \downarrow && \downarrow \\
0 & \rightarrow & R/I_C & \rightarrow & \bigoplus^r (R/I_\lambda) & \rightarrow & M(C) & \rightarrow & 0 \\
&& \downarrow && \downarrow \\
&& 0 && 0
\end{array}
\]
where $A$ and $B$ are free modules and $a$ is a positive integer. We suppress the horizontal arrows. We know that $M(C)$ starts in degree 0 and has dimension $r-1$, so one copy of $R$ splits off. Then, by a mapping cone, $M(C)$ has free resolution
\[
0 \rightarrow A \rightarrow B \rightarrow 
\begin{array}{c}
R(-2)^{r+1} \\
\oplus \\
R(-r)^a
\end{array}
\rightarrow R(-1)^{2r} \rightarrow R^{r-1} \rightarrow M(C) \rightarrow 0.
\]
Now, the fact that the $\mathbb K$-dual of $M(C)$ is again the Hartshorne-Rao module of a set of $r$ skew lines  on $Q$ (beginning in degree 0 and ending in degree $r-2$) means that $A = R(-r-2)^{r-1}$ and $B = R(-r-1)^{2r}$. A rank calculation gives $a = r+1$. There is clearly no possible splitting, so the resolution is minimal and we are done.
\end{proof}

\begin{remark} \label{enough}
    If $C$ is a set of $r$ skew lines on a quadric surface, then the socle of $M(C)$ is supported in the last degree of $M(C)$ (namely $r-2$). It follows that checking for WLP can be done by checking only for injectivity, and only from degree $\lfloor \frac{r-2}{2} \rfloor -1$ to degree $\lfloor \frac{r-2}{2} \rfloor$. The surjectivity comes from the fact that the dual module also corresponds to $r$ skew lines on $Q$ (in the other ruling). Equivalently (and possibly more easily) it suffices to show surjectivity from degree $\lfloor \frac{r-2}{2} \rfloor$ to degree $\lfloor \frac{r-2}{2} \rfloor +1$, and then injectivity will follow. See, for instance, \cite[Proposition 2.6]{BMMN}  and the paragraph preceding it for the (similar) Gorenstein situation.
\end{remark}
\begin{proposition}\label{prop:lines quadric}
    Let $C$ be a set of $r$ skew lines on a smooth quadric surface. Then $M(C)$ has the WLP.
\end{proposition}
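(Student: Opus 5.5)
We would prove \Cref{prop:lines quadric} by computing $M(C)$ explicitly as a module and then invoking a representation-theoretic argument. Identify the smooth quadric $Q$ with $\PP^1\times\PP^1$, with coordinates $x,y$ on the first factor and $u,v$ on the second, and assume the $r$ lines are of type $(1,0)$. Then $\mathcal I_{C|Q}\cong \mathcal O_Q(-r,0)$. Since $Q$ is ACM, the sequence $0\to \mathcal O_{\PP^3}(-2)\to \mathcal I_C\to \mathcal I_{C|Q}\to 0$ gives
\[
[M(C)]_t \cong H^1(\mathcal O_Q(t-r,t)) \cong H^1(\PP^1,\mathcal O(t-r))\otimes H^0(\PP^1,\mathcal O(t)).
\]
This has dimension $(r-t-1)(t+1)$ for $0\le t\le r-2$, in agreement with the resolution above. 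The $R$-module structure is induced by restricting forms to $Q$. A general linear form restricts to a general $(1,1)$-form, which after a change of coordinates on the two factors we may take to be $xu+yv$.

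Next we make the module concrete. Write $H^1_*(\mathcal O_{\PP^1})$ as the inverse system $\mathbb K[X,Y]$, on which $x,y$ act as the contractions $\partial_X,\partial_Y$. Up to a shift, $[M(C)]_t$ becomes the space of bihomogeneous polynomials of bidegree $(r-2-t,\,t)$ in $(X,Y;u,v)$. Multiplication by $L$ becomes the polarization operator
\[
D = u\,\partial_X + v\,\partial_Y,
\]
which maps bidegree $(n,t)$ to bidegree $(n-1,t+1)$. We must show that $D$ has maximal rank in each degree. By \Cref{enough} it even suffices to check this once near the middle degree, but the argument below gives all degrees at once.

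For the key step, let $V=\mathbb K^2$ and let $GL(V)$ act on $(X,Y)$ and on $(u,v)$ simultaneously. The operator $D$ is $GL(V)$-equivariant; this is the classical polarization operator. In characteristic zero, Pieri/Clebsch--Gordan gives
\[
S^nV\otimes S^tV\cong \bigoplus_{j=0}^{\min(n,t)} S_{(n+t-j,\,j)}V,
\]
and each irreducible occurs with multiplicity one. The target decomposes in the same way with $j\le \min(n-1,t+1)$. By Schur's lemma, $D$ maps the $j$-th isotypic piece to the $j$-th piece and is either zero or an isomorphism there. Hence maximal rank follows once we know that $D$ is nonzero on every component $S_{(n+t-j,j)}$ that occurs in both source and target. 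If we can show that, then $D$ is injective when $n\le t+1$ and surjective when $n-1\le t$, which covers every degree.

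The main obstacle is this nonvanishing. We would handle it with the adjoint operator $D^*=X\partial_u+Y\partial_v$ and the Capelli-type commutation relation
\[
[D,D^*] = E_X - E_u,
\]
where $E_X$ and $E_u$ are the Euler operators in $X,Y$ and in $u,v$; this acts as the scalar $n-t$ on bidegree $(n,t)$. Together with $E_X+E_u$, the operators $D$, $D^*$, $[D,D^*]$ form an $\mathfrak{sl}_2$-triple. Standard $\mathfrak{sl}_2$ theory in characteristic zero then says that $D$ is injective on weight spaces of weight $n-t\ge 0$ above the middle and surjective below it, which is the hard Lefschetz pattern. This yields maximal rank in all degrees and hence WLP, in fact even SLP.

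A simpler fallback is to check directly that the highest weight vector $(Xv-Yu)^j X^{n-j}u^{t-j}$ of the $j$-th component is not annihilated by $D$. Indeed $D(Xv-Yu)=0$, so $D$ sends this vector to $(n-j)\,(Xv-Yu)^jX^{n-j-1}u^{t-j+1}$, which is nonzero whenever $j\le n-1$.
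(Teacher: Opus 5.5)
Your proposal is correct, and it argues quite differently from the paper. The paper never computes $M(C)$. It uses the sequence \eqref{usual 3} with $m=1$ and observes that for $t\le \frac{r-2}{2}$ one has $[I_C]_t=Q\cdot R_{t-2}$ and $[I_{C\cap H|H}]_t=Q'\cdot S_{t-2}$. The alternating sum of the first three terms therefore vanishes, so $\times L$ is injective up to the middle degree. It then appeals to \Cref{enough} (socle concentrated in degree $r-2$, plus duality with $r$ lines of the other ruling) to get maximal rank in all degrees.

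You instead determine the module completely: $M(C)\cong H^1_*(\mathcal O_Q(-r,0))\cong\bigoplus_t S^{r-2-t}\langle X,Y\rangle\otimes S^t\langle u,v\rangle$, with $L$ acting as the polarization operator $D$. Maximal rank then comes from $GL_2$ and $\mathfrak{sl}_2$ representation theory. The paper's route is shorter and stays within the ideal-theoretic tools used throughout, but it only controls $\times L$ and leans on \Cref{enough}. Yours is self-contained and shows that $M(C)$ depends only on $r$. Moreover, $D^k$ has maximal rank between weight spaces of any finite-dimensional $\mathfrak{sl}_2$-module, so your argument proves SLP outright. That answers affirmatively the question the paper leaves open in its final section. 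Via the liaison argument of \Cref{curve on quadric}, it also gives SLP for every curve on a smooth quadric.

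There are a few slips to fix, none of which affects the conclusion.

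First, the Schur-lemma bookkeeping. The source has components $0\le j\le \min(n,t)$ and the target has $0\le j\le\min(n-1,t+1)$. Given nonvanishing on common components, $D$ is injective iff $t\le n-1$ and surjective iff $n-1\le t$. You wrote ``injective when $n\le t+1$,'' which is the same condition as your surjectivity condition, so as written it would not cover all degrees.

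Second, the commutator has the opposite sign: $[D,D^*]=E_u-E_X$ (test on $X$: $[D,D^*]X=-X$). The triple is therefore $e=D^*$, $f=D$, $h=[D^*,D]=E_X-E_u$, with $h$ acting on $[M(C)]_t$ by $r-2-2t$. With this normalization, $f$ is injective out of weight $\lambda\ge 1$ and surjective out of weight $\lambda\le 1$. At weight $0$ (that is, $n=t$) it kills the trivial summand $j=n$, so ``weight $\ge 0$'' should read ``weight $\ge 1$.''

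Your highest-weight computation $D((Xv-Yu)^jX^{n-j}u^{t-j})=(n-j)(Xv-Yu)^jX^{n-j-1}u^{t-j+1}$ is correct. Combined with the corrected ranges, it already suffices for WLP without the $\mathfrak{sl}_2$ step.
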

\begin{proof}
    Let $Q$ be the polynomial defining the quadric surface. Then for $t\leq \frac{r-2}{2}$ we have $[I_{C}]_t=QR_{t-2}$, with an analogous statement for $C \cap H$ in the plane $H$. Let $H$ be a general plane and $t\leq \frac{r-2}{2}$. It follows that in the exact sequence $$
    0\to [I_C]_{t-1}\to [I_C]_t\to [I_{C\cap H | H}]_t\to [M(C)]_{t-1}\to [M(C)]_t\to\dots
    $$
    the alternating sum of the dimensions of the first three terms is $\binom{t}{3}-\binom{t+1}{3}+\binom{t}{2}=0$. In particular, we have injectivity from degree $\lfloor \frac{r-2}{2}\rfloor-1$ to degree $\lfloor \frac{r-2}{2}\rfloor$ so we are done by Remark \ref{enough}. 
\end{proof}

\begin{corollary} \label{curve on quadric}
    If $C$ is any curve on a smooth quadric surface then $M(C)$ has WLP. 
\end{corollary}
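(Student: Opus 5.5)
The plan is to reduce to the case of skew lines via liaison, using the fact recalled in the introduction that WLP for $M(C)$ is preserved under both even and odd liaison. On a smooth quadric $Q \cong \PP^1 \times \PP^1$, every curve $C$ has a bidegree $(a,b)$. Curves of bidegree $(a,b)$ and $(a',b')$ with $a-b = a'-b'$ lie in the same even liaison class. This holds because linking by a complete intersection of $Q$ with a surface of degree $d$ sends $(a,b)$ to $(d-a,d-b)$, so two such links shift the bidegree by $(k,k)$. Moreover, $M(C)$ depends up to shift only on $|a-b|$ (the curves with $|a-b|\le 1$ being ACM). So the first step is to verify that every curve $C \subset Q$ of bidegree $(a,b)$ is linked, in finitely many steps using complete intersections of $Q$ with surfaces of large degree, to a curve of bidegree $(0,r)$ or $(r,0)$ with $r = |a-b|$, i.e.\ to $r$ skew lines in one ruling.

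If $r \le 1$, then $C$ is ACM: it is linked to the empty set or to a single line. Hence $M(C)=0$ and WLP holds trivially. If $r \ge 2$, then $C$ lies in the liaison class of a union of $r$ skew lines in one ruling of $Q$. Proposition~\ref{prop:lines quadric} gives WLP for $M$ of such a union, and invariance of WLP under shift and $\mathbb K$-duality then transfers WLP to $M(C)$.

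The main obstacle is making the liaison step rigorous for arbitrary (possibly nonreduced, locally Cohen--Macaulay) curves on $Q$. Specifically, one must produce a surface $S$ of degree $d \ge \max(a,b)$ containing $C$ such that $Q \cap S$ is a complete intersection containing $C$, and one must check that the residual is again a curve on $Q$ of bidegree $(d-a,d-b)$. Containment is automatic since the link sits inside $Q$. Choosing $d$ large and $S$ general with $S \not\supset Q$ handles existence, because $[I_C]_d$ is not contained in $(Q)$ for $d \gg 0$. The residual class follows from additivity of divisor classes on the smooth surface $Q$, where $C$ is an effective Cartier divisor. To control the target, link first with $d = \max(a,b)$, obtaining bidegree $(d-a,d-b)$, one entry of which is $0$. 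The result is a divisor of type $(0,r)$ or $(r,0)$, which is necessarily a union of $r$ lines of one ruling counted with multiplicity. If multiple lines appear, one further even liaison (linking twice) moves the curve to a reduced union of $r$ distinct lines in that ruling. Alternatively, one can invoke the standard fact that $M$ depends only on the divisor class (via $H^1(\mathcal O_Q(-a,-b)(t))$), computed by Künneth. I would use this Künneth description directly to confirm that $M(C)$ is, up to shift, the same module as for $r$ reduced skew lines, and then apply Proposition~\ref{prop:lines quadric}.
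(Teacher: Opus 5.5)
Your proposal is correct and is essentially the paper's argument: link $C$ of type $(a,b)$, via $Q$ and a surface of degree $\max(a,b)$, directly to $r=|a-b|$ lines of one ruling, with the case $r\le 1$ being ACM. Then apply Proposition~\ref{prop:lines quadric} together with the invariance of WLP under liaison. Your concern about a nonreduced residual addresses a point the paper passes over. The cleanest fix uses $[I_C]_d/Q\cdot[R]_{d-2}\cong H^0(\mathcal O_Q(d-a,d-b))\neq 0$ for $d=\max(a,b)$: a general surface of degree $d$ through $C$ then leaves as residual a general, hence reduced, member of $|\mathcal O_Q(r,0)|$ (or of the analogous system in the other ruling).
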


\begin{proof}
    This follows by liaison. If $C$ has type $(a,a)$ then it is a complete intersection so $M(C) = 0$. If $C$ is a curve of type $(a,b)$ with $a < b$ then $C$ is directly linked to $b-a$ lines on the quadric.
\end{proof}

\subsection{All but one line on a quadric}

\begin{proposition}
\label{all but one}
    Let $C$ be the union of a set of $r$ skew lines on a smooth quadric surface $Q$ and a general line $\ell$. Then $M(C)$ has WLP.
\end{proposition}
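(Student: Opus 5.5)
The plan is to compare $M(C)$ with $M(C')$, where $C'$ denotes the $r$ skew lines on $Q$, using the disjointness of $\ell$ and $C'$. Since $\ell\cap C'=\emptyset$, we have an exact sequence of sheaves $0\to\mathcal I_C\to\mathcal I_{C'}\oplus\mathcal I_\ell\to\mathcal O_{\PP^3}\to 0$. Taking cohomology, and using $M(\ell)=0$, gives a short exact sequence of graded $R$-modules
\[
0\rightarrow N \rightarrow M(C)\rightarrow M(C')\rightarrow 0, \qquad N=R/(I_{C'}+I_\ell).
\]
The idea is to get WLP for $M(C)$ from WLP for $M(C')$ (\Cref{prop:lines quadric}) together with an explicit description of $N$. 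The two pieces are then glued with the snake lemma applied to $\times L$ for a general linear form $L$.

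\textbf{Step 1: describe $N$.} Restricting to $\ell\cong\PP^1$, we get $N\cong \mathbb K[s,t]/J$, where $J$ is the restriction of $I_{C'}$ to $\ell$. In degrees $<r$ we have $[I_{C'}]_j=Q\cdot[R]_{j-2}$ (as in the proof of \Cref{prop:lines quadric}). Hence in those degrees $J$ agrees with $(q)$, where $q=Q|_\ell$ vanishes at the two distinct points of $\ell\cap Q$. Thus $\dim N_0=1$ and $\dim N_j=2$ for $1\le j\le r-1$; $N$ is some quotient of $\mathbb K[s,t]/(q)$ in higher degrees. Choose $L$ general, so that $L|_\ell$ does not vanish at either point of $\ell\cap Q$. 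Then $\times L$ is injective on $\mathbb K[s,t]/(q)$, and an isomorphism in degrees $\ge 1\to\ge2$. Consequently, on $N$:
\begin{itemize}
\item $\times L\colon N_{j-1}\to N_j$ is injective for $j\le r-1$;
\item it is an isomorphism for $2\le j\le r-1$;
\item it is surjective for all $j\ge 2$, since surjectivity passes to quotients.
\end{itemize}

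\textbf{Step 2: snake lemma, degree by degree.} Consider $\times L\colon [\,\cdot\,]_{j-1}\to[\,\cdot\,]_j$ on the short exact sequence. Recall that $M(C')$ lives in degrees $0,\dots,r-2$ and has WLP.
\begin{itemize}
\item For $2\le j\le r-1$, the map on $N$ is an isomorphism. The snake lemma then gives $\ker(\times L|_{M(C)})\cong\ker(\times L|_{M(C')})$ and $\operatorname{coker}(\times L|_{M(C)})\cong \operatorname{coker}(\times L|_{M(C')})$. Moreover $\dim M(C)_j-\dim M(C)_{j-1}=\dim M(C')_j-\dim M(C')_{j-1}$. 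So maximal rank for $M(C')$ transfers verbatim to $M(C)$.
\item For $j=1$, both $N_0\to N_1$ and $M(C')_0\to M(C')_1$ are injective. The latter holds because $\dim M(C')_0=r-1\le 2(r-2)=\dim M(C')_1$ when $r\ge3$; small $r$ is handled directly. Hence $M(C)_0\to M(C)_1$ is injective.
\item For $j\ge r$, we have $M(C')_{j-1}=M(C')_j=0$, so $M(C)$ coincides with $N$ in these degrees, where $\times L$ is surjective.
\end{itemize}

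The main obstacle is Step 1: the precise structure of $N$, and in particular the claim that restricting $I_{C'}$ to $\ell$ yields exactly $(q)$ in all degrees $\le r-1$. This uses that $I_{C'}$ has no generators other than $Q$ below degree $r$, which follows from the resolution of $M(C')$ computed above (generators of $I_{C'}$ in degrees $2$ and $r$). It also needs the genericity of $\ell$ (meeting $Q$ in two distinct points) and of $L$. Beyond that, the argument is bookkeeping, and the small cases $r\le 2$ can be checked by hand.
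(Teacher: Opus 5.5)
Your proof is correct, but it takes a genuinely different route from the paper's.

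\textbf{The paper's route.} The paper works entirely with the hyperplane-section sequence \eqref{usual 3} for $m=1$. It uses $[I_C]_t=Q[I_\ell]_{t-2}$ for $t<r$ and $[I_{C\cap H|H}]_t=Q'[I_{\ell\cap H}]_{t-2}$ for $t\le\frac{r-1}{2}$ to compute $\dim[M(C)]_t-\dim[M(C)]_{t-1}=r-2t-1$, so the peak is at $\lfloor\frac{r-1}{2}\rfloor$. Below the peak, injectivity holds because the alternating sum of the first three terms of \eqref{usual 3} vanishes, so $[I_C]_t\to[I_{C\cap H|H}]_t$ is onto. Above the peak, surjectivity holds because $r$ points on a conic plus one general point impose independent conditions in degree $t$. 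This argument is self-contained: it does not invoke \Cref{prop:lines quadric}.

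\textbf{Your route.} You use the disjoint-union sequence $0\to N\to M(C)\to M(C')\to 0$ and transfer WLP from $M(C')$ by the snake lemma. In degrees $\le r-1$ you identify $N$ with the coordinate ring of the two points $\ell\cap Q$, on which $\times L$ is injective, and bijective from degree $1$ on.

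\textbf{What each approach buys.} Your argument gives more structure. In degrees $2\le j\le r-1$, the kernel and cokernel of $\times L$ on $M(C)$ are isomorphic to those on $M(C')$, not just of the right size. It also isolates a general principle: adjoining a disjoint line preserves WLP whenever $\times L$ is bijective on $R/(I_{C'}+I_\ell)$ across the support of $M(C')$, apart from the edges. The cost is that you inherit the dependence on \Cref{prop:lines quadric}, and through it on the duality and socle argument of \Cref{enough}. You also need $L$ chosen in the intersection of two nonempty open sets: WLP for $M(C')$, and $L|_\ell$ nonvanishing at $\ell\cap Q$.

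\textbf{Small cases.} For $r=2$ your $j=1$ step really does break down, since $M(C')_0\to M(C')_1$ is $\mathbb K\to 0$. In that case, however, $C$ is three skew lines, so \Cref{prop:lines quadric} (or \Cref{Prop:GenlWLP}) applies directly, as your ``check by hand'' presumably intends.
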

\begin{proof}
    Note that for $t<r$, $[I_C]_t=Q[I_\ell]_{t-2}$. In particular $\dim [I_{C}]_t={t+1\choose 3}-(t-1)$. It follows from \eqref{usual 1} that $\dim [M(C)]_t=(t+1)(r+1)-{t+3\choose 3}+{t+1\choose 3}-(t-1)$. By direct computation, we have $\dim [M(C)]_t-\dim [M(C)]_{t-1}=r-2t-1$; thus $\dim [M(C)]_t$  is maximal at $t=\left\lfloor \frac{r-1}{2}\right\rfloor$.

    Let $H$ be a general hyperplane, so $(C\cap H)|H$ consists of $r$ points on a conic and a general point. Therefore $[I_{C\cap H|H}]_t=Q'[I_{\ell\cap H}]_{t-2}$ for $t\leq \frac{r-1}{2}$, where $Q'$ is the equation of $(Q\cap H)|H$. It follows that $\dim [I_{C\cap H|H}]_t=
        {t\choose2}-1$, if $ t\leq \frac{r-1}{2}$, and $\dim [I_{C\cap H|H}]_t=
        {t+2\choose 2}-r-1$ when $t>\frac{r-1}{2}.$
    
    For $t>\left\lfloor \frac{r-1}{2}\right\rfloor$, surjectivity follows from \eqref{usual 3} by noting that $h^1(\mathcal I_{C\cap H|H}(t))=0$.

    For $t\leq \left\lfloor \frac{r-1}{2}\right\rfloor$, injectivity follows by considering the alternating sum of the dimensions of the first three terms in \eqref{usual 3}, which is $$\left[{t\choose 3}-(t-2)\right]-\left[{t+1\choose 3}-(t-1)\right]+\left[{t\choose2}-1\right]=0.$$

\end{proof}

\subsection{All but two lines on a quadric}
\label{subsec: all but two}

\begin{proposition} \label{all but 2}
Let $C$ be a set of $s$ skew lines with $s-2$ of them on a quadric surface $Q$ and two skew lines not lying on $Q$. If $s \geq 10$ then $M(C)$ fails WLP from degree 2 to degree 3.
\end{proposition}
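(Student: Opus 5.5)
The plan is to compute $\dim [M(C)]_2$ and $\dim [M(C)]_3$ directly and show that $\dim[M(C)]_3 \ge \dim [M(C)]_2$. Then maximal rank of $\times L\colon [M(C)]_2 \to [M(C)]_3$ would mean injectivity. Finally, I will show that the kernel is nonzero using the exact sequence (\ref{usual 3}) with $m=1$.

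\emph{Step 1: the ideal of $C$ in low degree.} Write $C = C_0 \cup \ell_1 \cup \ell_2$, where $C_0$ consists of the $s-2 \ge 8$ lines on $Q$. I take $Q$ smooth, as in the preceding subsections, so these skew lines lie in a single ruling. Let $F$ be a surface of degree $t \le 3$ containing $C_0$, and suppose $Q \nmid F$. Then $F \cap Q$ is a curve of bidegree $(t,t)$ on $Q$ containing $s-2 > t$ lines of one ruling, which is impossible. Hence $[I_C]_t = Q\cdot [I_{\ell_1\cup \ell_2}]_{t-2}$ for $t \le 3$.

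Since $\ell_1,\ell_2 \not\subset Q$, we get $[I_C]_2 = 0$. Two skew lines lie on no plane, so $[I_C]_3 = 0$ as well. From (\ref{usual 1}) with $h^0(\mathcal O_C(t)) = s(t+1)$ this gives
\[
\dim [M(C)]_2 = 3s - 10 \quad\text{and}\quad \dim[M(C)]_3 = 4s - 20 .
\]
Thus $\dim[M(C)]_3 \ge \dim[M(C)]_2$ exactly when $s \ge 10$, so for $s \ge 10$ WLP in this degree requires injectivity.

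\emph{Step 2: the kernel.} In (\ref{usual 3}) with $m=1$, since $[I_C]_3 = 0$, the sequence becomes
\[
0 \to [I_{Z_1|H}]_3 \to [M(C)]_2 \xrightarrow{\times L} [M(C)]_3 ,
\]
so the kernel is identified with $[I_{Z_1|H}]_3$.

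Here $Z_1 = C\cap H$ consists of $s-2 \ge 8$ points on the conic $Q\cap H$, together with the two points $\ell_i \cap H$. Take $q$ to be the conic and $\lambda$ a line through the two extra points. Then $q\cdot\lambda$ is a nonzero plane cubic containing $Z_1$, so the kernel is nonzero. (By B\'ezout, every such cubic is in fact of this form, though this is not needed.) Hence multiplication by a general $L$ fails to be injective, and therefore fails to have maximal rank, from degree $2$ to degree $3$.

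\emph{Main obstacle.} The only delicate point is Step 1: justifying that $Q$ divides every cubic through $C_0$. This is where $s-2 > 3$ is used via the bidegree argument. The threshold $s \ge 10$ then comes purely from the dimension comparison. There is also a side check: $\ell_1, \ell_2$ contribute $2(t+1)$ to $h^0(\mathcal O_C(t))$ regardless of how they meet $Q$, since $C$ is a disjoint union of lines.
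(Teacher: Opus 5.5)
Your proposal is correct and follows essentially the same route as the paper: you show $[I_C]_t=0$ for $t\le 3$, compare $\dim[M(C)]_2=3s-10$ with $\dim[M(C)]_3=4s-20$ via (\ref{usual 1}), and identify the kernel of $\times L$ with $[I_{C\cap H|H}]_3$ via (\ref{usual 3}), which is nonzero because of the conic-times-line cubic. Your bidegree argument that $Q \mid F$ supplies the step the paper states without proof; your assumption that $Q$ is smooth is also automatic, since a singular quadric cannot contain $8$ pairwise skew lines.
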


\begin{proof}

Note that for the union of $s-2 \geq 8$ lines  contained in $Q$ and two skew lines not lying on $Q$ we have $\dim [I_C]_t = 0$ for $t \leq 3$.  From the sequence (\ref{usual 1}) we get
\[
\dim [M(C)]_2 = 3s-10 \ \ \hbox{ and } \ \ \dim [M(C)]_3 = 4s-20,
\]
so for $s \geq 10$ we have $\dim [M(C)]_2 \leq \dim [M(C)]_3$. Then by (\ref{usual 3}) (with $m=1$), WLP fails from degree 2 to degree 3 if and only if $\dim [I_{C \cap H | H}]_3 \neq 0$.  But the latter inequality is clear since we have $s-2$ points on a conic and two off the conic.
\end{proof}

%%%%%%%%%%%%%%%%%%%%%%%%%%%%%%%%%%%%%%%%%%%%%%%%%%%%%%%%%%%%%%%%%%%%%%%

\section{Some comments on smooth irreducible curves}
\label{sec5}

Of course, sets of skew lines are smooth, and we have given some analysis of such curves in the earlier sections. A more delicate setting is that of smooth irreducible curves. We know that given any graded module $M$ of finite length, there is a smooth curve $C$ for which $M(C) \cong M(-\delta)$ for $\delta \gg 0$ \cite{rao}. {On the other hand, since $M$ has finite length, for $\delta \gg 0$ the component in degree 0 of $M(-\delta)$ is 0, so such a curve is connected. Thus $C$ is smooth and irreducible.}

However, such curves have very large degree in general.

\begin{question} \label{Q for curves}
What is the smallest degree, $d$, of a {\it smooth, irreducible} curve $C$ for which $M(C)$ fails WLP? What is the smallest arithmetic genus of such a curve?
\end{question}

While we do not have a complete answer, the results of this  section illustrate the strength of the Huneke-Ulrich result (Theorem \ref{HU fact}), and Remark \ref{use irred} shows how irreducibility can be a key factor.

First, the next result gives a candidate that we believe has the smallest degree and genus.  We noted above that Buchsbaum liaison classes are a natural place to look for curves for which $M(C)$ does not have WLP since the multiplication maps are zero. It is certainly true that the curve constructed in Proposition \ref{11buchs} below is the smallest smooth, irreducible Buchsbaum curve whose module has diameter at least two, as required for WLP failure to be relevant to our problem. We refer to \cite{BBM} and \cite{BM4}  for results about the Lazarsfeld-Rao Property and shifts of the even liaison class, and to \cite{BM2} and \cite{GM1} for facts about arithmetically Buchsbaum curves.

We recall some notation and facts. Let $M$ be any graded $R$-module of finite length and let $\mathcal L$ be the corresponding even liaison class, i.e. the set of curves $C$ in $\mathbb P^3$ for which $M(C)$ is isomorphic to a shift of $M$ (see \cite{rao}). Which shifts occur, and which contain smooth curves? It is known (see \cite{BM4}) that there is a leftmost shift $M(\delta_0)$ of $M$ that can occur, i.e. $\delta_0$ is such that there is at least one curve $C \in \mathcal L$ with $M(C) = M(\delta_0)$ but there are no curves with  $M(C) = M(\delta_0 +1)$. Denote by $\mathcal L^0$ all such curves, and without loss of generality assume that $M$ has this leftmost shift. The curves in $\mathcal L^0$ are so-called {\it minimal} curves in $\mathcal L$, and they all have the same degree (which is minimal among elements of $\mathcal L$), Hilbert function, and the Hilbert function of the general hyperplane section (see \cite{BBM}). The set of curves for which $M(C) = M(-h)$ ($h \geq 1$) is denoted $\mathcal L^h$, and these do not have to have the same degree, but the Hilbert functions of such curves can be described in terms of the Hilbert functions of elements of $\mathcal L^0$ (cf. \cite{BM4} or \cite{BBM}).

\begin{proposition}\label{11buchs}
There is a smooth arithmetically Buchsbaum curve $C$ of degree 15 and arithmetic genus 25 for which 
\[
\dim [M(C)]_i = 
\left \{
\begin{array}{ll}
1 & \hbox{if } i = 3,4; \\
0 & \hbox{otherwise}.
\end{array}
\right.
\]
This is the smallest degree of any smooth arithmetically Buchsbaum curve in $\mathbb P^3$ {with a Hartshorne-Rao module whose diameter is at least two.
}
\end{proposition}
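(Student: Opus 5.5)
The plan is to work entirely inside the even liaison class $\mathcal L$ of the module $M=\mathbb K\oplus\mathbb K(-1)$ with all multiplications equal to zero. An arithmetically Buchsbaum curve whose module has diameter two and the stated dimensions must have exactly this module up to shift, since $\mathfrak m M=0$ forces $M(C)\cong \mathbb K(-3)\oplus\mathbb K(-4)$.

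\textbf{Step 1: the minimal curves.} First I will determine $\mathcal L^0$. The minimal free resolution of $M$ is the direct sum of two Koszul complexes, one shifted by $1$. From the Lazarsfeld-Rao property (\cite{BBM}, \cite{BM4}) and the structure theory of Buchsbaum curves (\cite{BM2}, \cite{GM1}), I expect the minimal curves to be a union of two pairs of skew lines suitably placed. Concretely, $\mathcal L^0$ should contain a degree $4$ union of lines with $h^1$ in degrees $0,1$. From such a minimal curve I will record its degree $d_0$ and the $h$-vector of its general hyperplane section.

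\textbf{Step 2: the degree bound.} Next I will use the description in \cite{BM4} and \cite{BBM} of the Hilbert functions, and hence the possible degrees, of curves in $\mathcal L^h$. A basic double link $C'\subset F\cup C$ on a surface of degree $f$ shifts the module by one and raises the degree by roughly $f$ times the degree of the next generator. I will determine, for each $h$, the smallest degree of a curve in $\mathcal L^h$ that can be \emph{smooth and connected}. Connectedness forces $[M(C)]_0=0$, i.e.\ $h\ge1$. Smoothness imposes further conditions: a general hyperplane section has the UPP and general linear position, and \Cref{HU fact} constrains the lifting degrees. These rule out small $h$ and small degrees.

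I expect the bound to come out as follows. For $h=1,2$ the curves in $\mathcal L^h$ of low degree must contain lines or plane curves as components, since the ideal in low degree is generated by forms with common factors. By \Cref{lem:irredinit} and \Cref{UPP cor}, this contradicts irreducibility. The first shift admitting a smooth irreducible member is therefore $h=3$, with minimal degree $15$. I will verify that the arithmetic genus is then $25$ using $p_a=1-\chi(\mathcal O_C)$ and the Hilbert function.

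\textbf{Step 3: existence.} Finally I will show that a smooth curve of degree $15$ actually occurs in $\mathcal L^3$. My first attempt is a chain of basic double links starting from the minimal curve, using general surfaces of the appropriate degrees, and then replacing the result by a general element of its irreducible Hilbert-scheme component. Smoothness then follows from the standard argument that a general curve in the family of curves with fixed module and Hilbert function is smooth once the ideal sheaf twisted suitably is globally generated. Alternatively, I will link twice by general complete intersections through a smooth curve in the dual class. As a fallback, an explicit Macaulay2 example checked for smoothness would suffice.

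I expect the main obstacle to be the minimality claim in Step 2, namely excluding every degree below $15$ for all shifts $h$. This requires careful bookkeeping of the possible Hilbert functions in each $\mathcal L^h$, together with the irreducibility arguments (\Cref{UPP cor}, \Cref{HU fact}) to kill each low-degree candidate.
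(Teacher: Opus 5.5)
\textbf{Step 1 is wrong, and Steps 2 and 3 rest on it.} The class of $M=\mathbb K\oplus\mathbb K(-1)$ contains no curve of degree $4$. Its minimal curves have degree $10$ and module in degrees $2,3$ (\cite[Theorem 3.1]{BM2}). By the Lazarsfeld--Rao property, no smaller degree and no module further left can occur. For instance, two disjoint reducible conics have module dimensions $(1,2,1)$, not $(1,1)$.

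Your two pairs of skew lines are the right ingredients, but they must be combined by liaison addition. Take $I=F_2I_{C_1}+F_1I_{C_2}$ with a quadric $F_1\supset C_1$ and a cubic $F_2\supset C_2$. This adds a complete intersection of degree $6$ and gives $M\cong\mathbb K(-2)\oplus\mathbb K(-3)$.

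Consequently the curve in the statement lies in $\mathcal L^1$, not $\mathcal L^3$, and your exclusion of $h=1,2$ concerns empty sets. What must actually be excluded is the following.
\begin{itemize}
\item[(a)] $\mathcal L^0$ (degree $10$) contains no smooth curve (\cite[Lemma 4.3, Theorem 2.12]{BM2}).
\item[(b)] A curve of $\mathcal L^1$ of degree $<15$ has degree $14$. It is a deformation of a basic double link $L\cdot I_{C_0}+(F)$ with $F$ a quartic, since minimal curves lie on quartics but on no cubic. Hence it lies on a quartic, and no integral curve of this class lies on a quartic (\cite[Corollary 2.11(b)]{BM2}). Curves in $\mathcal L^h$ with $h\ge 2$ have still larger degree.
\end{itemize}
These are specific facts about this liaison class. \Cref{lem:irredinit}, \Cref{UPP cor} and \Cref{HU fact} are about irreducibility of general forms through UPP points and about lifting forms from a hyperplane section. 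They give no mechanism showing that low-degree members of $\mathcal L^h$ are reducible.

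\textbf{The minimality claim covers more than one module.} You fix the module at the outset, but the claim is over \emph{all} smooth arithmetically Buchsbaum curves whose module has diameter at least two. This includes, for example, $\mathbb K^a\oplus\mathbb K^b(-1)$ with $a+b\ge 3$, and modules of larger diameter. Your plan never rules these out. The paper does so using the minimal-degree results for Buchsbaum classes (\cite[Theorems 2.1, 3.1]{BM2}) together with the Lazarsfeld--Rao property.

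\textbf{Existence should start from the degree-$10$ minimal curve.} Perform a basic double link on a quintic and then deform; that a general such curve is smooth is \cite[Proposition 4.7]{BM2}. Your global-generation/Bertini idea could replace that citation, but only after you write down the actual $\mathcal N$-type resolution of these curves.

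Your genus check is fine once you know the general hyperplane section has $h$-vector $(1,2,3,4,4,1)$: $p_a=12+9+5+1-2=25$.
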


   \begin{proof} 
    (``Diameter" refers to the number of components from the first to the last nonzero one. ``Dimension" refers to the total dimension of $M(C)$ as a $\mathbb{K}$-vector space.)

    Let $\mathcal L_{1,1}$ be the even liaison class (which in fact is the same as the regular liaison class in this case) of arithmetically Buchsbaum curves with $M(C)$ having diameter and dimension both equal to 2. 
    
    The paper \cite{BM2} uses the language of {\it numerical character} rather than the $h$-vector, but the information is equivalent. Here we will give the necessary results in our $h$-vector notation. See \cite[\S 1]{GM3}  for the dictionary, and see \cite[Notation 1.1]{BM2}  for the notation used in the cited result in \cite{BM2}.

    By \cite[Lemma 4.3]{BM2},  there are no smooth curves in $\mathcal L_{1,1}^0$. These curves all have maximal rank and degree 10. However, \cite[Proposition 4.7]{BM2}  says that in $\mathcal L_{1,1}^1$ there are smooth, irreducible maximal rank curves. The  $h$-vector for the general hyperplane section of any of these smooth curves is $(1,2,3,4,4,1)$, corresponding to the numerical character $(5,5,5,6)$, and the curves have degree 15. This $h$-vector gives the Hilbert function $(1,3,6,10,14, 15, 15 , \dots)$ for the general hyperplane section $\Gamma$ of the curve $C$, and  
    together with the fact that the curves are in the even liaison class $\mathcal L_{1,1}$, we obtain the arithmetic genus 25 using the formula 
\[
g = \sum_{i=1}^\ell [d - h_\Gamma(i)] - K  ,
\]
where $K$ is the vector space dimension of a certain submodule of the Hartshorne-Rao module of $C$, which in our case (because $C$ is arithmetically Buchsbaum) is the entire Hartshorne-Rao module, so $K = 2$ (cf. \cite[Proposition 1.4.2]{MBook}).

    The assertion that this is the smallest degree among arithmetically Buchsbaum smooth curves follows from the Lazarsfeld-Rao property and the explicit construction of minimal elements given in \cite{BM2} (see also \cite{GM1}). If $C \in \mathcal L^0_{1,1}$ then its degree is 10 (\cite[Theorem 3.1]{BM2}), but it cannot be irreducible (\cite[Theorem 2.12]{BM2}). Since there is no reduced, irreducible curve in $\mathcal L_{1,1}$ lying on a surface of degree 4 (\cite[Corollary 2.11 (b)]{BM2}), the Lazarsfeld-Rao property gives that the curve constructed here is the smallest reduced, irreducible curve in $\mathcal L_{1,1}$. But no other arithmetically Buchsbaum liaison class whose module has at least two components (so WLP can fail) contains any curve of degree 15 (use \cite[Theorem 3.1 and Theorem 2.1]{BM2}, as well as the Lazarsfeld-Rao property \cite{BBM}, which did not yet exist in full generality when \cite{BM2} was written).
   \end{proof}

Now we return to Question \ref{Q for curves}. 
We know $d \leq 15$ by Proposition \ref{11buchs}. 

\begin{proposition} \label{at least 10}
        The minimum degree $d$ is at least 10.
\end{proposition}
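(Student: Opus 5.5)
We must show: if $C$ is a smooth irreducible curve of degree $d\le 9$, then $M(C)$ has WLP. The plan is to run through the possible modules degree by degree. We may assume $C$ is nondegenerate and not ACM, since otherwise $M(C)=0$ or $C$ is a plane curve, which is ACM. WLP is only at stake when $M(C)$ has diameter at least two. If $M(C)$ is concentrated in a single degree, WLP holds trivially.

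The main tool is the sequence \eqref{usual 3} with $m=1$:
\[
[I_{\Gamma|H}]_t \to [M(C)]_{t-1} \xrightarrow{\times L} [M(C)]_t \to H^1(\mathcal I_{\Gamma|H}(t)),
\]
where $\Gamma = C\cap H$ is a set of $d$ points with the UPP. Multiplication by $L$ is injective from degree $t-1$ whenever every element of $[I_{\Gamma|H}]_t$ lifts to $I_C$, that is, for $t<b$. It is surjective onto degree $t$ as soon as $h^1(\mathcal I_{\Gamma|H}(t))=0$, that is, once $t$ is at least the regularity index of $\Gamma$. So WLP holds provided that, in every degree, the map either lies in the range $t<b$ where it is injective, or $t$ is beyond the last degree where $h_\Gamma(t)<d$. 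By Theorem~\ref{HU fact}, $b\ge a$, where $a$ is the least socle degree of the Artinian reduction of $A$. Hence it suffices to show $M(C)$ vanishes in any degree where neither condition applies.

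For each $d\le 9$ I will list the $h$-vectors of $d$ points in UPP in $\PP^2$. UPP forces GLP, which makes the $h$-vector of decreasing type, and \Cref{cor:HF of CI} handles complete intersection cases. For each $h$-vector I compare the least socle degree $a$ with the regularity index (the last nonzero degree of the $h$-vector). If the socle of the Artinian reduction sits only in the last degree, so $a$ equals the socle degree $e$, then injectivity holds for $t\le e$ and surjectivity holds for $t\ge e+1$ (indeed $h^1(\mathcal I_\Gamma(t))=0$ for $t\ge e$). That covers every $t$, and WLP follows. For $d\le 9$, the $h$-vectors in question are $(1,2,\dots)$ with at most one drop: $(1,2,3,3)$, $(1,2,2,1)$, $(1,2,3,2,1)$, $(1,2,3,1)$, $(1,2,1)$, and so on. The bad cases are those with socle in two degrees, namely an $h$-vector like $(1,2,3,h_3)$ with $h_3<3$, or $(1,2,h_2)$ with $h_2<2$. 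Then $a=e-1$, and exactly one degree $t=e$ is uncovered: there the map $[M]_{e-1}\to[M]_e$ may fail both injectivity and surjectivity.

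I expect these uncovered cases to be the main obstacle. For them I plan to use degree and genus bounds instead. Castelnuovo-type bounds for space curves of degree $\le 9$, together with the Riemann--Roch computation $h^1(\mathcal I_C(t)) = h^0(\mathcal O_C(t)) - h_C(t)$, should show that $M(C)$ is zero in degree $e$ or in degree $e-1$, or else that $M(C)$ has diameter $1$. Examples of such cases are rational quartics, whose module is one-dimensional, and the curves of degree $\le 9$ lying on a smooth quadric, which are covered by \Cref{curve on quadric}. If a curve lies on a quadric cone or on no quadric, I would bound its Rao module by the known classification of low-degree space curves, such as Hartshorne's list of possible $(d,g)$. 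Where this does not immediately apply, I would first try to show that the nonzero part of $M(C)$ stays strictly below degree $e-1$.
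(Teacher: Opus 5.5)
\textbf{Verdict: genuine gap, but an easy one to close.} The final paragraph leaves your ``bad cases'' unproved, and once the indexing is corrected those cases turn out to be trivial.

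\textbf{What works.} Your mechanism is sound. Where the socle of the Artinian reduction sits only in the last degree $e$, it is more direct than the paper's route. Theorem~\ref{HU fact} makes every element of $[I_{\Gamma|H}]_t$ lift for $t\le e-1$, which gives injectivity there. Since $h_\Gamma(t)=d$ for $t\ge e$, you get surjectivity for $t\ge e$. So you avoid the paper's detour through ``$C$ lies on a quadric'' plus Corollary~\ref{curve on quadric} (and whether that quadric is smooth), and through Strano's theorem for $(1,2,3,2,1)$. The correct ranges are injectivity for $t\le e-1$ and surjectivity for $t\ge e$, not $t\le e$ and $t\ge e+1$. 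Theorem~\ref{HU fact} says nothing at $t=e$.

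\textbf{The gap.} The cases you call bad are deferred to unspecified Castelnuovo bounds and a $(d,g)$ classification, which is not an argument. The diagnosis of those cases is also off, for two reasons.
\begin{enumerate}
\item With $a=e-1$, the uncovered map is $[M(C)]_{e-2}\to[M(C)]_{e-1}$. It is not $[M(C)]_{e-1}\to[M(C)]_e$, which is surjective because $h_\Gamma(e)=d$.
\item Two of your three flagged $h$-vectors are not two-socle cases. For UPP sets, $(1,2,1)$ is a complete intersection of two conics. And $(1,2,3,2)$ has socle only in degree $3$: the pencil of cubics has no fixed component, so its restriction to a general line consists of coprime binary cubics. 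For $d\le 9$, only $(1,2,3,1)$ has socle in two degrees.
\end{enumerate}

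\textbf{The missing observation.} Use the trivial half of \eqref{usual 3}: $\times L\colon[M(C)]_{t-1}\to[M(C)]_t$ is injective whenever $[I_{\Gamma|H}]_t=0$, with no lifting needed. So maximal rank can fail at $t$ only if $[I_{\Gamma|H}]_t\neq 0$ and $h^1(\mathcal I_{\Gamma|H}(t))\neq 0$ simultaneously. That is impossible when $h_\Gamma(t)=\min\{d,\binom{t+2}{2}\}$ for all $t$.

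This is how the paper disposes of $(1,2)$, $(1,2,1)$, $(1,2,2)$, $(1,2,3)$, $(1,2,3,1)$, $(1,2,3,2)$ and $(1,2,3,3)$, without appeal to Theorem~\ref{HU fact}. In particular, your one genuine two-socle case $(1,2,3,1)$ is fine, since $[I_{\Gamma|H}]_2=0$.

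\textbf{What you still owe.} Theorem~\ref{HU fact} is needed only for the five non-generic $h$-vectors $(1,2,2,1)$, $(1,2,2,2)$, $(1,2,2,2,1)$, $(1,2,3,2,1)$ and $(1,2,2,2,2)$. For these you must verify that the socle lies only in the last degree.
\begin{itemize}
\item Corollary~\ref{cor:HF of CI} gives this for the three complete-intersection vectors.
\item For $(1,2,2,2)$ and $(1,2,2,2,2)$, let $\bar q$ be the restriction of the conic to a general line. The Artinian reduction coincides with $\mathbb K[x,y]/(\bar q)$ in degrees $\le e$. Then $\bar q\mid xf$ and $\bar q\mid yf$ force $\bar q\mid f$, so there is no socle below degree $e$.
\end{itemize}
With these additions your direct argument is complete and needs no degree--genus classification.
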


\begin{proof}
    Let $C$ be a smooth, irreducible curve of degree $d \leq 9$. Let $L$ be a general linear form. Since $C$ is irreducible, its general hyperplane section has UPP, and in particular the $h$-vector of the general hyperplane section is of decreasing type. Clearly it is not of the form $(1,1,\dots,1)$ since $C$ is not a plane curve (which would be ACM). We list the possible $h$-vectors:

    \renewcommand{\labelenumi}{(\arabic{enumi})  }
\renewcommand{\labelenumii}{(\alph{enumii})}

    \begin{enumerate}
        \item \label{3} $(1,2)$;
        \item \label{4} $(1,2,1)$;
        \item \label{5} $(1,2,2)$;
        \item \label{6a} $(1,2,3)$;
        \item \label{6b} $(1,2,2,1)$;
        \item \label{7a} $(1,2,3,1)$;
        \item \label{7b} $(1,2,2,2)$;
        \item \label{8a} $(1,2,3,2)$;
        \item \label{8b} $(1,2,2,2,1)$;
        \item \label{9a} $(1,2,3,3)$;
        \item \label{9b} $(1,2,3,2,1)$;
        \item \label{9c} $(1,2,2,2,2)$.
    \end{enumerate}

    From the exact sequence 
    \begin{equation} \label{usual ses}
    0 \rightarrow [I_C]_{t-1} \stackrel{\times L}{\longrightarrow} [I_C]_t \rightarrow [I_{Z|H}]_t \rightarrow [M(C)]_{t-1} \stackrel{\times L}{\longrightarrow} [M(C)]_t \rightarrow H^1(\mathcal I_Z(t)) \rightarrow \dots
    \end{equation}
    we know that in order for $\times L$ to fail to have maximal rank, it cannot happen that $[I_{Z|H}]_t =0$ and $h^1(\mathcal I_Z(t)) =0$ for the same $t$. This rules out (\ref{3}), (\ref{4}), (\ref{5}), (\ref{6a}), (\ref{7a}), (\ref{8a}), and (\ref{9a}).

    By UPP, if the $h$-vector is that of a complete intersection, then $Z$ must be a complete intersection (Corollary \ref{cor:HF of CI}). Among the remaining cases, this happens for (\ref{6b}), (\ref{8b}) and (\ref{9b}). The socle for such an Artinian ideal is in the last degree. By Theorem \ref{HU fact}, this forces $C$ to lie on a quadric surface in the cases (\ref{6b}) and (\ref{8b}), and we saw in Corollary \ref{curve on quadric} that this forces $M(C)$ to have WLP. As for (\ref{9b}), this forces $C$ itself to be a complete intersection \cite{strano} since the Hilbert function shows that $C$ does not lie on a quadric surface. 

    We are left with only (\ref{7b}) and (\ref{9c}). As above, the socle occurs only in one degree, namely at the end, and so the $h$-vector forces $C$ to lie on a quadric surface. Thus,  we are done.
\end{proof}

We add to Proposition \ref{at least 10} the following partial result.

\begin{proposition} \label{< 15}
    Apart from the following three possible exceptions, the smallest degree of a smooth, irreducible curve $C$ for which $M(C)$ fails WLP is 15. 

    \begin{itemize}
        \item A curve of degree 10 whose general hyperplane section has $h$-vector $(1,2,3,3,1)$;

        \item A curve of degree 13 whose general hyperplane section has $h$-vector $(1,2,3,4,2,1)$;

        \item A curve of degree 14 whose general hyperplane section has $h$-vector $(1,2,3,4,3,1)$.
    \end{itemize}
\end{proposition}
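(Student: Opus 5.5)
Combined with Proposition \ref{11buchs}, which supplies a smooth irreducible curve of degree $15$ whose module fails WLP, and Proposition \ref{at least 10}, which handles $d \le 9$, it suffices to treat smooth irreducible curves of degree $10 \le d \le 14$. For these I would show that $M(C)$ has WLP unless the general hyperplane section has one of the three listed $h$-vectors. The method is the one used in the proof of Proposition \ref{at least 10}.

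The first step is to list all $h$-vectors of decreasing type (those allowed by UPP) that are not $(1,1,\dots,1)$ and have total sum $d$, for $10 \le d \le 14$. Next I would discard those that end in their maximal value, or more generally those for which no degree $t$ has both $[I_{Z|H}]_t \ne 0$ and $h^1(\mathcal I_Z(t)) \ne 0$. By the exact sequence (\ref{usual ses}), such a vector forces $\times L$ to have maximal rank in every degree. For an $h$-vector $(h_0,\dots,h_e)$ with $h_e>0$, both groups are nonzero in degree $t$ exactly when $h_t < t+1$ and $t<e$. So only vectors with a drop below $t+1$ strictly before the last degree survive.

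The second step handles the survivors that start with $(1,2,\dots)$, meaning $C$ has an element of degree $2$ in the hyperplane section. The idea is to locate the socle degrees of the Artinian reduction, typically at the end or at the drop points. Theorem \ref{HU fact} then forces low-degree elements of $I_{Z|H}$ to lift to $I_C$. In the typical case this puts $C$ on a quadric, and Corollary \ref{curve on quadric} gives WLP. When the vector is a complete-intersection vector, Corollary \ref{cor:HF of CI} makes $Z$ a complete intersection. Then Strano's theorem either makes $C$ a complete intersection, so $M(C)=0$, or places $C$ on a low-degree surface.

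The third step treats $h$-vectors starting $(1,2,3,\dots)$ or $(1,2,3,4,\dots)$, where the section lies on no conic. Here I would use Theorem \ref{HU fact} to show that the forms of $I_{Z|H}$ in degrees below the socle degree lift to $C$. The lifting makes the map $[I_C]_t \to [I_{Z|H}]_t$ surjective, so $[I_{Z|H}]_t \to [M(C)]_{t-1}$ is zero and $\times L$ is injective there. In the remaining degrees $h^1(\mathcal I_Z(t))=0$, which gives surjectivity. Alternatively, when the ideal has a unique generator in the initial degree, Corollary \ref{UPP cor} says the generator is irreducible, and liaison by it reduces $C$ to a smaller case.

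I expect the main obstacle to be the vectors whose Artinian reduction has socle in two different degrees, such as $(1,2,3,3,1)$, $(1,2,3,4,2,1)$ and $(1,2,3,4,3,1)$. There Huneke--Ulrich lifts only up to the lower socle degree, which does not reach the degree where both groups are nonzero. These are precisely the stated exceptions, so the real work is checking that every other vector in the list escapes this situation.
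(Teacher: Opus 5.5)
\textbf{Gap in the final step.} Your plan follows the paper's route: reduce to $10\le d\le 14$, list the decreasing-type $h$-vectors, discard those with no $t<e$ where $h_t\le t$, and then use Theorem \ref{HU fact} to lift $[I_{Z|H}]_t$ for $t\le e-1$. The last step, as you formulate it, has a genuine gap.

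Lifting everything below the least socle degree $a$ only settles the vectors with $a=e$. Your claim that the vectors with socle in two degrees are ``precisely the stated exceptions'' is false. Take $(1,2,3,3,3,1)$ in degree $13$:
\begin{itemize}
\item $Z$ lies on a unique cubic $F$, and $I_{Z|H}$ acquires two new quintic generators $G_1,G_2$.
\item The map $[S]_3\oplus[S]_1^{2}\to[I_{Z|H}]_6$ given by $(F,G_1,G_2)$ has a $16$-dimensional source and a $15$-dimensional target. So there is a minimal syzygy in degree $6$.
\item Hence the general Artinian reduction has socle in degree $4$, as well as in the last degree $5$.
\end{itemize}
Theorem \ref{HU fact} therefore only lifts $[I_{Z|H}]_{\le 3}$. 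But $t=4$ has both $[I_{Z|H}]_4\neq 0$ and $h^1(\mathcal I_Z(4))\neq 0$. Carried out as stated, your check yields a fourth exception and does not prove the proposition.

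Your ``alternatively'' sentence does not repair this. Irreducibility of the plane cubic does not make it lift, and a single surface containing $C$ does not give a link.

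\textbf{The missing idea.} The paper handles this case by noting that ``there is no new generator in degree $4$'': lifting propagates to multiples of lifted generators. Since $h_3=h_4=3$, we have $[I_{Z|H}]_4=F\cdot[S]_1$. Theorem \ref{HU fact} lifts $F$ to some $\tilde F\in[I_C]_3$, and then each $F\ell$ is the restriction of $\tilde F\ell$. So $[I_C]_4\to[I_{Z|H}]_4$ is onto and $\times L\colon[M(C)]_3\to[M(C)]_4$ is injective.

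The correct test is therefore not $a=e$. It is that every minimal generator of $I_{Z|H}$ of degree $\le e-1$ has degree $<a$. With this test:
\begin{itemize}
\item $(1,2,3,3,2)$, $(1,2,3,3,3)$, $(1,2,3,3,3,1)$ and $(1,2,3,3,3,2)$ pass. For $(1,2,3,3,3,2)$, irreducibility of the cubic, via UPP, rules out a degree-$6$ syzygy, so its socle is only in degree $5$.
\item Exactly $(1,2,3,3,1)$, $(1,2,3,4,2,1)$ and $(1,2,3,4,3,1)$ fail.
\end{itemize}
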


\begin{proof}
    We know from Proposition \ref{11buchs} that a smooth curve of degree 15 exists, so we would like to eliminate curves of degree $\leq 14$. 

     In the proof of Proposition \ref{at least 10} we saw that generic $h$-vectors and $h$-vectors of the type $(1,2,2,\dots,2,1)$ or $(1,2,2,\dots,2,2)$ cannot occur. Also, in the same manner as that used to exclude (\ref{9b}), we can eliminate $(1,2,3,3,2,1)$. We are left with the following list (new numbering):

        \renewcommand{\labelenumi}{(\arabic{enumi})  }

    \begin{enumerate} 
\item \label{10} $(1,2,3,3,1)$;
\item \label{11} $(1,2,3,3,2)$;
\item \label{12} $(1,2,3,3,3)$; 
\item \label{13a} $(1,2,3,3,3,1)$;
\item \label{13b} $(1,2,3,4,2,1)$;
\item \label{14a} $(1,2,3,3,3,2)$;
\item \label{14b} $(1,2,3,4,3,1)$.
    \end{enumerate}

Cases (\ref{10}), (\ref{13b}) and (\ref{14b}) are the open cases listed in the statement of the proposition. It remains to  eliminate (\ref{11}), (\ref{12}), (\ref{13a}) and (\ref{14a}). By \Cref{HU fact}, we will first show that all of these force $C$ to lie on a unique cubic surface, because the socle has to start in degree $\geq 4$ and $Z \subset H$ lies on a unique cubic curve.  This is obvious for (\ref{12}), (\ref{13a}) and (\ref{14a}). For (\ref{11}), since $Z$ has UPP it lies in a complete intersection of type $(3,4)$ (see Remark \ref{use irred} for more details), and so it is linked to a single point. This means that $I_Z$ has three minimal generators, namely in degrees 3, 4 and 5. Then the minimal free resolution (over the polynomial ring $S$ of $H$) must be
\[
0 \rightarrow 
\begin{array}{c}
S(-6)^2 
\end{array} 
\rightarrow
\begin{array}{c}
S(-3) \\
\oplus \\
S(-4) \\
\oplus \\
S(-5)
\end{array}
\rightarrow I_Z \rightarrow 0,
\]
and so the socle {of $M(C)$} is concentrated in degree 4, and hence $[I_Z]_{\leq 3}$ lifts to $I_C$. 

Now, for (\ref{11}), (\ref{12}), (\ref{13a}) and (\ref{14a}) we note, using the sequence (\ref{usual ses}), that the lifting of the cubic in $I_{Z|H}$ gives us injectivity for $\times L \colon [M(C)]_{t-1} \rightarrow [M(C)]_t$ in all degrees where $h^1(\mathcal I_Z(t)) \neq 0$. (For (\ref{13a}) and (\ref{14a}) we also use the fact that there is no new generator in degree 4.) Thus WLP holds for these cases.
\end{proof}

\begin{remark} \label{HU not enough}
We would like to rule out (\ref{10}), (\ref{13b}),  and (\ref{14b}) as possible $h$-vectors of the general hyperplane section of a smooth {\it irreducible} curve of degree $< 15$ for which $M(C)$ fails WLP. 

Note  that if we drop the requirement that $C$ be irreducible, the first of the three open possibilities is the $h$-vector corresponding to the curve in Proposition \ref{all but 2} (when $s=10$), which is smooth if the two extra lines are general; hence, that $h$-vector does occur for some smooth curve for which $M(C)$ fails WLP, but this curve is reducible. The question here is whether an irreducible example exists. 

In fact, in the same way, we can obtain smooth reducible examples for (\ref{13b})  and (\ref{14b}). One can check that taking the union of 10 general lines from the same ruling on a smooth quadric surface and 3 general lines gives $\dim [M(C)]_3 = 32$,  $\dim [M(C)]_4 = 31$, $\dim [I_{Z|H}]_4 = 3$, and $ \dim [I_C]_4 = 1$, so we have a failure of maximal rank for (\ref{13b}) using the sequence (\ref{usual ses}). Similarly, taking the union of 10 general lines from the same ruling on a smooth quadric surface and 4 general lines gives an example of a failure of maximal rank for $(\ref{14b})$.
\end{remark}

\begin{remark} \label{use irred}
    We expand on the previous remark. Here we will show how the assumption of irreducibility can make a difference in our analysis.
    
    Consider curves of degree 15. Proposition \ref{11buchs} gives an example where there is a smooth irreducible curve for which $M(C)$ fails to have WLP. The $h$-vector of its general hyperplane section is $(1,2,3,4,4,1)$. As in Remark \ref{HU not enough}, Theorem \ref{HU fact} is inconclusive with this $h$-vector, and in fact such a curve does exist.

    Now consider the $h$-vector $(1,2,3,4,3,2)$ and suppose $C$ is a curve whose general hyperplane section has this $h$-vector. Does Theorem \ref{HU fact} apply? Here the question of irreducibility of $C$ matters! Suppose first that $C$ is irreducible, so its general hyperplane section $Z$ has the UPP. Then by \cite[Remark 1.2]{MR}, there is an irreducible form among the quartic curves containing $Z$, so $Z$ lies in a complete intersection of two quartics, linking $Z$ to  a single point. Then it is clear that the socle of the Artinian reduction of $S/I_Z$ is concentrated in degree~5. In particular, Theorem \ref{HU fact} gives that the quartics lift to surfaces containing $C$, so $C$ is linked to a line and hence is ACM (and clearly has the same $h$-vector as its Artinian reduction).

    On the other hand, if $C$ is not irreducible, we can take $C$ to be the union of 11 general lines from a ruling of a smooth quadric and four general lines in $\mathbb P^3$. Then one can check, as in Remark \ref{HU not enough}, that $M(C)$ has dimension 40 in degrees 3 and 4, but for a general linear form $L$, $\times L$ is not an isomorphism in these degrees. In fact, the rank is not even submaximal: it is 38. One can check that the general hyperplane section of $C$ has the desired $h$-vector. The difference here is that the quartics containing the general hyperplane section all have a conic factor in common.
\end{remark}

   In the category of smooth, irreducible curves,
   a well-studied family 
   is the class of smooth rational curves.
   This family forms a natural first candidate for the genus part of \Cref{Q for curves}.

   \begin{question}
       If $C$ is a smooth, irreducible rational curve, must $M(C)$ have WLP?
   \end{question}

Although we do not have an answer to this question, we can give a negative answer for curves of arithmetic genus zero more generally, and a positive answer for the general rational curve.

\begin{proposition} \label{arithgen0}
    Let $C_1$ be a smooth curve of type $(7,1)$ on a smooth quadric surface, and let $C_2$ and $C_3$ each be a general line meeting $C_1$ in a point. (In particular, neither $C_2$ nor $C_3$ lies on the quadric containing $C_1$.) Let $C = C_1 \cup C_2 \cup C_3$. Then $C$ has arithmetic genus 0 and degree 10, and $M(C)$ fails to have WLP.
\end{proposition}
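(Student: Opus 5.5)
The plan is to mimic the argument of Proposition \ref{all but 2}. First I will compute $\dim [M(C)]_t$ in low degrees using the sequence (\ref{usual 1}). Then I will use (\ref{usual 3}) with $m=1$ to see that $\times L$ fails to be an isomorphism between two equal-dimensional components.

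\emph{Degree and genus.} $C_1$ is a smooth rational curve of degree $8$, since a curve of type $(a,b)$ on a smooth quadric has genus $(a-1)(b-1)$. For general lines $C_2,C_3$ meeting $C_1$ at distinct points, $C_2$ and $C_3$ are skew, and $C$ is a nodal tree of three $\PP^1$'s. Hence $\deg C = 10$ and $p_a(C) = 0+0+0+2-2 = 0$. Because every component is rational and the dual graph is a tree, $h^1(\mathcal O_C(t))=0$ for $t \ge 0$. Therefore $h^0(\mathcal O_C(t)) = 10t+1$ for $t\geq 0$.

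\emph{The ideal in low degrees.} A form of degree $t$ containing $C_1$ but not divisible by the quadric form $Q$ restricts to a divisor of type $(t,t)$ on the quadric containing $C_1$. The residual would then have type $(t-7,t-1)$, which is impossible for $t\le 6$. So $[I_{C_1}]_t = Q\cdot R_{t-2}$ for $t\le 6$. Since $C_2,C_3$ do not lie on the quadric, $QF$ contains them only if $F$ does, and we get $[I_C]_t = Q\,[I_{C_2\cup C_3}]_{t-2}$ for $t \le 6$. Two skew lines lie on no plane, so $[I_C]_2=[I_C]_3=0$. Plugging into (\ref{usual 1}) gives
\[
\dim [M(C)]_2 = 21-10 = 11, \qquad \dim [M(C)]_3 = 31-20 = 11 .
\]
For completeness I would also note $\dim[M(C)]_1=7$ and $\dim[M(C)]_4 = 41-35+4=10$, using $\dim[I_{C_2\cup C_3}]_2 = 4$.

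\emph{Failure of maximal rank.} In (\ref{usual 3}) with $m=1$ and $t=3$, we have $[I_C]_2=[I_C]_3=0$. Exactness then gives that the kernel of $\times L\colon [M(C)]_2\to [M(C)]_3$ is isomorphic to $[I_{Z_1|H}]_3$. Since source and target both have dimension $11$, maximal rank would mean injectivity. But $Z_1 = C\cap H$ consists of $8$ points on the conic $Q\cap H$ together with the two points $C_2\cap H$ and $C_3\cap H$. The product of that conic with the line through the two extra points is a nonzero cubic in $[I_{Z_1|H}]_3$. Hence the map has a kernel and WLP fails from degree $2$ to degree $3$.

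The only step needing care is the description of $[I_C]_t$ in degrees $2$ and $3$, namely that no cubic contains $C$. This rests on the bidegree computation on the quadric and on $C_2\cup C_3$ spanning $\PP^3$. Everything else is the dimension count above.
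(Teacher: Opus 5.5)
Your proposal is correct and follows essentially the same route as the paper's proof. You use \eqref{usual 1} to get $\dim[M(C)]_2=\dim[M(C)]_3=11$, then \eqref{usual 3} with $m=1$: $C$ lies on no cubic surface, while $C\cap H$ lies on the conic $Q\cap H$ times the line through the two extra points. You also fill in details the paper leaves implicit, namely the bidegree argument giving $[I_C]_t=Q\,[I_{C_2\cup C_3}]_{t-2}$ for $t\le 6$, and the vanishing $h^1(\mathcal O_C(t))=0$ for $t\ge 0$ that yields $h^0(\mathcal O_C(t))=10t+1$.
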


\begin{proof}
    Certainly all three components have genus 0. Then the genus of $C_1 \cup C_2$ is $0 + 0 -1 + 1 = 0$, and similarly for $C$. The degree is obvious. The failure of $\times L \colon [M(C)]_2 \rightarrow [M(C)]_3$ to be injective is identical to the situation in Proposition \ref{all but 2}, using (\ref{usual 3}); the key point is that $C$ does not lie on a cubic surface, but its general hyperplane section lies on a cubic curve in the plane.
    
    It remains to check that we expect injectivity from degree 2 to degree 3. We use (\ref{usual 1}) to compute the dimensions:
    \[
\dim [M(C)]_t = \left \{
\begin{array}{ll}
0 & \hbox{if } t=0; \\
7 & \hbox{if } t=1; \\
11 & \hbox{if } t=2; \\
11 & \hbox{if } t=3.
\end{array}
\right.
    \]
    (We suppress the computation for $t \geq 4$ since we do not need it.)
\end{proof}

Passing to smooth rational curves, we now show that a general rational curve $C$ of degree $d$ has $M(C)$ satisfying WLP, even in $\PP^n$.

\begin{proposition}\label{prop:rational curve}
    If $C$ is a general rational curve in $\PP^n$ of degree $d \geq n$ then $M(C)$ has WLP.
\end{proposition}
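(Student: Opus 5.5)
The plan is to reduce WLP for $M(C)$ to two inputs: the maximal rank property of a general rational curve, and a generic Hilbert function statement for its general hyperplane section. Both then feed into the exact sequence (\ref{usual 3}) with $m=1$.

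\textbf{Step 1: where $M(C)$ lives.} Since $C\cong\mathbb P^1$ is smooth of degree $d$, we have $H^0(\mathcal O_C(t))=0$ for $t<0$ and $h^0(\mathcal O_C(t))=td+1$ for $t\ge 0$. The map $[R]_0\to H^0(\mathcal O_C)$ is onto, so (\ref{usual 1}) gives $[M(C)]_t=0$ for $t\le 0$. For $t\ge 1$, $[M(C)]_t$ is the cokernel of the restriction $\rho_t\colon [R]_t\to H^0(\mathcal O_C(t))$.

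By the maximal rank theorem for general rational curves (Ballico for $\PP^3$, Larson in general), each $\rho_t$ has maximal rank. Let $t_0$ be the largest $t$ with $\binom{t+n}{n}< td+1$. Then:
\begin{itemize}
\item for $1\le t\le t_0$, $[I_C]_t=0$ and $\dim[M(C)]_t=td+1-\binom{t+n}{n}$;
\item for $t>t_0$, $[M(C)]_t=0$.
\end{itemize}
Here we use that $td+1-\binom{t+n}{n}$ is positive on an interval starting at $t=1$, because $d\ge n$. In particular, multiplication by $L$ into degree $t\ge t_0+1$ is trivially surjective. So only the maps $[M(C)]_{t-1}\to[M(C)]_t$ with $2\le t\le t_0$ need checking, and there $[I_C]_{t-1}=[I_C]_t=0$.

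\textbf{Step 2: reduction to the hyperplane section.} Let $Z=C\cap H$, which consists of $d$ points of $H\cong\PP^{n-1}$. In the range $2\le t\le t_0$, sequence (\ref{usual 3}) becomes
\[
0\to [I_{Z|H}]_t\to [M(C)]_{t-1}\xrightarrow{\times L}[M(C)]_t\to H^1(\mathcal I_{Z}(t))\to\cdots,
\]
using $H^1(\mathcal I_{Z}(t))\cong H^1(\mathcal I_{Z|H}(t))$ as in Section~2. Hence $\times L$ has maximal rank in degree $t$ as soon as $[I_{Z|H}]_t$ and $H^1(\mathcal I_{Z|H}(t))$ are not both nonzero. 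Note also that $\dim[M(C)]_t-\dim[M(C)]_{t-1}=d-\binom{t+n-1}{n-1}$, which is consistent with the expected sign. So it suffices that $Z$ has generic Hilbert function in $\PP^{n-1}$, i.e.
\[
h_Z(t)=\min\left\{d,\binom{t+n-1}{n-1}\right\},
\]
in which case the product $h^0\cdot h^1$ vanishes in every degree.

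\textbf{Step 3, the main obstacle: generic Hilbert function of $Z$.} I would prove that the general hyperplane section of a general rational curve of degree $d\ge n$ is a general set of $d$ points of $H$, by showing that the incidence correspondence is dominant. The correspondence pairs a degree $d$ rational curve with its intersection with a fixed hyperplane $H$, and maps to $\mathrm{Sym}^d H$.

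For dominance, I would first try an explicit construction. Given $d$ general points $P_1,\dots,P_d\in H$, choose a rational normal curve of degree $n$ in $\PP^n$ through $P_1,\dots,P_n$ meeting $H$ only there; this is possible since a rational normal curve can be chosen through any $n+3$ general points. Then attach $d-n$ general lines, each through one of $P_{n+1},\dots,P_d$ and meeting the curve. The result is a connected nodal tree-like curve of arithmetic genus $0$ and degree $d$. It deforms, with its points on $H$ moving freely, to a smooth rational curve, by standard smoothing of nodal rational trees whose components have unobstructed normal bundles.

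Dominance then follows from semicontinuity and openness: a general set of points has generic Hilbert function, so the hyperplane section of the general curve does too. An alternative route, if the smoothing step is delicate, is a dimension count. This would show that the differential of $C\mapsto C\cap H$ is surjective, using $H^1(N_C(-1))=0$ for a general rational curve, since $N_C$ is balanced with splitting degrees $\ge d+2$. Either way, combining this with Steps 1 and 2 gives WLP.
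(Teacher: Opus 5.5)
Your proof is correct, and its core is the same as the paper's. Once $Z=C\cap H$ has generic Hilbert function in $H$, the sequence (\ref{usual 3}) with $m=1$ forces $\times L$ to have maximal rank. The differences lie in the supporting inputs.

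Your Step~1 is unnecessary. In (\ref{usual 3}), $[I_{Z|H}]_t=0$ already gives injectivity and $H^1(\mathcal I_Z(t))=0$ already gives surjectivity, whatever $[I_C]_{t-1}$ and $[I_C]_t$ are. So the maximal rank theorem for rational curves, a deep result, can be dropped, and the paper never uses it. As written, Step~1 also tacitly needs $d>n$, since $td+1-\binom{t+n}{n}$ vanishes at $t=1$ when $d=n$. That case is trivial, because $C$ is then a rational normal curve and $M(C)=0$.

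For the genericity of the hyperplane section, the paper takes $Z$ general in $H$ and invokes \cite[Theorem 1.6]{BM}, which gives a smooth rational curve whose hyperplane section is exactly $Z$. It then passes to general $L$ and general $C$ by semicontinuity. You instead prove what is needed, in two ways.

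\begin{itemize}
\item Your comb degeneration does not give dominance, and it does not need to. All you use is that the comb (the rational normal curve through $P_1,\dots,P_n$ plus the $d-n$ lines) is a flat limit of smooth rational curves of degree $d$, which is standard smoothing. Lower semicontinuity of the Hilbert function of the section, degree by degree, then gives the generic Hilbert function for the general curve. So drop the phrases about the points ``moving freely'' and ``dominance follows from semicontinuity''; semicontinuity yields the open condition directly, not dominance.
\item Your normal-bundle route does give dominance, essentially recovering the cited result, and it needs less than you claim. $H^1(N_C(-1))=0$ holds for every smooth rational curve: $N_C(-1)$ is a quotient of the globally generated bundle $T_{\PP^n}(-1)|_C$, so all its splitting degrees are nonnegative. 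Balancedness of $N_C$ is therefore not needed.
\end{itemize}

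What your version buys is a self-contained argument, modulo standard deformation theory. It also makes explicit which open condition is being propagated: the generic Hilbert function of $C\cap H$ on the irreducible space of pairs $(C,H)$. The paper's proof is shorter but leans on \cite{BM} and on a compressed ``again by semicontinuity'' step. That step is really semicontinuity of this same hyperplane-section condition, not of WLP itself, since $\dim[M(C)]_t$ can a priori jump in families.
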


\begin{proof}
    Note that it makes sense to talk about a general rational curve of degree $d$ since the parameter space is irreducible, {cf.\ \cite[Section 0.4]{Fulton}}. Also, if $d=n$ or $d=n+1$, $M(C)$ has dimension either $0$ or $1$, so the result is trivial.

    Let $H = \PP^{n-1}$ be a hyperplane in $\PP^n$ and let $Z$ be a general set of $d$ points on $H$. By \cite[Theorem 1.6]{BM}, there is a smooth rational curve $C$ in $\PP^n$ whose hyperplane section is $Z$. By generality, for any $t$, either $h^0(\mathcal I_{C|H}(t)) = 0$ or $h^1(\mathcal I_C(t)) = 0$. If $L$ is a linear form defining $H$, then as in Proposition \ref{Prop:GenlWLP} we know that $\times L \colon M(C)_{t-1} \rightarrow M(C)_t$ has maximal rank. By semicontinuity, then, the same is true for a general $L$. Thus for this specific rational curve, $M(C)$ has WLP. But then, again by semicontinuity, the same is true for a general such curve.
\end{proof}

%%%%%%%%%%%%%%%%%%%%%%%%%%%%%%%%%%%%%%%%%%%%%%%%%%%%%%%%%%%%%%%%%%%%%%%

\section{Open questions}

{In this section, we collect the main open questions that arise naturally from the results presented in this paper. While several aspects of the picture are now clearer, a number of cases remain only partially understood. We therefore conclude by formulating the problems that, in our view, deserve further investigation, together with related conjectures.\hfill\break 

In this paper and in the discussion below, we focus on $\PP^3$. However, the problems are meaningful in $\PP^n$ for any $n \ge 3$, and we expect analogous answers.

\textbf{Questions on  unions of lines.} In \Cref{sec: general lines} we considered the case in which $C\subset \PP^3$ is the union of $r$ general lines. We proved that the Hartshorne-Rao module $M(C)$ satisfies SLP in range $\leq 3$ (see \Cref{Prop:GenlWLP}, \Cref{SLP2}, and \Cref{SLP3}).  We believe that this should hold in any range.

\begin{conjecture}\label{conj:gen lines}
    Let $C\subset\PP^3$ be the union of $r$ general lines. Then $M(C)$ has SLP.
\end{conjecture}

There are two main challenges in generalizing our method to higher degrees. In the proof of \Cref{Z2} and \Cref{Z3}, a fundamental step is the specialization of lines into the hyperplane $H=V(L)$. In the cases of $L^2$ and $L^3$, the numerology allowed us to control the socle degree of the specialized scheme. For higher powers of $L$, however, the numerical constraints become more difficult, and the behavior of the socle degree is no longer clear.

A second point is the technical result \Cref{GenFlatFat}. To use the same methodology, the result needs to be extended to higher $m$, which is an interesting problem in its own right. We conjecture the general behavior for $m\geq 4$.

\begin{conjecture}\label{conj:mflatfat}
    Let $Z_{s,m}$ be a general union of $s$ flat fat points of multiplicity $m$ in $\mathbb P^2$. Assume $m \geq 4$ and $s \geq 2m-3$. 
    
    Then $Z_{s,m}$ has generic Hilbert function, i.e.
    \[    h_{Z_{s,m}}(j) = \min \left \{ ms, \binom{j+2}{2} \right \}.
    \]
\end{conjecture}

It is easy to see that for $s\leq 2m-4$, \Cref{conj:mflatfat} fails. Indeed, $Z_{s,m}$ lies on the union of $s$ lines, while a simple computation shows that the predicted Hilbert function of ${Z_{s,m}}$ in degree $s$ is equal to $\binom{s+2}{2}$.

In \Cref{sec4}, we showed how specializing some of the lines of $C$ can change the Lefschetz property of $M(C)$. Specifically, WLP holds when all lines are on a smooth quadric (\Cref{prop:lines quadric}), and it may fail if a few are not (\Cref{all but 2}). A natural question is whether \Cref{prop:lines quadric} extends to SLP. 

\begin{question}
    Let $C\subset Q\subset \PP^3$, where $Q$ is a smooth quadric surface, and $C$ is a union of skew lines in $Q$. Does $M(C)$ have SLP?
\end{question}

As in \Cref{curve on quadric}, a positive answer would also imply that the Hartshorne-Rao module of any curve on a smooth quadric has SLP. 

It is also natural to ask whether allowing one line outside the quadric still preserves SLP, as it does for WLP (\Cref{all but one}), or whether, analogously to the behavior of WLP with two lines outside the quadric, SLP would fail.\hfill\break

\textbf{Questions on smooth irreducible curves.} 

In \Cref{at least 10}, we saw that the smallest possible degree of a smooth irreducible curve $C$ whose Hartshorne-Rao module fails WLP is $10$, and that such examples exist in degree $15$ (Proposition \ref{11buchs}). In particular, \Cref{< 15} rules out all curves in this interval apart from three cases. It is therefore natural to ask whether the Weak Lefschetz Property holds in these remaining cases. 

\begin{question}
Let $C \subset \PP^3$ be a smooth irreducible curve whose general hyperplane section  has one of the following $h$-vectors 
\[
(1,2,3,3,1),\quad (1,2,3,4,2,1),\quad (1,2,3,4,3,1).
\]
Does the Hartshorne-Rao module $M(C)$ satisfy WLP?
\end{question}
These $h$-vectors correspond to curves whose general plane sections are sets of points failing to impose independent conditions on plane curves {of suitable degrees}. 

We briefly examine the case $(1,2,3,3,1)$ as an illustrative example. The general plane section $Z$ of $C$ is $10$ points lying on a unique plane cubic. Notice that, by \eqref{usual ses}, failure of WLP can only happen in the map $$[M(C)]_2\to [M(C)]_3.$$ In particular, $M(C)$ has WLP if and only if the plane cubic containing $Z$ lifts to a cubic surface containing $C$. 

Notice that \Cref{HU fact} does not apply to this case. In fact, this is a sharp case of the classical Laudal’s Lemma: if $\deg C$ were $\geq 11$ instead of $10$, lifting would be guaranteed. 

In Section 5, we showed that, generically, the Hartshorne-Rao module of a nondegenerate rational curve $C\subset \PP^n$ has WLP. { \Cref{arithgen0} provides a cautionary example; nevertheless, we ask:

\begin{question} \label{Qratl}
    If $C$ is an arbitrary smooth rational curve in $\mathbb P^3$, does $M(C)$ have WLP?
\end{question}
}

If Question \ref{Qratl} has a negative answer, it would be desirable to identify the geometric conditions on a smooth rational curve $C$ in $\mathbb P^3$ that ensure that $M(C)$ has WLP. Since every such curve arises as a projection of a rational normal curve in a larger projective space, this leads to the following natural question. 

\begin{question}
    Let $\pi\colon \PP^m\to \PP^n$ be a projection map, and let $C\subset \PP^m$ be smooth and nondegenerate. Assume $M(C)$ has WLP. Under what conditions does $M(\pi(C))$ still have WLP?
\end{question}

}

\section*{ Statements and Declarations}

\noindent{\bf Competing interests:} The authors have no  potential conflicts 
of interest (financial or non-financial) to declare that are relevant to the content of this article.

\end{document}